\newtheorem{thm}{Theorem}
\newtheorem{lem}[thm]{Lemma}
\newtheorem{OP}{Open Problem}
\theoremstyle{remark}
\newtheorem{rem}{Remark}
\theoremstyle{definition}
\newtheorem{dfn}[thm]{Definition}
\newcommand{\norm}[2]{\left\|\left.{#1}\right|{#2}\right\|}
\newcommand{\R}{\mathbb{R}}
\newcommand{\N}{\mathbb{N}}
\newcommand{\E}{\mathbb E}
\newcommand{\var}{\mathrm{var}\,}
\newcommand{\e}{\mathrm e}
\renewcommand{\P}{\mathbb P}
\begin{document}

\title{Path regularity of Brownian motion and Brownian sheet}

\author{H. Kempka\footnote{Ernst-Abbe-Hochschule Jena, University of Applied Sciences, FB Grundlagenwissenschaften, Carl-Zeiss-Promenade 2, 07745 Jena, Germany.
Email: \href{mailto:henning.kempka@eah-jena.de}{henning.kempka@eah-jena.de}
},
\quad C. Schneider\footnote{Friedrich-Alexander University Erlangen-Nuremberg, Applied Mathematics III, Cauerstr. 11, 91058 Erlangen, Germany. Email: \href{mailto:cornelia.schneider@math.fau.de}{cornelia.schneider@math.fau.de}. 
The work of this author has been supported by Deutsche Forschungsgemeinschaft (DFG), Grant No. SCHN 1509/1-2.}, 
\quad and J. Vybiral\footnote{Department of Mathematics, Faculty of Nuclear Sciences and Physical Engineering,
Czech Technical University, Trojanova 13, 12000 Praha, Czech Republic. Email: \href{mailto:jan.vybiral@fjfi.cvut.cz}{jan.vybiral@fjfi.cvut.cz}.
The work of this author has been supported by the European Regional Development Fund-Project ``Center for Advanced Applied Science''
(No. CZ.02.1.01/0.0/0.0/16\_019/0000778) and in part also by the Danube Region Grant no. 8X2043 of the Czech Ministry of Education, Youth and Sports.}}

\maketitle
\date{}

\begin{abstract}

By the work of P. L\'evy, the sample paths of the Brownian motion are known to satisfy a certain H\"older regularity condition almost surely. This was later improved by Ciesielski, who studied the regularity of these paths in Besov and Besov-Orlicz spaces. We review these results and propose new function spaces of Besov type, strictly smaller than those of Ciesielski and L\'evy, where the sample paths of the Brownian motion lie in almost surely. In the same spirit, we review and extend the work of Kamont, who investigated the same question for the multivariate Brownian sheet and function spaces of dominating mixed smoothness.\\

\noindent{\em Key Words:} Brownian motion, Brownian sheet, Besov spaces, Faber bases, path regularity, dominating mixed smoothness\\
{\em MSC2020 Math Subject Classifications:} 60G17, 60G15, 46E35, 60G60.
\end{abstract}

\tableofcontents 

\section{Introduction}\label{sec:Into}

Already in 1937, Paul L\'evy showed \cite[Section 52]{Levy}, that the sample paths of the Wiener process $W=(W_t)_{t\ge 0}$ satisfy almost surely the H\"older condition
\begin{equation}\label{eq:Levy_orig}
|W_{t'}-W_{t}|\le c\cdot \sqrt{2|t'-t|\log\left(\frac{1}{|t'-t|}\right)}
\end{equation}
for every $c>1$ and $|t'-t|$ small enough. In general, one can define for a positive function $g$ on $[0,1]$ the H\"older space ${\mathcal C}^g([0,1])$ as the collection of all functions $f$ on $[0,1]$ such that
\begin{equation*}
|f(s)-f(t)|\le c\, g(|s-t|)\quad \text{for all}\quad 0\le s,t\le 1.
\end{equation*}
Then the result of L\'evy shows that the paths of $W$ lie almost surely in the H\"older space ${\mathcal C}^g([0,1])$ with $g(r)=|r\log r|^{1/2}$ for  $r>0$ small. Furthermore, this result is known to be optimal and this space is the smallest one in the scale of H\"older spaces, where the paths of $W$ lie in almost surely \cite{CR81,Levy2}. This shows, in particular, that the $\log$-factor in \eqref{eq:Levy_orig} is necessary and that the smoothness regularity $1/2$, which is $\mathcal{C}^{1/2}([0,1])$, i.e., the space above with $g(r)=|r|^{1/2}$, cannot be achieved in the scale of H\"older spaces.

Later on, Ciesielski proved in \cite{Ciesiel91}, that one can actually obtain smoothness of the order $1/2$ if one gives up slightly on the integrability. Namely, \cite{Ciesiel91} shows that the paths of $W$ lie almost surely in the Besov space $B^{1/2}_{p,\infty}([0,1])$ for $1\le p<\infty$. The excluded endpoint space is again $B^{1/2}_{\infty,\infty}([0,1])=\mathcal{C}^{1/2}([0,1])$. Shortly after, Ciesielski and his co-authors \cite{Ciesiel93,CKR93} refined the analysis of \cite{Ciesiel91} and discovered, that almost all paths of $W$ lie in the Besov-Orlicz space $B^{1/2}_{\Phi_2,\infty}([0,1])$, which combines the technique of Besov spaces together with the Orlicz space generated by the Orlicz function $\Phi_2(t)=\exp(t^2)-1$. This space is (properly) included both in the H\"older space ${\mathcal C}^g([0,1])$ discovered by L\'evy as well as in the Besov spaces $B^{1/2}_{p,\infty}([0,1])$ for $1\le p<\infty.$ As such, $B^{1/2}_{\Phi_2,\infty}([0,1])$ represents nowadays the smallest space, where the sample paths of the Wiener process are known to lie in almost surely. On the other hand, its definition is certainly more involved than the H\"older condition of L\'evy \eqref{eq:Levy_orig}.

The results on the regularity of sample Wiener paths were later complemented, generalized, and applied in several different ways. The optimality of the result of Ciesielski in the scale of Besov spaces was studied in \cite{R93}
and \cite{BO11}, {where the latter reference  studies} the topic in the frame of modulation spaces and Wiener amalgam spaces. {Path regularity of more general processes was investigated in \cite{Herren, Schilling1, Schilling3} and we refer to \cite{Veraar} for results on the torus.
The approach was also generalized to Wiener processes with values in Banach spaces in \cite{HV08}, and applied to regularity properties of stochastic differential equations
\cite{OSK18,OV20}.}

{Let us also mention that many other properties of sample paths of the Wiener process, Brownian sheet, and other random processes were studied extensively. They include different dimensions of the graph set, small ball probabilities, hitting probabilities or the law of iterated logarithm. We refer in this context to \cite{Khos, KhosXiao, MP, TudorXiao, Wang07, Xiao1} and the references given therein.}

The first aim of our work is to present an essentially self-contained proof of the results of L\'evy and Ciesielski, which should be easily accessible to readers familiar with the theory of function spaces. This will be done by first deriving  the decomposition of sample Wiener paths into a series of Faber splines with independent standard Gaussian random coefficients.
 {Afterwards, the proof that almost all paths of the Wiener process lie in a H\"older space of L\'evy or in the Besov or Besov-Orlicz spaces of Ciesielski,  reduces to  rather straightforward concentration inequalities for independent Gaussian variables. For this purpose we collect basic facts  on Gaussian variables (and some other related random variables), that are needed throughout the manuscript, in Section \ref{sec:app}.}

Let us briefly summarize the main steps of this approach. It is essentially based on two very well-known properties of the Faber system. This is a system of shifted and dilated hat functions $v_{j,m}$, cf. \eqref{eq:vjm}, where $j\in\N_0$ and $0\le m\le 2^{j}-1$, which are concentrated on the dyadic intervals $I_{j,m}=[m\cdot 2^{-j},(m+1)2^{-j}].$ The first property of this system is described in detail in Theorem \ref{thm:LevyDecomp}. It states, that if $\{\xi_{j,m}:\ j\in\N_0, \  0\le m\le 2^j-1\}$ are independent standard Gaussian variables, then the series
\[
\sum_{j=0}^\infty \sum_{m=0}^{2^j-1} 2^{-(j+2)/2}\xi_{j,m}v_{j,m}(t)\quad\text{ for } t\in[0,1]
\]
converges almost surely uniformly on $[0,1]$ and its limit coincides with the Wiener process $W=(W_t)_{t\ge 0}$.

The second key property of the Faber system is that it can be used to describe several classical function spaces of Besov-type. To be more precise, a function $f$ representable (in some sense) by the series
\begin{equation*}
f=\sum_{j=0}^\infty \sum_{m=0}^{2^j-1}\mu_{j,m}2^{-js}v_{j,m}
\end{equation*}
belongs to such a Besov-type space if, and only if, the sequence of coefficients $\{\mu_{j,m}\}_{j,m}$ satisfies some summability and/or integrability condition. Naturally, these conditions differ from one space to another, but usually they can be rewritten in the language of the step functions
\begin{equation}\label{eq:Intro01}
f_j=\sum_{m=0}^{2^j-1}\mu_{j,m}\chi_{j,m},
\end{equation}
where $\chi_{j,m}$ is the characteristic function of $I_{j,m}.$ For example, the proof that Wiener paths lie almost surely in the Besov-Orlicz space $B^{1/2}_{\Phi_2,\infty}([0,1])$, reduces by this technique  to the statement, that $\|f_j\|_{\Phi_2}$ is finite and uniformly bounded over $j\in\N_0$ if we replace the $\mu_{j,m}$'s in \eqref{eq:Intro01} by independent standard Gaussian variables $\xi_{j,m}.$ We use this approach to re-prove the results of L\'evy and Ciesielski and, in the named case of the Besov-Orlicz space $B^{1/2}_{\Phi_2,\infty}([0,1])$, we provide an alternative proof, based on a characterization of the Orlicz space $L_{\Phi_2}([0,1])$ in terms of non-increasing rearrangements.

The second aim of this paper is to show that this procedure can be stepped up and that  {(based on some knowledge about  Gaussian variables)} one can produce even smaller function spaces, which still contain the sample Wiener paths almost surely. The price to pay in this context is that the new spaces do not fall into any standard scale of function spaces. Let us again briefly sketch the main idea and the main results.
First, we observe that if we use independent Gaussian variables as the coefficients in \eqref{eq:Intro01}, then the Orlicz space $L_{\Phi_2}([0,1])$ measures very effectively the size of the $f_j$'s among the function spaces invariant with respect to the rearrangement of a function. But it does not take  any effort to describe the position of large values of $f_j$. Indeed, if $\xi_j=(\xi_{j,0},\dots,\xi_{j,2^j-1})$ are independent Gaussian variables, then the maximum of $|\xi_{j,m}|$ over $m$ is known to behave asymptotically like $\sqrt{j}$ with high probability. But these large values are unlikely to appear close to each other in $\xi_j$. Therefore, we expect that the averages of randomly constructed $f_j$ would be of much smaller size than the $f_j$'s themselves. This is indeed the case, as is shown in Theorem \ref{thm:Aepsilon}, where we prove that $\|A_k f_j\|_{\Phi_2}$ behaves (up to a polynomial factor) as $2^{(k-j)/2}$ for $0\le k \le j$. Here, $A_k g$ is the average of a function $g$ over the dyadic intervals $I_{k,l}$, cf. \eqref{eq:AV1}. In Theorem \ref{thm:13} and Theorem \ref{thm:15} we provide  three more function spaces of this kind, including certain function spaces based on  some sort of ball means of differences.

We study also the generalization of the previous approach to the multivariate setting. The high-dimensional analogue of the Wiener process is known as Brownian sheet, cf. Definition \ref{dfn:BrownianSheet}. For the sake of brevity, we restrict ourselves to $d=2$ and the Brownian sheet defined on the unit square $[0,1]^2$ of $\R^2$, but higher dimensions could be treated in the same way with only minor modifications. The known results in this area go essentially back to the work of Anna Kamont \cite{Kamont2, Kamont1} (whose Ph.D. supervisor was Zbigniew Ciesielski).

Similarly to the one-dimensional case, one first obtains a decomposition of the paths of the Brownian sheet in a suitable basis, the so-called multivariate Faber system. This is nothing else than the tensor products of the hat functions of the one-dimensional Faber system. The coefficients in this decomposition are again independent standard Gaussian variables. The corresponding function spaces, the spaces of dominating mixed smoothness, are very well-known in the field of approximation theory of functions of several variables. Unfortunately, in \cite{Kamont2} Kamont called these spaces \emph{anisotropic H\"older classes}, which might explain why her work went essentially unnoticed by the community of researchers investigating function spaces of dominating mixed smoothness.

Also in this part we re-prove the known results in a way which we hope will be easily accessible for readers with a background in the theory of function spaces. Again, we employ a number of different scales of function spaces to describe the path regularity of the Brownian sheet. These include Besov and Besov-Orlicz spaces of dominating mixed smoothness, as well as Besov spaces of logarithmic dominating mixed smoothness (which surprisingly differ from  those  introduced by Triebel \cite{Triebel10}). Finally, we also propose new function spaces, which are strictly smaller than the best known spaces so far, where the paths of the Brownian sheet lie in almost surely. 

The structure of the paper is as follows. Section \ref{sec:d1} treats the univariate Wiener process. We first present   L\'evy's decomposition of its paths into the Faber system (Theorem \ref{thm:LevyDecomp}). Then (in Section \ref{sec:Faber}) we review the necessary notation from the area of function spaces. In Section \ref{sub:2.3} we merge these two subjects and re-prove the results of L\'evy and Ciesielski, giving an alternative proof for the Besov-Orlicz space $B^{1/2}_{\Phi_2,\infty}([0,1])$ in Section \ref{sub:2.4}. The new function spaces, where the paths of the Wiener process lie in almost surely, are then investigated in Section \ref{sec:2.5}. Section \ref{sec:3} studies the regularity of Brownian sheets and follows essentially the same pattern. After reviewing the necessary tools in the  multivariate setting (L\'evy's decomposition, multivariate Faber systems, function spaces of dominating mixed smoothness) we re-prove the results of \cite{Kamont2} and sketch the new function spaces, in which one can find almost all paths of the Brownian sheet. Finally, to make the exposition self-contained, Section \ref{sec:app} collects basic properties of random variables (including Gaussian variables, their absolute values and the integrated absolute Wiener process). We also collect some facts about Orlicz spaces.

\section{Regularity of Brownian paths}\label{sec:d1}

In this section we discuss the regularity of the sample paths of the classical Wiener process. Our approach is based on the decomposition, which can be traced back to L\'evy \cite{Levy}. Essentially, it gives a decomposition of the Wiener paths into the Faber system of shifted and dilated hat functions, with the coefficients given by independent Gaussian variables. This, together with characterizations of various function spaces in terms of the Faber system, will allow us to re-prove the classical results of L\'evy and Ciesielski, as well as to define new function spaces, where the sample paths of the Wiener process lie in almost surely.

In our work (as it is common in the literature) the notions of  Wiener process and  Brownian motion are used as synonyms, which both refer to the following definition.
\begin{dfn}\label{def:BM} A real-valued random process $W=(W_t)_{t\ge 0}$ is called  Wiener process (or Brownian motion) if it satisfies
\begin{enumerate}
\item[1)] $W_0=0$;
\item[2)] $W$ has almost surely continuous paths, i.e., $W_t$ is almost surely continuous in $t$;
\item[3)] $W$ has independent increments, i.e., if $0\le t_0<t_1<\dots<t_n$, then $W_{t_n}-W_{t_{n-1}}, W_{t_{n-1}}-W_{t_{n-2}},\dots,W_{t_1}-W_{t_0}$ are independent {random} variables;
\item[4)] $W$ has Gaussian increments, i.e., $W_t-W_s\sim {\mathcal N}(0,t-s)$ for $0\le s\le t.$
\end{enumerate}
\end{dfn}

Let the random variables $(W_t)_{t\ge 0}$ be defined on the common probability space $(\Omega, {\mathcal F},{\mathbb P})$. Then, for every $\omega\in\Omega$ fixed, we call the mapping $t\to W_t(\omega)$ a Brownian path (or Wiener path).

\subsection{L\'evy's decomposition of Brownian paths}\label{subsec:21}

We now present L\'evy's representation of Brownian motion,
which is essentially a dyadic decomposition of the paths of Brownian motion into a series of piecewise linear functions with random coefficients.
Although much of this idea applies to general continuous functions on any closed interval,
we restrict ourselves to $(W_t)_{t\in I}$, where $I=[0,1]$.

For every $j\in\N_0$ we construct a (random) continuous function $W_j(t)$, which
is piecewise linear on all dyadic intervals
\begin{equation*}%\label{eq:def_Ijm}
I_{j,m}=\left[\frac{m}{2^j},\frac{m+1}{2^j}\right],\quad m\in\{0,\dots,2^j-1\}
\end{equation*}
and which coincides with a given path $W_t$ at their endpoints
\[
t_{j,m}=\frac{m}{2^j}, \quad m\in\{0,\dots,2^j\}.
\]

For $j=0$, we put $W_0(t)=W_1\cdot t$ and observe that $W_0(0)=W_0=0$ and $W_0(1)=W_1$, i.e.,
$W_0(t)$ coincides with $W_t$ for $t=t_{0,0}=0$ and $t=t_{0,1}=1.$

For $j=1$, we are looking for a continuous function $W_1(t)$, which would coincide with $W_t$
not only in $t_{1,0}=t_{0,0}=0$ and $t_{1,2}=t_{0,1}=1$, but also in $t_{1,1}=1/2.$
For this sake, we add to $W_0(t)$ a suitable multiple of a continuous function $v(t)$,
which vanishes at $t=0$ and $t=1$ and is linear on $I_{1,0}=[0,1/2]$ as well as on $I_{1,1}=[1/2,1]$.
Therefore, $v(t)$ is the usual hat function supported in $I$, i.e.,
\begin{equation*}%\label{eq:vjm1}
v(t)=\begin{cases} 2t\quad &\text{if}\ 0\le t<\frac{1}{2},\\
2(1-t)\quad &\text{if}\ \frac{1}{2}\le t<1,\\
0&\text{otherwise}
\end{cases}
\end{equation*}
and we put
\[
W_1(t)=W_0(t)+(W_{1/2}-W_0(1/2))v(t),\quad t\in[0,1].
\]

We proceed further inductively. Let $j\in\N$ be fixed and let us assume that %the functions
$W_0(t),\dots,W_j(t)$ were already constructed.
Then $W_t-W_j(t)$ vanishes at $t_{j,m}$ for all $m\in\{0,1,\dots,2^j\}.$
For $m\in\{0,1,\dots,2^j-1\}$, we define $W_{j+1}(t)$ for $t\in I_{j,m}$ by adding to $W_{j}(t)$ a continuous piecewise linear function with support in $I_{j,m}$
to ensure that $W_{j+1}(t)=W_t$ also in the middle point of $I_{j,m}$,
i.e., in $t_{j+1,2m+1}=\frac{2m+1}{2^{j+1}}=\frac{m}{2^j}+\frac{1}{2^{j+1}}$.
Hence, we need to add to $W_{j}(t)$ a multiple of the hat function $v_{j,m}(t)=v(2^j(t-t_{j,m}))$ with support in $I_{j,m}$ (see Figure \ref{fig:vjm})
\begin{equation}\label{eq:vjm}
v_{j,m}(t)=\begin{cases} 2^{j+1}(t-2^{-j}m)\quad &\text{if}\ 2^{-j}m\le t<2^{-j}m+2^{-j-1},\\
2^{j+1}(2^{-j}(m+1)-t)\quad &\text{if}\ 2^{-j}m+2^{-j-1}\le t<2^{-j}(m+1),\\
0&\text{otherwise}.
\end{cases}
\end{equation}
\begin{figure}[h]
\center{\includegraphics[width=6cm]{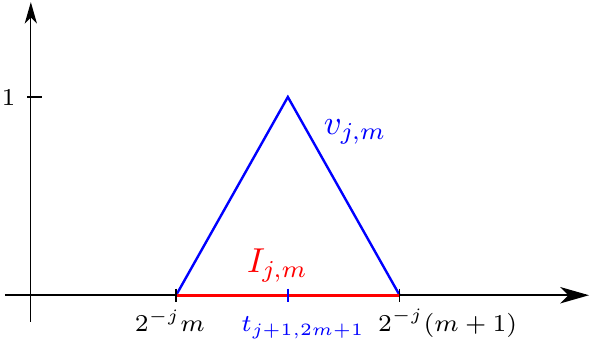}}
\caption{Hat function $v_{j,m}$}
\label{fig:vjm}
\end{figure}\\
We repeat this procedure for every $m\in\{0,1,\dots,2^j-1\}$ and obtain
\begin{equation}\label{eq:Wiener_decomp1}
W_{j+1}(t):=W_j(t)+\sum_{m=0}^{2^j-1} \left\{W_{t_{j+1,2m+1}}-W_j(t_{j+1,2m+1})\right\}v_{j,m}(t),\quad t\in[0,1].
\end{equation}

\begin{figure}[h]
\center{\includegraphics[width=12cm]{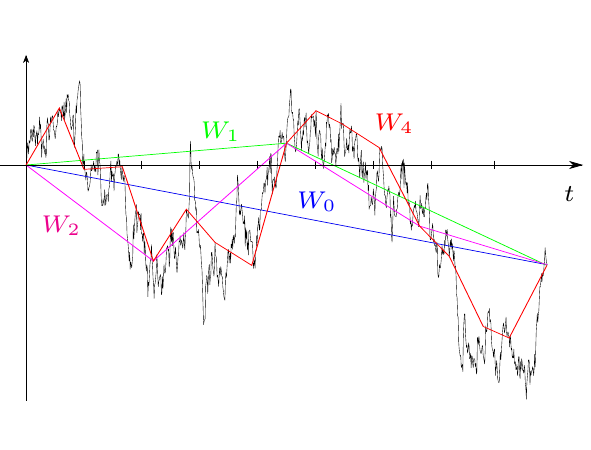}}
\caption{Piecewise linear functions $W_j(t)$ approximating $W_t$}
\end{figure}

The main disadvantage of \eqref{eq:Wiener_decomp1} is that the coefficients in the sum over $m$ involve both the values
of the Wiener path $W_t$ as well as the values of its approximation $W_j(t)$. Therefore, we rewrite it in such a form, that
only the values of $W_t$ at dyadic points get used.

%Next, we need to rewrite this decomposition in the language of (independent) random variables.
By its construction, $W_j(t)$ is linear on every $I_{j,m}$ and coincides with $W_t$ at its endpoints and, therefore, we may also write it as
\[
W_j(t)=W_{\frac{m}{2^j}}+\Bigl(t-\frac{m}{2^j}\Bigr)\cdot 2^j\cdot \Bigl[W_{\frac{m+1}{2^j}}-W_{\frac{m}{2^j}}\Bigr],\quad t\in I_{j,m}.
\]
This allows us to rewrite the coefficients of \eqref{eq:Wiener_decomp1} as
\begin{align*}
W_{t_{j+1,2m+1}}-W_j(t_{j+1,2m+1})&=W_{\frac{2m+1}{2^{j+1}}}-W_j\Bigl(\frac{2m+1}{2^{j+1}}\Bigr)\\
&=W_{\frac{2m+1}{2^{j+1}}}-\Bigl(W_{\frac{m}{2^j}}+\frac{1}{2^{j+1}}\cdot 2^j\cdot \Bigl[W_{\frac{m+1}{2^j}}-W_{\frac{m}{2^j}}\Bigr]\Bigr)\\
&=-\frac{1}{2}\Bigl(W_{\frac{2m+2}{2^{j+1}}}-2W_{\frac{2m+1}{2^{j+1}}}+W_{\frac{2m}{2^{j+1}}}\Bigr)\\
&=-\frac{1}{2}(\Delta^2_{2^{-j-1}}W)\Bigl(\frac{2m}{2^{j+1}}\Bigr),
%&=-\frac{1}{2}\biggl[\Bigl(W_{\frac{2m+2}{2^{j+1}}}-W_{\frac{2m+1}{2^{j+1}}}\Bigr)-\Bigl(W_{\frac{2m+1}{2^{j+1}}}-W_{\frac{2m}{2^{j+1}}}\Bigr)\biggr].
\end{align*}
where
\[
(\Delta^2_hf)(x)=f(x+2h)-2f(x+h)+f(x)=\Bigl(f(x+2h)-f(x+h)\Bigr)-\Bigl(f(x+h)-f(x)\Bigr)
\]
are the second order differences of a function $f$. Together with \eqref{eq:Wiener_decomp1} this leads to
\begin{equation}\label{eq:Wiener_decomp2}
W_{j+1}(t):=W_j(t)-\frac{1}{2}\sum_{m=0}^{2^j-1} (\Delta^2_{2^{-j-1}}W)\Bigl(\frac{2m}{2^{j+1}}\Bigr)v_{j,m}(t),\quad t\in[0,1].
\end{equation}

The reader may notice that all what we did so far, including \eqref{eq:Wiener_decomp2}, applies
to general continuous functions on $I$. We summarize this in the following theorem (and refer to \cite[Theorem 2.1]{Triebel10} for a detailed proof
and to \cite{Fab09,Haar10} for historic sources).
\begin{thm}\label{thm:triebel1} Let $f\in C(I)$. Then
\begin{equation}\label{eq:triebel1}
f(t)=f(0)\cdot (1-t)+f(1)\cdot t-\frac{1}{2}\sum_{j=0}^\infty \sum_{m=0}^{2^j-1}(\Delta^2_{2^{-j-1}}f)(2^{-j}m)v_{j,m}(t)
\end{equation}
for every $0\le t\le 1$ and the series converges uniformly on $I$.
\end{thm}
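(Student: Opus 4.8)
The plan is to recognize that the entire algebraic construction carried out above for the Wiener path $W_t$ never used any probabilistic property of $W$ and applies verbatim to an arbitrary $f\in C(I)$. Concretely, I would define $f_j\in C(I)$ to be the piecewise linear interpolant of $f$ on the dyadic grid of level $j$, i.e., the continuous function that is affine on each $I_{j,m}$ and satisfies $f_j(t_{j,m})=f(t_{j,m})$ for all $m\in\{0,\dots,2^j\}$. For the base level this is exactly $f_0(t)=f(0)(1-t)+f(1)t$, which is the non-series part of \eqref{eq:triebel1}. The inductive step is identical to the derivation leading to \eqref{eq:Wiener_decomp2}: to upgrade $f_j$ to $f_{j+1}$ one only has to correct the values at the new midpoints $t_{j+1,2m+1}$, and since $f_j$ is affine on $I_{j,m}$ with prescribed endpoint values, the required correction at the midpoint equals $-\tfrac12(\Delta^2_{2^{-j-1}}f)(2^{-j}m)$, to be multiplied by the hat function $v_{j,m}$ from \eqref{eq:vjm}.

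Granting this recursion, the next step is a telescoping argument: summing
\[
f_{j+1}-f_j=-\frac12\sum_{m=0}^{2^j-1}(\Delta^2_{2^{-j-1}}f)(2^{-j}m)\,v_{j,m}
\]
over $j=0,\dots,N-1$ shows that the $N$-th partial sum of the series in \eqref{eq:triebel1} is precisely $f_N$, the dyadic interpolant at level $N$. Thus the claimed identity and the uniform convergence of the series are both equivalent to the single statement that $f_N\to f$ uniformly on $I$.

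The only genuinely analytic point—and the one I expect to be the crux—is this last uniform convergence. Here I would invoke the uniform continuity of $f$ on the compact interval $I$. For $t\in I_{N,m}$ the value $f_N(t)$ is a convex combination of $f(t_{N,m})$ and $f(t_{N,m+1})$, both of which lie within distance $2^{-N}$ of $t$; hence
\[
|f(t)-f_N(t)|\le \omega(f,2^{-N}),\qquad \omega(f,\delta):=\sup_{|s-t|\le\delta}|f(s)-f(t)|.
\]
Since $f$ is uniformly continuous, $\omega(f,2^{-N})\to 0$ as $N\to\infty$, and as this bound is independent of $t$, the convergence $f_N\to f$ is uniform on $I$. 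This completes the proof; note that apart from uniform continuity the argument is purely combinatorial, so no probabilistic input or smoothness beyond continuity is needed.
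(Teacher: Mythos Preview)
Your proposal is correct and follows exactly the approach the paper sketches: the paper develops the recursion \eqref{eq:Wiener_decomp2} for $W_t$, explicitly notes that ``all what we did so far, including \eqref{eq:Wiener_decomp2}, applies to general continuous functions on $I$,'' and then cites \cite[Theorem 2.1]{Triebel10} for the details. Your telescoping identification of the partial sums with the dyadic interpolants $f_N$ and the modulus-of-continuity bound $|f(t)-f_N(t)|\le \omega(f,2^{-N})$ are precisely the missing details, and they are carried out correctly.
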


To transform \eqref{eq:Wiener_decomp2} into a series representation of $W_t$, we note that the variables
$W_{\frac{2m+2}{2^{j+1}}}-W_{\frac{2m+1}{2^{j+1}}}$ and $W_{\frac{2m+1}{2^{j+1}}}-W_{\frac{2m}{2^{j+1}}}$
are independent and have  distribution ${\mathcal N}(0,2^{-(j+1)})$.
Hence, by the 2-stability of the normal distribution,  cf. Lemma \ref{2-stability}, 
$W_{t_{j+1,2m+1}}-W_j(t_{j+1,2m+1})=-\frac{1}{2}(\Delta^2_{2^{-j-1}}W)\Bigl(\frac{2m}{2^{j+1}}\Bigr)$ is normally distributed with mean zero and variance $2^{-(j+2)}$.

We can therefore rewrite \eqref{eq:Wiener_decomp2} as
$$
W_{j+1}(t)=W_j(t)+\sum_{m=0}^{2^j-1} 2^{-(j+2)/2}\xi_{j,m}v_{j,m}(t),\quad t\in[0,1],
$$
where $\xi_{j,m}$ are standard normal variables.
An explicit formula for $W_t$ can then be obtained by noting that the series
\begin{equation}\label{eq:Levy_series}
\sum_{j=0}^{\infty}\Bigl(W_{j+1}(t)-W_j(t)\Bigr)
\end{equation}
converges almost surely uniformly to \[
W_t-W_0(t)=W_t-W_1\cdot t\qquad  \text{with}\qquad  W_1= \xi_{-1}\sim {\mathcal N}(0,1).
\]
This follows by the tail bound for normal variables, cf. Lemma \ref{lem:tail_bounds}, and a straightforward union bound, which give for every real $A>1$ that 
\begin{align*}
\P\left(\exists j\in\N_0: \|W_{j+1}-W_j\|_\infty>A\cdot 2^{-j/4}\right)
&\le \sum_{j=0}^\infty \sum_{m=0}^{2^j-1}\P(|\xi_{j,m}|>2A\cdot 2^{j/4})\\
&\le \sum_{j=0}^\infty 2^j\exp(-2A^2\cdot 2^{j/2}). 
\end{align*}
If $A$ goes to infinity, the last sum tends to zero and, therefore, the probability that $\|W_{j+1}-W_j\|_\infty\le A\cdot 2^{-j/4}$ for all $j\in\N_0$ grows to one. This ensures that \eqref{eq:Levy_series} converges uniformly almost surely.

%The almost sure uniform convergence of \eqref{eq:Levy_series} then follows by the Borel-Cantelli Lemma and the fact, that the last series tends to zero if $A$ grows to infinity.

This yields that we have almost surely
\begin{equation}\label{eq:Levy01}
W_t=\xi_{-1}\cdot t+\sum_{j=0}^\infty \sum_{m=0}^{2^j-1} 2^{-(j+2)/2}\xi_{j,m}v_{j,m}(t),\quad t\in[0,1]. 
\end{equation}

As the last step, we need to complement \eqref{eq:Levy01} by the crucial observation that the random variables $\{\xi_{-1}\}\cup\{\xi_{j,m}, j\in\N_0, 0\le m\le 2^j-1\}$ are independent.
For that sake, let $\xi_{j_1,m_1},\dots,\xi_{j_N,m_N}$ be fixed and let us put
\[
J=\max(j_1,\dots,j_N).
\]
We collect the independent Gaussian variables
\[
W^l=W_{\frac{l+1}{2^{J+1}}}-W_{\frac{l}{2^{J+1}}},\quad l=0,1,\dots,2^{J+1}-1,
\]
into a vector $\widetilde W^J=(W^0,\dots,W^{2^{J+1}-1})^T.$
Using this notation, we observe that
\begin{align*}
\xi_{-1}&=W_1-W_0=\sum_{l=0}^{2^{J+1}-1}W^l=\langle(1,\dots,1)^T,\widetilde W^J\rangle.
\intertext{Similarly, for every $0\,\le\,j\le J$ and $m\in\{0,\dots,2^{j}-1\}$, we get}
-2^{-j/2}\xi_{j,m}&=\Bigl(W_{\frac{2m+2}{2^{j+1}}}-W_{\frac{2m+1}{2^{j+1}}}\Bigr)-\Bigl(W_{\frac{2m+1}{2^{j+1}}}-W_{\frac{2m}{2^{j+1}}}\Bigr)\\
&=\sum_{l=0}^{2^{J-j}-1} W^{(2m+1)2^{J-j}+l}-\sum_{l=0}^{2^{J-j}-1}W^{2m\cdot 2^{J-j}+l}=\langle h^J_{j,m},\widetilde W^J\rangle,
\end{align*}
where
\begin{equation}\label{eq:def_h}
(h^J_{j,m})_{l }=\begin{cases}+1\quad &\text{if}\ (2m+1)2^{J-j}\le  l <(2m+2)2^{J-j},\\
-1\quad &\text{if}\ 2m\cdot 2^{J-j}\le  l <(2m+1)2^{J-j},\\
0\quad &\text{otherwise}
\end{cases}
\end{equation}
for $0\le  l \le 2^{J+1}-1$.
The independence of $\xi_{j_1,m_1},\dots,\xi_{j_N,m_N}$ now follows from %the independence of the entries  $W^0,W^1,\dots,W^{2^{J+1}-1}$ of $\widetilde{W}^J$ and
the orthogonality of the vectors $(h_{j_i,m_i}^J)_{i=1}^N$, cf. Lemma \ref{2-stability}. This leads to the following representation.
\begin{thm}\label{thm:LevyDecomp}
Let $(W_t)_{t\in I}$ be the Brownian motion according to Definition \ref{def:BM}. If $v_{j,m}$ denotes the Faber system according to \eqref{eq:vjm}, then almost surely it holds
\begin{align*}
W_t=\xi_{-1}\cdot t+\sum_{j=0}^\infty \sum_{m=0}^{2^j-1} 2^{-(j+2)/2}\xi_{j,m}v_{j,m}(t)\quad\text{ for } t\in[0,1],
\end{align*}
where $\{\xi_{-1}\}\cup\{\xi_{j,m}: j\in\N_0, 0\le m\le 2^j-1\}$ are independent $\mathcal{N}(0,1)$ random variables and the series converges uniformly on $I$.
\end{thm}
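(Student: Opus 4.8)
The plan is to assemble the statement from the pieces already prepared: the deterministic Faber expansion of the interpolants, the distributional identification of the coefficients, and two probabilistic facts (uniform convergence and joint independence). First I would use the piecewise-linear interpolants $W_j$ of $W$ at the dyadic nodes $t_{j,m}$ together with the identity \eqref{eq:Wiener_decomp2}, namely $W_{j+1}-W_j=-\frac12\sum_{m=0}^{2^j-1}(\Delta^2_{2^{-j-1}}W)(2m\cdot2^{-(j+1)})\,v_{j,m}$, which is the purely deterministic Faber decomposition valid for any continuous path (Theorem \ref{thm:triebel1}). The coefficient multiplying $v_{j,m}$ equals $-\frac12$ times the difference of the two independent increments $W_{(2m+2)2^{-(j+1)}}-W_{(2m+1)2^{-(j+1)}}$ and $W_{(2m+1)2^{-(j+1)}}-W_{2m\cdot2^{-(j+1)}}$, each distributed as $\mathcal N(0,2^{-(j+1)})$; by the $2$-stability of the Gaussian (Lemma \ref{2-stability}) this coefficient is $\mathcal N(0,2^{-(j+2)})$ and may be written as $2^{-(j+2)/2}\xi_{j,m}$ with $\xi_{j,m}$ standard normal. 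Likewise $W_0(t)=W_1 t$ with $W_1=\xi_{-1}\sim\mathcal N(0,1)$, so telescoping $\sum_{j\ge0}(W_{j+1}-W_j)$ about $W_0$ produces the claimed series, once its uniform convergence to $W_t-\xi_{-1}t$ is secured.

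\emph{Convergence.} Next I would establish almost sure uniform convergence. Since, for fixed $j$, the $v_{j,m}$ have disjoint supports and each peaks at value $1$, one has $\|W_{j+1}-W_j\|_\infty=2^{-(j+2)/2}\max_m|\xi_{j,m}|$. Applying the Gaussian tail bound (Lemma \ref{lem:tail_bounds}) to each of the $2^j$ terms and a union bound gives, for every $A>1$, the estimate $\P(\exists j:\|W_{j+1}-W_j\|_\infty>A\cdot2^{-j/4})\le\sum_{j\ge0}2^j\exp(-2A^2\cdot2^{j/2})$, which tends to $0$ as $A\to\infty$. Hence on an event of probability one there is a (random) $A$ with $\|W_{j+1}-W_j\|_\infty\le A\cdot2^{-j/4}$ for all $j$, so the telescoping series is dominated by a convergent geometric series and converges uniformly. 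Because $W$ has almost surely continuous paths and the dyadic nodes are dense in $I$, the uniform limit of the interpolants $W_j$ must coincide with $W$ itself; subtracting $W_0(t)=\xi_{-1}t$ then yields \eqref{eq:Levy01}.

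\emph{Independence (the main obstacle).} The delicate point is to upgrade the individual Gaussianity of each $\xi_{j,m}$ (and of $\xi_{-1}$) to joint independence of the whole family. I would fix an arbitrary finite subfamily $\xi_{j_1,m_1},\dots,\xi_{j_N,m_N}$, set $J=\max_i j_i$, and express every member as a linear functional $\langle h^J_{j,m},\widetilde W^J\rangle$ of the single vector $\widetilde W^J$ collecting the $2^{J+1}$ independent and identically distributed increments at scale $2^{-(J+1)}$, where $h^J_{j,m}$ is the $\pm1$ pattern of \eqref{eq:def_h} and the all-ones vector is used for $\xi_{-1}$. The crux is then to verify that these coefficient vectors are pairwise orthogonal: within a fixed level $j$ the supports of distinct $h^J_{j,m}$ are disjoint, while across two levels the finer Haar-like sign pattern integrates to zero against the constant sign of the coarser one over its support. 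Since $\widetilde W^J$ has independent Gaussian entries, orthogonality of the coefficient vectors is precisely uncorrelatedness of the corresponding linear combinations, and for jointly Gaussian variables uncorrelatedness implies independence (Lemma \ref{2-stability}). As the finite subfamily was arbitrary, the entire family $\{\xi_{-1}\}\cup\{\xi_{j,m}\}$ is independent, which completes the proof.
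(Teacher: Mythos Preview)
Your proposal is correct and follows essentially the same approach as the paper: the deterministic Faber/interpolant recursion \eqref{eq:Wiener_decomp2}, the identification of each coefficient as $2^{-(j+2)/2}\xi_{j,m}$ via $2$-stability, the uniform-convergence argument based on the union bound $\sum_j 2^j\exp(-2A^2 2^{j/2})$, and the independence proof via the orthogonal Haar-type vectors $h^J_{j,m}$ acting on the increment vector $\widetilde W^J$ are exactly the steps the paper carries out in Section~\ref{subsec:21}.
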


\subsection{Function spaces and Faber systems}\label{sec:Faber}

As already explained in Section \ref{sec:Into}, the description of the regularity of paths of Brownian motion will be given in different
scales of function spaces of Besov, H\"older, and Orlicz type.
In the sequel, we try to give in brief the basic definitions and characterizations of these spaces.
%Unfortunately, we also need to adapt the standard notation
%to our needs, but we are presenting it as simple as possible.  

We assume, that the reader is familiar with the spaces of complex-valued continuous functions $C(\R)$
and $C(I)$ as well as with the Lebesgue spaces of integrable functions $L_p(\R)$ and $L_p(I)$.
For Lebesgue spaces we simplify the notation by writing the norms
\begin{align}
    \|f\|_p:=\norm{f}{L_p}.\label{simplenorm}
\end{align}
The domain $I$ or $\R$ of the function $f$ in \eqref{simplenorm} should always be clear from the context.

For any $0<p\le \infty$ and any $f\in L_p(\R)$, we denote by
\begin{equation}\label{differences}
(\Delta^1_h f)(x)=f(x+h)-f(x), \qquad (\Delta^{M+1}_h f)=\Delta_h^1(\Delta_h^M f),
\end{equation}
the usual first-order and higher-order differences (as already briefly mentioned in the previous section), where $x\in \R$, $h\in \R$ and $M\in \mathbb{N}$. We start with the definition of Besov spaces. These spaces have a long history and many of their aspects were studied in the last decades, cf. \cite{Peetre, SiRu, TrFS1}.

\begin{dfn}
\begin{enumerate}
\item[(i)] Let $s>0$ and $0<p,q\leq \infty$. Then the Besov space $B^s_{p,q}(\R)$ is the collection of all $f\in L_p(\R)$ such that for  $M=\lfloor s\rfloor$+1, 
\[
\|f|B^s_{p,q}(\R)\|=\|f\|_p+\left(\int_0^1 t^{-sq}\sup_{|h|\leq t} \|\Delta^M_hf\|_p^q\frac{dt}{t}\right)^{1/q}<\infty. 
\]
Here, $\lfloor s\rfloor$ is the greatest integer less than or equal to $s$.
\item[(ii)] Besov spaces on the interval $I=[0,1]\subset \R$ are defined via restriction, i.e., 
\begin{equation}\label{restriction}
B^s_{p,q}(I):=\left\{f\in L_p(I): \ f=g\big|_{I}\ \text{ for some }\ g\in B^s_{p,q}(\R)\right\}, 
\end{equation}
normed by 
\[
\|f|B^s_{p,q}(I)\|=\inf \|g|B^s_{p,q}(\R)\|,
\]
where the infimum is taken over all $g\in B^s_{p,q}(\R)$ with $g\big|_{I}=f$.
\end{enumerate}
\end{dfn}

Our approach to regularity of Brownian paths is based on the close connection
between Besov spaces (and other function spaces) and the decompositions in the Faber system.
The Faber system on the interval $I=[0,1]$ is the collection of functions
\[
\{v_0,v_1,v_{j,m}:j\in\N_0, m=0,\dots,2^{j}-1\},
\]
where
\[
v_0(x)=1-x,\quad v_1(x)=x,\quad x\in I
\]
and $v_{j,m}$ is defined by \eqref{eq:vjm} for $j\ge 0$. 
Then Theorem 2.1 of \cite{Triebel10} (cf. Theorem \ref{thm:triebel1}) shows that the Faber system
is a (conditional) basis of $C(I)$ and that every $f\in C(I)$ can be written as
\begin{equation*}%\label{faber-dec-1}
f(x)=f(0)v_0(x)+f(1)v_1(x)-\frac{1}{2}\sum_{j=0}^\infty \sum_{m=0}^{2^j-1}(\Delta^2_{2^{-j-1}}f)(2^{-j}m)v_{j,m}(x),\quad x\in I.
\end{equation*}

Furthermore, concerning the decomposition of Besov spaces $B^s_{p,q}(I)$ with the above  Faber system, we recall Theorem 3.1 in \cite[page 126]{Triebel10}.   

\begin{thm}\label{thm:faber_s} Let  $0<p,q\le\infty$ and\\
\begin{minipage}{0.6\textwidth}
\begin{equation*}%\label{eq:Triebel_cond1}
\frac{1}{p}<s<1+\min\Bigl(\frac{1}{p},1\Bigr)
\end{equation*}
be the admissible range for $s$ as illustrated in the figure aside.  Then the sum
\begin{equation}\label{eq:Triebel01}
f=\mu_0v_0+\mu_1v_1+\sum_{j=0}^\infty \sum_{m=0}^{2^j-1}\mu_{j,m}2^{-js}v_{j,m}
\end{equation}
\end{minipage}\hfill \begin{minipage}{0.3\textwidth}
\includegraphics[width=4cm]{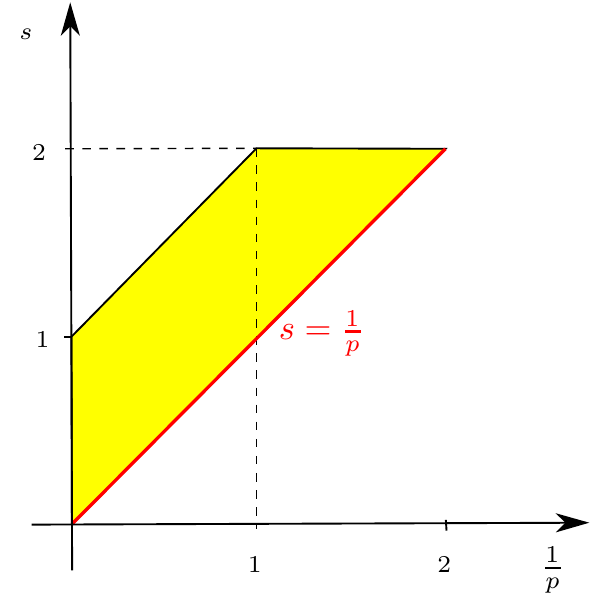}
\end{minipage}\\
with $\mu_0=\mu_0(f)=f(0)$, $\mu_1=\mu_1(f)=f(1)$ and $$\mu_{j,m}=\mu_{j,m}(f)=-2^{js-1}(\Delta^2_{2^{-j-1}}f)(2^{-j}m)$$
lies in $B^s_{p,q}(I)$ if, and only if,
$$
\|\mu|b^+_{p,q}(I)\|:=|\mu_0|+|\mu_1|+\biggl(\sum_{j=0}^\infty \Bigl(\sum_{m=0}^{2^j-1}2^{-j}|\mu_{j,m}|^p\Bigr)^{q/p}\biggr)^{1/q}<\infty.
$$
\end{thm}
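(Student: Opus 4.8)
The plan is to establish the two-sided equivalence $\|f|B^s_{p,q}(I)\|\approx \|\mu(f)|b^+_{p,q}(I)\|$, from which both implications of the stated ``if and only if'' follow at once. Throughout I abbreviate $\beta_j=\left(\sum_{m=0}^{2^j-1}2^{-j}|\mu_{j,m}|^p\right)^{1/p}$, so that the sequence norm reads $\|\mu|b^+_{p,q}(I)\|=|\mu_0|+|\mu_1|+\|\{\beta_j\}_j|\ell_q\|$. Since $s>1/p$ we have the embedding $B^s_{p,q}(I)\hookrightarrow C(I)$; this guarantees that the point values $f(0),f(1)$ and the sampled second differences $(\Delta^2_{2^{-j-1}}f)(2^{-j}m)$ entering the $\mu_{j,m}$ are well defined, and that the uniformly convergent Faber expansion of Theorem \ref{thm:triebel1} genuinely reproduces $f$. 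The two boundary coefficients are harmless: $|\mu_0|+|\mu_1|\le 2\|f\|_\infty\lesssim\|f|B^s_{p,q}(I)\|$, while conversely $\mu_0 v_0+\mu_1 v_1$ is an affine function whose Besov norm is $\lesssim|\mu_0|+|\mu_1|$. It therefore remains to compare $\|\{\beta_j\}_j|\ell_q\|$ with the modulus-of-smoothness part of $\|f|B^s_{p,q}(I)\|$; here I will use that, in the admissible range, second-order differences ($M=2$) characterize $B^s_{p,q}$, which matches the second-difference structure of the coefficients, at the cost of requiring $s<2$.

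For the sufficiency direction I would estimate the Besov norm of the level blocks $g_j=\sum_{m=0}^{2^j-1}\mu_{j,m}2^{-js}v_{j,m}$ directly. As the $v_{j,m}$ have pairwise disjoint supports $I_{j,m}$ of length $2^{-j}$ and height one, summing over $m$ gives $\|g_j\|_p\approx 2^{-js}\beta_j$. The decisive elementary input is the behaviour of the second differences of the piecewise linear $v_{j,m}$: for a step $|h|\ge 2^{-j}$ one has the trivial bound $\|\Delta^2_h g_j\|_p\lesssim 2^{-js}\beta_j$, whereas for a small step $|h|\le 2^{-j}$ the function $\Delta^2_h v_{j,m}$ is supported in an $O(|h|)$-neighbourhood of the kink of $v_{j,m}$ and has size $O(2^j|h|)$, which after the disjoint-support summation yields $\|\Delta^2_h g_j\|_p\lesssim (2^j|h|)^{1+1/p}\,2^{-js}\beta_j$. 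Inserting $f=\sum_j g_j$ into the Besov norm, taking $t\approx|h|$, and applying a discrete Hardy (Young) inequality --- convolving the sequence $\{\beta_j\}_j\in\ell_q$ against a two-sided geometrically decaying kernel --- gives $\|f|B^s_{p,q}(I)\|\lesssim\|\mu|b^+_{p,q}(I)\|$. The two decay rates of that kernel are $s>0$ and $1+1/p-s>0$; combined with the constraint $s<2$ noted above, this is precisely the upper bound $s<1+\min(1/p,1)$.

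The necessity direction is where I expect the real difficulty. Here I must bound $\beta_j=2^{js-1}\left(\sum_{m}2^{-j}|(\Delta^2_{2^{-j-1}}f)(2^{-j}m)|^p\right)^{1/p}$ by the integral modulus of smoothness occurring in the Besov norm. The obstacle is that $\beta_j$ sees only the point samples of $g:=\Delta^2_{2^{-j-1}}f$ at the dyadic nodes $2^{-j}m$, whereas the Besov norm controls $\|g\|_p$ in the mean. The plan is a sampling inequality: starting from $|g(2^{-j}m)|^p\lesssim 2^j\int_{I_{j,m}}|g(x)|^p\,dx+2^j\int_{I_{j,m}}|g(2^{-j}m)-g(x)|^p\,dx$, the first term sums to $\|\Delta^2_{2^{-j-1}}f\|_p^p\le \sup_{|h|\le 2^{-j}}\|\Delta^2_h f\|_p^p$, which is exactly the quantity Hardy's inequality wants. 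The oscillation term is the hard part: it is again a sampled (now mixed third-order) difference of $f$, so it does not close in one step and must be iterated across the finer dyadic scales; the resulting geometric series converges precisely because $s>1/p$, which is the role of the lower bound. A final discrete Hardy inequality then returns $\|\mu|b^+_{p,q}(I)\|\lesssim\|f|B^s_{p,q}(I)\|$. An equivalent and perhaps more conceptual framing of this direction is via approximation theory: the partial sums of the Faber series are the dyadic piecewise-linear interpolants of $f$, so the whole statement is an instance of the DeVore--Popov type equivalence between Besov smoothness and the rate of spline interpolation, the crux being the Jackson inequality for interpolation in $L_p$, valid exactly for $s>1/p$. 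For the interval $I$ one works throughout with an extension $g\in B^s_{p,q}(\R)$ of $f$, as in the restriction definition \eqref{restriction}, so that differences near the endpoints cause no trouble, and passes to the infimum over extensions at the end.
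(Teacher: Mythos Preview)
The paper does not prove this theorem at all: it is quoted verbatim from Triebel's monograph \cite[Theorem~3.1, p.~126]{Triebel10}, with the sentence ``we recall Theorem 3.1 in \cite[page 126]{Triebel10}'' immediately preceding the statement. There is therefore no proof in the paper to compare your proposal against.

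That said, your outline is a faithful sketch of how the result is actually established in the literature. The sufficiency direction --- disjointly supported hat functions, the dichotomy $\|\Delta^2_h g_j\|_p\lesssim 2^{-js}\beta_j$ for $|h|\ge 2^{-j}$ versus $\|\Delta^2_h g_j\|_p\lesssim (2^j|h|)^{1+1/p}2^{-js}\beta_j$ for $|h|\le 2^{-j}$, followed by a discrete Hardy/Young convolution in $\ell_q$ --- is exactly the mechanism behind Triebel's proof (and, more generally, behind atomic decompositions of Besov spaces). Your identification of the two exponents $s$ and $1+1/p-s$ as the convergence conditions on the two sides of the kernel, and of $s<2$ as the order-of-differences constraint, correctly recovers the admissible strip. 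For the necessity direction, the sampling-plus-iteration argument you describe is one valid route; an alternative (closer to what Triebel actually does) is to pass through the Haar system first, since $v_{j,m}'$ is a Haar function and the Haar characterization of $B^{s-1}_{p,q}$ is available, but your approach via the DeVore--Popov interpolation/Jackson inequality viewpoint is equally legitimate and arguably more transparent about why $s>1/p$ is the threshold. One small caution: for $p<1$ the spaces are only quasi-Banach, so when you sum the blocks $g_j$ and invoke the triangle inequality you should work with $p$-powers; this does not change the geometry of the argument but is worth flagging if you intend to write the proof out in full.
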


\subsubsection{Function spaces of logarithmic smoothness}
As pointed out already in the Introduction, one can obtain finer  descriptions of the regularity properties of Brownian paths,
if one uses different (and more sophisticated) scales of function spaces.

Therefore, we now introduce   the so-called function spaces of logarithmic smoothness,
cf. \cite{DomTikh, FaLeo, KaLi, Moura}.
Here again we rely on the exposition and results from \cite{Triebel10}. In particular, function spaces of logarithmic smoothness are a special case of Besov and Triebel-Lizorkin spaces of generalized smoothness, where the smoothness factor
\[
    2^{js}\qquad \text{gets replaced by}\qquad 2^{js}(1+j)^{-\alpha}
\]
with $s,\alpha\in\R$. Following \cite[Proposition 1.7.4]{Triebel10} we can define function spaces of logarithmic smoothness by differences as follows.
\begin{dfn}
Let $0<p,q\leq \infty$, $s,\alpha\in\R$, and $M=\lfloor s\rfloor +1$ with
\begin{align*}
%    \label{eq:s_restr_log}
    s>\max\left(\frac1p,1\right)-1.
\end{align*}
Then the logarithmic space $B^{s,\alpha}_{p,q}(\R)$ contains all functions $f\in L_p(\R)$ with
\begin{align*}
    \norm{f}{B^{s,\alpha}_{p,q}(\R)}=\|f\|_p+\left(\int_0^1t^{-sq}(1+|\log t|)^{-\alpha q}\sup_{0<h<t}\|\Delta_h^M f\|_p^q\frac{d t}{t}\right)^{1/q}<\infty.
\end{align*}
\end{dfn}
Note that in comparison with \cite{Triebel10} we have replaced $\alpha$ by $-\alpha$ in the definition of the logarithmic space $B^{s,\alpha}_{p,q}(\R)$ leading to some minor adaptations in the theorem below.
Following \cite[Theorem 3.30]{Triebel10} we have also a characterization in terms of the Faber system of the spaces $B^{s,\alpha}_{p,q}(I)$ in which we are interested here. The restriction from $\R$ to $I$ is done in the same way as described in Subsection \ref{sec:Faber}, cf. \eqref{restriction}.
\begin{thm}\label{thm:faber_log}
Let $0<p,q\leq\infty$ and $s,\alpha\in\R$  with
\begin{align*}
    \frac1p<s<1+\min\left(\frac1p,1\right).
\end{align*}
Then $f\in L_p(I)$ belongs to $B^{s,\alpha}_{p,q}(I)$ if, and only if, it can be represented as 
\begin{align}\label{eq:log_faber}
   f=\mu_0v_0+\mu_1v_1+\sum_{j=0}^\infty \sum_{m=0}^{2^j-1}\mu_{j,m}2^{-js}v_{j,m},
\end{align}
where $\mu_{j,m}\in b_{p,q}^{+,\alpha}(I)$, i.e., 
\begin{align*}%\label{eq:balphapq}
    \|\mu|b^{+,\alpha}_{p,q}(I)\|:=|\mu_0|+|\mu_1|+\biggl(\sum_{j=0}^\infty(1+j)^{-\alpha q} \Bigl(\sum_{m=0}^{2^j-1}2^{-j}|\mu_{j,m}|^p\Bigr)^{q/p}\biggr)^{1/q}<\infty.
\end{align*}
Here the sum in \eqref{eq:log_faber} converges unconditionally  in $B^\sigma_{p,q}(I)$ for $\sigma<s$ and in  $C(I)$. Moreover,  the representation is unique with
\begin{align*}
    \mu_0&=\mu_0(f)=f(0),\,\mu_1=\mu_1(f)=f(1) \text{ and}\\
    \mu_{j,m}&=\mu_{j,m}(f)=-2^{js-1}(\Delta^2_{2^{-j-1}}f)(2^{-j}m)
\end{align*}
where $j\in\N_0$ and $m=0,\dotsc,2^j-1$.
\end{thm}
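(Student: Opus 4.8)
The plan is to deduce the claim by carrying the logarithmic weight $(1+j)^{-\alpha}$ through the argument that already establishes the classical Faber characterization in Theorem \ref{thm:faber_s}. In this weighted form the statement is \cite[Theorem 3.30]{Triebel10}, from which our version differs only by the sign convention $\alpha \leftrightarrow -\alpha$ recorded above the statement. I would therefore not repeat the analysis from scratch, but instead isolate the single place where the weight enters and verify that it enters compatibly on the sequence side and on the modulus-of-smoothness side.

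I split the equivalence into the two usual directions. For reconstruction, I take a sequence $\mu$ with $\|\mu|b^{+,\alpha}_{p,q}(I)\|<\infty$ and show that the series \eqref{eq:log_faber} converges---unconditionally in $B^\sigma_{p,q}(I)$ for $\sigma<s$ and in $C(I)$---to an element of $B^{s,\alpha}_{p,q}(I)$. Since the building blocks $v_{j,m}$ are identical to the classical case, the atomic estimates for them at scale $2^{-j}$ are exactly those used in Theorem \ref{thm:faber_s}, the only change being that the $j$-th dyadic block now carries the extra factor $(1+j)^{-\alpha}$. For the analysis direction I start from $f\in B^{s,\alpha}_{p,q}(I)$ with Faber coefficients $\mu_{j,m}(f)=-2^{js-1}(\Delta^2_{2^{-j-1}}f)(2^{-j}m)$ and bound the block quantity $\bigl(\sum_m 2^{-j}|\mu_{j,m}(f)|^p\bigr)^{1/p}$ by $2^{js}\sup_{0<h<2^{-j}}\|\Delta^2_h f\|_p$. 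Both block estimates are already contained (for the unweighted norm) in the proof of Theorem \ref{thm:faber_s} and may be quoted verbatim, block by block.

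The one genuinely new point is that the discrete weight and the continuous weight are interchangeable: since $|\log t|$ is comparable to $j$ uniformly for $t\in[2^{-j-1},2^{-j}]$, the factor $(1+|\log t|)^{-\alpha}$ is comparable to $(1+j)^{-\alpha}$ on that block with constants independent of $j$, which lets me pass between $\sum_j (1+j)^{-\alpha q}(\cdots)$ and the integral $\int_0^1 t^{-sq}(1+|\log t|)^{-\alpha q}\sup_{0<h<t}\|\Delta^2_h f\|_p^q\,\frac{dt}{t}$ exactly as one does in the unweighted case. A subsidiary point is that the Besov definition uses $M=\lfloor s\rfloor+1$ differences while the Faber coefficients use second differences, but since $s<2$ the order-two modulus of smoothness is admissible (its order exceeds $s$) and equivalent to the order-$M$ modulus used in the definition, so this is harmless. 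The main obstacle, such as it is, is precisely this weight-transfer bookkeeping together with the endpoint constraints $\frac1p<s<1+\min(\frac1p,1)$: the lower bound makes the pointwise sampling of $\Delta^2_{2^{-j-1}}f$ meaningful by forcing the embedding into $C(I)$, and the upper bound lets the piecewise-linear $v_{j,m}$ act as atoms of smoothness exceeding $s$. Since $(1+j)^{-\alpha}$ grows only sub-polynomially, it is negligible against these strict inequalities and does not alter the admissible range; the delicate endpoint analysis is thus inherited unchanged from Theorem \ref{thm:faber_s}, and only the harmless logarithmic factor has to be tracked.
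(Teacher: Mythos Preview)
Your proposal is correct and in fact goes further than the paper: the paper does not give a proof of this theorem at all but simply states it, citing \cite[Theorem~3.30]{Triebel10} (with the sign convention $\alpha\leftrightarrow-\alpha$ noted just before the statement). Your sketch of how the logarithmic weight is carried through the classical Faber argument is a reasonable outline of what that reference contains, but the paper itself treats the result as imported.
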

\begin{rem}%\label{rem:seqspaces}
In the theory of function spaces there are two different approaches on how to deal with the normalization factors appearing in characterizations with building blocks such as
atoms, wavelets or Faber functions (as it is our case here). The first approach tries to take the same building blocks for any function space $B^s_{p,q}$ or $B^{s,\alpha}_{p,q}$
independent from the chosen smoothness parameters $s,\alpha$. This results in the adaption of the corresponding sequence spaces in $b^s_{p,q}$ or $b^{s,\alpha}_{p,q}$,
where now these parameters play a role.

The second approach changes the definition of the building blocks according to the smoothness parameters $s$ and $\alpha$. The consequence is that then the
corresponding sequence spaces $b_{p,q}$ are independent on $s$ and $\alpha$.
%We even add more confusion to this topic since we mix both approaches in this work. The reason is that
In this work, we  always use the same decomposition of the function, which corresponds to  L\'evy's decomposition \eqref{eq:Levy01}.
For that reason, we prefer to work with \eqref{eq:Triebel01} and \eqref{eq:log_faber} in the above theorems. This results in Theorem \ref{thm:faber_s}, where 
  the sequence spaces $b^+_{p,q}$ corresponding to $B^s_{p,q}(I)$ are independent of the chosen smoothness $s$.
Furthermore in Theorem \ref{thm:faber_log} we only include the logarithmic smoothness parameter within
the sequence space norm. To that end, now $b^{+,\alpha}_{p,q}(I)$ corresponds to $B^{s,\alpha}_{p,q}(I)$, which is still independent on $s$ but depends on $\alpha$.
\end{rem}

\subsubsection{Besov-Orlicz spaces }

We replace the Lebesgue norm $\|\cdot\|_\infty$ in the interesting boundary case $p=\infty$ in the definition of the Besov spaces by the Orlicz-norm $\|\cdot\|_{\Phi_2}$, which is given by the following Young function
\begin{equation}\label{eq:Orl01}
\Phi_2(t)=\exp(t^2)-1\quad \text{for}\quad t>0.
\end{equation}
We will define for dimension $d\in\N$ the Orlicz space $L_{\Phi_2}([0,1]^d)$ as the collection of all measurable functions on $[0,1]^d$ with
\begin{equation}\label{eq:Orl02}
\|f\|_{{\Phi_2}}:=\inf\left\{\lambda>0: \int_{[0,1]^d}\Phi_2\biggl(\frac{|f(t)|}{\lambda}\biggr)dt\le 1\right\}<\infty.
\end{equation}

By Theorem \ref{thm:Orlicz} and Theorem \ref{thm:phi2}, $\|f\|_{{\Phi_2}}$ is also equivalent to
\begin{align*}
\sup_{0<t<1}\frac{f^*(t)}{\sqrt{\log(1/t)+1}}\qquad\text{and}\qquad \sup_{p\geq1}\frac{\|f\|_p}{\sqrt{p}},
\end{align*}
where
\begin{align*}%\label{nonincrrearrangement}
f^*(t)=\inf\left\{s\in[0,1]:|\{r\in[0,1]^d:|f(r)|>s\}|\leq t\right\}
\end{align*}
is the non-increasing rearrangement of $f$. This Orlicz norm and its equivalent expressions play a fundamental role in the characterization of sub-gaussian random variables \cite{Kahane,Ver}.

Now we are going to define the Besov-Orlicz spaces $B^{1/2}_{\Phi_2,\infty}(I)$ directly via the decompositions with the Faber system. The sequence space norm is a direct adaptation from the sequence spaces $b^+_{p,q}(I)$ from Theorem \ref{thm:faber_s},
where the $L_p(I)$ is now replaced by the Orlicz norm $L_{\Phi_2}(I)$, and the characterization from Theorem \ref{thm:phi2}. We also refer   to \cite[Theorem III.8]{CKR93}, where a characterization of these spaces with the help of the Faber system is given and to \cite{PiSi}, where one can find an alternative approach to Besov-Orlicz spaces.

\begin{dfn}\label{def:orlicz_besov}
\begin{itemize}
\item[(i)] The sequence space $b^{+}_{\Phi_2,\infty}$ is the collection of all sequences 
\[
\{\mu=(\mu_{j,m}): \ j\in \N_0, \ m=0,\ldots, 2^j-1\}
\]
such that 
\begin{align*}
\norm{\mu}{b^{+}_{\Phi_2,\infty}(I)}&:=\sup_{j\in\N_0}\left\|
\sum_{m=0}^{2^j-1}\mu_{j,m}\chi_{j,m}\right\|_{\Phi_2}\approx\sup_{j\in\N_0}\sup_{p\ge 1}\frac{1}{\sqrt{p}} \left\|\sum_{m=0}^{2^j-1}\mu_{j,m}\chi_{j,m}\right\|_p\\
&=\sup_{j\in\N_0}\sup_{p\ge 1}\frac{1}{\sqrt{p}}
\Bigl(\sum_{m=0}^{2^j-1}2^{-j}|\mu_{j,m}|^p\Bigr)^{1/p}<\infty,
\end{align*}
where $\chi_{j,m}$ is the characteristic function of $I_{j,m}$.
\item[(ii)] The function space $B^{1/2}_{{\Phi_2},\infty}(I)$ is the collection of all $f\in C(I)$ such that the coefficients of its representation 
\begin{equation}\label{eq:decomp_Faber1}
f(x)=\lambda_0v_0(x)+\lambda_1v_1(x)+\sum_{j=0}^\infty \sum_{m=0}^{2^j-1}2^{-j/2} \lambda_{j,m}v_{j,m}(x),\quad x\in I,
\end{equation}
satisfy $\norm{\lambda}{b^{+}_{\Phi_2,\infty}(I)}<\infty$.
\end{itemize}
\end{dfn} 

\subsection{Results of L\'evy and Ciesielski}\label{sub:2.3}

In this section we present proofs of the results of L\'evy \cite{Levy} and Ciesielski \cite{Ciesiel93} concerning the regularity of Wiener paths in certain function spaces.
Our main aim is to show that they both follow quite directly from L\'evy's decomposition of Wiener paths into the Faber system \eqref{eq:Levy01} combined
with the characterization of the corresponding function spaces via the Faber system.
For that sake, we summarize Theorem \ref{thm:faber_s}, Theorem \ref{thm:faber_log}, and Definition \ref{def:orlicz_besov}, which state
the conditions on the coefficients guaranteeing that a function belongs to the function spaces considered by L\'evy and Ciesielski.
In particular, we choose a formulation, which corresponds directly to \eqref{eq:Levy01}.

\begin{thm}\label{thm:function_spaces}
Consider a function $f\in C(I)$ with the representation 
\begin{equation*}%\label{eq:decomp_Faber1'}
f(x)=\lambda_0v_0(x)+\lambda_1v_1(x)+\sum_{j=0}^\infty \sum_{m=0}^{2^j-1} 2^{-\frac{j+2}{2}} \lambda_{j,m}v_{j,m}(x),\quad x\in I,
\end{equation*}
where $\{v_0,v_1, v_{j,m}: \,j\in \N_0,\, m=0,\ldots, 2^j-1\}$ 
denotes the Faber system on the interval $I=[0,1]$. 
\begin{enumerate}
\item[(i)] $f$ belongs to $B^{1/2,1/2}_{\infty,\infty}(I)$ if, and only if
\begin{equation}\label{eq:char_1}
\sup_{j\in\N}\frac{1}{\sqrt{j}}\sup_{m=0,\dots,2^j-1}|\lambda_{j,m}| < \infty.
\end{equation}
\item[(ii)] Let $1\le p<\infty$. Then $f$ belongs to $B^{1/2}_{p,\infty}(I)$ if, and only if
\begin{equation}\label{eq:char_2}
\sup_{j\in\N}\Bigl(\sum_{m=0}^{2^j-1} 2^{-j}|\lambda_{j,m}|^p\Bigr)^{1/p} < \infty.
\end{equation}
\item[(iii)] $f$ belongs to $B^{1/2}_{{\Phi_2},\infty}(I)$ if, and only if
\begin{equation}\label{eq:char_3}
\sup_{j\in\N}\sup_{p\ge 1}\frac{1}{\sqrt p}\Bigl(\sum_{m=0}^{2^j-1} 2^{-j}|\lambda_{j,m}|^p\Bigr)^{1/p} < \infty.
\end{equation}
\end{enumerate}
\end{thm}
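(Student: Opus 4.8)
The plan is to read off all three equivalences from the Faber-system characterizations already recorded in Theorem~\ref{thm:faber_s}, Theorem~\ref{thm:faber_log} and Definition~\ref{def:orlicz_besov}, once the normalization is matched. The representation in the statement carries the weight $2^{-(j+2)/2}=2^{-1}2^{-j/2}$, whereas the cited results use $2^{-js}$ with $s=1/2$. Hence setting $\mu_{j,m}=\lambda_{j,m}/2$ identifies the present expansion with the expansions there, and the harmless factor $1/2$ together with the two boundary coefficients $\mu_0=\lambda_0$, $\mu_1=\lambda_1$ (which enter every sequence norm only additively and finitely) may be absorbed into the equivalence constants. With this dictionary, each part becomes a translation of a sequence-space condition.

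First I would dispose of (iii), which is essentially definitional: by Definition~\ref{def:orlicz_besov}, $f\in B^{1/2}_{\Phi_2,\infty}(I)$ means exactly $\norm{\lambda}{b^+_{\Phi_2,\infty}(I)}<\infty$, and the displayed equivalent form of that norm (via Theorem~\ref{thm:phi2}) is \eqref{eq:char_3}. For (i) I would apply Theorem~\ref{thm:faber_log} with $s=\alpha=1/2$ and $p=q=\infty$; the admissibility range $1/p<s<1+\min(1/p,1)$ reads $0<1/2<1$ and holds. Evaluating $\norm{\mu}{b^{+,1/2}_{\infty,\infty}(I)}$ collapses the inner $\ell_p$-average to $\max_m|\mu_{j,m}|$ and the outer $\ell_q$-sum to a supremum, leaving $\sup_j(1+j)^{-1/2}\max_m|\mu_{j,m}|$; since $(1+j)^{-1/2}\approx j^{-1/2}$ for $j\ge 1$ (the level $j=0$ and the boundary terms being finite), this is equivalent to \eqref{eq:char_1}.

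The part requiring genuine work is (ii). For $p>2$ the range $1/p<1/2$ holds and Theorem~\ref{thm:faber_s} with $s=1/2$, $q=\infty$ gives \eqref{eq:char_2} at once, since $\Bigl(\sum_m 2^{-j}|\mu_{j,m}|^p\Bigr)^{1/p}=\|f_j\|_{L_p(I)}$ for the step function $f_j=\sum_m\lambda_{j,m}\chi_{j,m}$. For $1\le p\le 2$, however, $s=1/2$ lies on or below $1/p$ and the cited theorem no longer applies, so I would argue directly from the modulus of smoothness with $M=\lfloor 1/2\rfloor+1=1$. The easy \emph{if}-direction works for every $p$: writing $f=\lambda_0v_0+\lambda_1v_1+\sum_j g_j$ with $g_j=\sum_m 2^{-(j+2)/2}\lambda_{j,m}v_{j,m}$, one checks $\|g_j\|_p\approx 2^{-j/2}\|f_j\|_p$ and $\|g_j'\|_p=2^{j/2}\|f_j\|_p$, splits $\omega_1(f,t)_p$ across the levels (the affine part contributing only $O(t)$), and estimates the coarse levels $2^{-j}\ge t$ by $\|\Delta^1_hg_j\|_p\le|h|\,\|g_j'\|_p$ and the fine levels by $\|\Delta^1_hg_j\|_p\le 2\|g_j\|_p$; both are geometric sums dominated by their extreme term and yield $t^{-1/2}\omega_1(f,t)_p\lesssim\sup_j\|f_j\|_p$.

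The main obstacle is the converse (\emph{only if}) for $1\le p\le 2$. Here one is given $f\in B^{1/2}_{p,\infty}(I)$ and must bound $\sup_j\|f_j\|_p$, where $\lambda_{j,m}=-2^{j/2}(\Delta^2_{2^{-j-1}}f)(2^{-j}m)$, so that $\|f_j\|_p=2^{j/2}\bigl(2^{-j}\sum_m|(\Delta^2_{2^{-j-1}}f)(2^{-j}m)|^p\bigr)^{1/p}$. Since $0<1/2<1$, I would first replace the first-difference norm by the equivalent second- (or third-)difference characterization of $B^{1/2}_{p,\infty}(I)$, which controls $2^{(j+1)/2}\|\Delta^2_{2^{-j-1}}f\|_p$. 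It then remains to dominate the discrete $\ell_p$-average of the sampled second differences by their continuous $L_p$-norm $\|\Delta^2_{2^{-j-1}}f\|_p$. This reverse-sampling step is exactly where the classical range condition $s>1/p$ is normally invoked, and it is false for arbitrary functions; the way out is to write each sampled value as the mean of $\Delta^2_{2^{-j-1}}f$ over $I_{j,m}$ plus a remainder that is itself an integrated higher-order difference of $f$, absorbing the remainder through the decay $\omega_3(f,t)_p\lesssim t^{1/2}$. Making this remainder close without reintroducing uncontrolled point values is the delicate point of the whole argument.
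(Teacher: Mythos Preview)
Your treatment of (i), (iii) and of (ii) for $p>2$ is correct and is exactly how the paper proceeds: Theorem~\ref{thm:function_spaces} is stated in the paper without a separate proof, explicitly as a \emph{summary} of Theorem~\ref{thm:faber_s}, Theorem~\ref{thm:faber_log} and Definition~\ref{def:orlicz_besov}, after the obvious renormalisation $\mu_{j,m}=\lambda_{j,m}/2$ that you also spell out.

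Where you diverge from the paper is in part (ii) for $1\le p\le 2$. You are right that $s=1/2\le 1/p$ puts you outside the admissible strip of Theorem~\ref{thm:faber_s}, and that this leaves a gap in deriving \eqref{eq:char_2} as a genuine \emph{if and only if} for these $p$. The paper does not close this gap either. What it does instead (see the paragraph immediately after Theorem~\ref{thm:function_spaces}, item~2) is to use the characterization only for $2<p<\infty$, where Theorem~\ref{thm:faber_s} is available, and then recover the statement that Wiener paths lie in $B^{1/2}_{p,\infty}(I)$ for smaller $p$ from the monotonicity embedding $B^{1/2}_{p',\infty}(I)\hookrightarrow B^{1/2}_{p,\infty}(I)$ for $p\le p'$ on the bounded domain $I$. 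In other words, the paper never actually relies on \eqref{eq:char_2} as an equivalence for $p\le 2$; only the sufficiency for large $p$ plus the embedding are used.

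So your attempted direct proof of the full equivalence for $1\le p\le 2$---in particular the ``only if'' direction via sampled second differences, where you yourself flag the reverse-sampling estimate as the delicate and unfinished point---goes beyond what the paper claims or needs. If your goal is to match the paper, you can drop that whole passage and simply note, as the paper does, that the case $1\le p\le 2$ is not covered by Theorem~\ref{thm:faber_s} and will be handled by monotonicity in the application. If you actually want the two-sided characterization \eqref{eq:char_2} for all $1\le p<\infty$, be aware that the ``only if'' direction in the range $s\le 1/p$ is not automatic from the tools quoted here; your sketch does not yet control the remainder, and this is precisely the place where such Faber characterizations typically fail or require extra hypotheses.
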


\begin{enumerate}
\item {\bf Wiener paths belong to $B^{1/2,1/2}_{\infty,\infty}(I)$ almost surely (L\'evy \cite{Levy})}

Comparing \eqref{eq:Levy01} with \eqref{eq:char_1}, it is enough to show that
\begin{equation}\label{eq:cond_Levy}
\sup_{j\in\N}\frac{1}{\sqrt{j}}\sup_{m=0,\dots,2^j-1}|\xi_{j,m}|<\infty\quad\text{almost surely},
\end{equation}
where $\{\xi_{j,m}:j\in\N, m=0,\dots, 2^j-1\}$ are independent standard Gaussian variables.

To prove \eqref{eq:cond_Levy}, we denote by $A^N_{j}$ the event when $\sup_{m=0,\dots,2^{j}-1}|\xi_{j,m}|\ge N\sqrt{j}$ and estimate
\begin{align}\label{eq:AtN_upper1}
{\mathbb P}(A^N_{j})\le 2^j{\mathbb P}(|\omega|\ge N\sqrt{j})\le 2^j e^{-N^2j/2}, 
\end{align}
where we used the estimate from Lemma \ref{lem:tail_bounds} (i). 
Using an estimate, which resembles the approach of the Borel-Cantelli lemma and which we shall use frequently later on,
we obtain for every positive integer $N_0$,
\begin{align}
\notag{\mathbb P}\Bigl(\sup_{j\in\N}\frac{1}{\sqrt{j}}\sup_{m=0,\dots,2^j-1}|\xi_{j,m}|=\infty\Bigr)
&={\mathbb P}\Bigl(\bigcap_{N=1}^\infty\bigcup_{j=1}^\infty A^N_{j}\Bigr)\le 
{\mathbb P}\Bigl(\bigcup_{j=1}^\infty A^{N_0}_{j}\Bigr)\\
\label{eq:sim1}&\le \sum_{j=1}^\infty {\mathbb P}\bigl(A^{N_0}_{j}\bigr)
\le \sum_{j=1}^\infty 2^j e^{-N_0^2 j/2}.
\end{align}
As the last expression tends to zero if $N_0\to\infty,$ this finishes the proof of \eqref{eq:cond_Levy}.

\item {\bf Wiener paths belong to $B^{1/2}_{p,\infty}(I)$ almost surely (Ciesielski \cite{Ciesiel91})}

We restrict ourselves to $2<p<\infty$, which allows us to use Theorem \ref{thm:faber_s}. The smaller values of $p$ are then covered by the monotonicity of Besov spaces on domains with respect to the integrability parameter $p$. Again, by \eqref{eq:Levy01} and \eqref{eq:char_2}, it is enough to prove that
\begin{equation}\label{eq:char_2'}
\sup_{j\in\N}\Bigl(\sum_{m=0}^{2^j-1} 2^{-j}|\xi_{j,m}|^p\Bigr)^{1/p} < \infty\quad\text{almost surely for every}\quad 2< p<\infty .
\end{equation}

Fix $2< p<\infty$ and let $\mu_{p}=\E\,|\omega|^p$ be the $p$th absolute moment of a standard Gaussian variable $\omega$.
This time, we denote for every $t>0$ and $j\in\N$ by $A^t_j$ the event that 
\[
\frac{1}{2^j}\sum_{m=0}^{2^j-1}|\xi_{j,m}|^{p}-\mu_{p}\ge t.
\]
Then, by Markov's inequality,
\begin{align}
\notag t^2 \P(A^t_j)&=t^2 \P\biggl(\Bigl(\frac{1}{2^j}\sum_{m=0}^{2^j-1}|\xi_{j,m}|^{p}-\mu_{p}\Bigr)^2\ge t^2\biggr)
\le \E \Bigl(\frac{1}{2^j}\sum_{m=0}^{2^j-1}|\xi_{j,m}|^{p}-\mu_{p}\Bigr)^2\\
\label{eq:Ajt}&=\E\, \frac{1}{2^{2j}}\sum_{m=0}^{2^j-1}|\xi_{j,m}|^{2p}+\E\, \frac{1}{2^{2j}}\sum_{m\not=n=0}^{2^j-1}|\xi_{j,m}|^{p}|\xi_{j,n}|^{p}
-\frac{2\mu_{p}}{2^j}\E\,\sum_{m=0}^{2^j-1}|\xi_{j,m}|^{p}+\mu_{p}^2\\
\notag &=\frac{\mu_{2p}}{2^j}+\frac{2^j(2^j-1)}{2^{2j}}\mu_{p}^2-\mu_{p}^2=\frac{\mu_{2p}-\mu_{p}^2}{2^j}.
\end{align}
Similarly to \eqref{eq:sim1}, we conclude, that for every $N_0\in\N$ it holds
\begin{align*}
\P\biggl(\sup_{j\in\N_0}\frac{1}{2^j}\sum_{m=0}^{2^j-1}|\xi_{j,m}|^{p}=\infty\biggr)&=\P\Bigl(\bigcap_{N=1}^\infty\bigcup_{j=0}^\infty A_j^N\Bigr)
\le \P\Bigl(\bigcup_{j=0}^\infty A_j^{N_0}\Bigr)\le \sum_{j=0}^\infty \P\bigl(A_j^{N_0}\bigr)\\
&\le \sum_{j=0}^\infty \frac{\mu_{2p}-\mu_p^2}{2^jN_0^2}=\frac{2(\mu_{2p}-\mu_p^2)}{N_0^2}.
\end{align*}
The last expression tends to zero if $N_0\to\infty$, which renders \eqref{eq:char_2'}.

\item {\bf Wiener paths belong to $B^{1/2}_{{\Phi_2},\infty}(I)$ almost surely (Ciesielski \cite{Ciesiel93})}

This time, we need to show that
\begin{equation}\label{eq:cond_Levy3}
\sup_{j\in\N}\sup_{p\ge 1}\frac{1}{\sqrt{p}}\Bigl(\frac{1}{2^j}\sum_{m=0}^{2^j-1}|\xi_{j,m}|^p\Bigr)^{1/p}<\infty\quad \text{almost surely}. 
\end{equation}
By monotonicity, it is enough to restrict the supremum over $p$ to the integer values $p\in\N.$
Furthermore, we only need to refine the analysis done before. Indeed, it follows directly from \eqref{eq:Ajt} that
\[
\P\biggl(\frac{1}{\sqrt{p}}\Bigl(\frac{1}{2^j}\sum_{m=0}^{2^j-1}|\xi_{j,m}|^p\Bigr)^{1/p}\ge \frac{(2t)^{1/p}}{\sqrt{p}}\biggr)
\le \P(A_j^t) \le \frac{\mu_{2p}-\mu_p^2}{2^jt^2}\quad\text{for every}\quad t>\mu_p.
\]
Hence, if $N>(2\mu_p)^{1/p}/\sqrt{p}$, and if $A_j^{N,p}$ denotes the event when
\[
\frac{1}{\sqrt{p}}\Bigl(\frac{1}{2^j}\sum_{m=0}^{2^j-1}|\xi_{j,m}|^p\Bigr)^{1/p}\ge N,
\]
then
\[
\P\bigl(A_j^{N,p}\bigr)
\le \frac{\mu_{2p}-\mu_p^2}{2^{j-2}(N\sqrt{p})^{2p}}\le \frac{\mu_{2p}}{2^{j-2}(N\sqrt{p})^{2p}}.
\]
Since $\displaystyle \mu_{2p}=\frac{2^{p}\Gamma\left(p+\frac{1}{2}\right)}{\sqrt{\pi}}\le 2^p\cdot \Gamma(p+1)=2^p\cdot p!\le (2p)^p$, we obtain for every $N_0$ large enough
%using Stirling's formula for the Gamma-function
\begin{align*}
\P\biggl(&\sup_{j\in\N}\sup_{p\in\N}\frac{1}{\sqrt{p}}\Bigl(\frac{1}{2^j}\sum_{m=0}^{2^j-1}|\xi_{j,m}|^p\Bigr)^{1/p}=\infty\biggr)
=\P\Bigl(\bigcap_{N=1}^\infty\bigcup_{j,p=1}^\infty A_j^{N,p}\Bigr)\le \P\Bigl(\bigcup_{j,p=1}^\infty A_j^{N_0,p}\Bigr)\\
&\le \sum_{j,p=1}^\infty \P\bigl(A_j^{N_0,p}\bigr)
\le \sum_{j,p=1}^\infty\frac{\mu_{2p}}{2^{j-2}(N_0\sqrt{p})^{2p}}\lesssim \sum_{p=1}^\infty\frac{2^p}{N_0^{2p}}.%\frac{\Gamma(p+\frac 12)}{p^p}\\
%&\lesssim \sum_{p=1}^\infty\left(\frac{2}{N^{2}}\right)^p\frac{(p+\frac 12)^p}{(ep)^p}\leq \sum_{p=1}^\infty\left(\frac{2}{N^{2}}\right)^p.
\end{align*}
As the last expression tends to zero if $N_0\to\infty$, we obtain \eqref{eq:cond_Levy3}.
\end{enumerate}

\begin{rem}
Let us have a closer look at the relations of the different spaces we are dealing with in terms of  their embeddings.
We rely on the characterizations \eqref{eq:char_1}, \eqref{eq:char_2}, and \eqref{eq:char_3} collected in Theorem \ref{thm:function_spaces}.
Furthermore, one can show in the same way that a function $f$ with \eqref{eq:decomp_Faber1} belongs to the H\"older-Zygmund space
$B^{1/2}_{\infty,\infty}(I)$ if, and only if,
\begin{equation}\label{eq:char_4}
    \sup_{j\in\N_0}\sup_{m=0,\dots,2^{j}-1}|\lambda_{j,m}|<\infty.
\end{equation}
One can use \eqref{eq:char_4} to recover the well known fact that the Wiener paths almost surely do not belong to $B^{1/2}_{\infty,\infty}(I).$
Indeed, the supremum of $2^j$ independent standard Gaussian variables grows asymptotically as $\sqrt{j}$, cf. also Lemma \ref{lem:tail_bounds}(ii), which violates \eqref{eq:char_4}. Furthermore, \eqref{eq:char_2}, \eqref{eq:char_3}, and \eqref{eq:char_4} show that
\[
B^{1/2}_{\infty,\infty}(I)\hookrightarrow B^{1/2}_{{\Phi_2},\infty}(I)\hookrightarrow B^{1/2}_{p,\infty}(I).
\]

To compare the spaces $B^{1/2}_{{\Phi_2},\infty}(I)$ and $B^{1/2,1/2}_{\infty,\infty}(I)$ we estimate for every $j\in\N$
\begin{align*}
 \frac{1}{\sqrt{j}}\cdot\sup_{m=0,\dots,2^{j}-1}|\lambda_{j,m}|&=\frac{1}{\sqrt{j}}\cdot \|\lambda_{j,\cdot}\|_\infty\le \frac{1}{\sqrt{j}}\cdot \|\lambda_{j,\cdot}\|_{j}=
 \frac{2}{\sqrt{j}}\,\Bigl(\sum_{m=0}^{2^j-1}2^{-j}|\lambda_{j,m}|^j\Bigr)^{1/j}\\
 &\le 2\sup_{p\ge 1}\frac{1}{\sqrt{p}}\,\Bigl(\sum_{m=0}^{2^j-1}2^{-j}|\lambda_{j,m}|^p\Bigr)^{1/p}.
 \end{align*}
Therefore, $B^{1/2}_{{\Phi_2},\infty}(I) \hookrightarrow B^{1/2,1/2}_{\infty,\infty}(I)$ and the result of Ciesielski is an improvement over the result of P. L\'evy, providing a strictly smaller space, which contains almost all Wiener paths.

Finally, let us note that $B^{1/2,1/2}_{\infty,\infty}(I)$ and $B^{1/2}_{p,\infty}(I)$ with $1\le p<\infty$ are incomparable. This can again be easily seen by looking at the sequence space characterization \eqref{eq:char_1} and \eqref{eq:char_2}.
First, the special sequence $\lambda^{(1)}_{j,m}=\sqrt{j}$ for all $j\in\N$ and all $m=0,\dotsc,2^j-1$ belongs to $B^{1/2,1/2}_{\infty,\infty}(I)$ and not to $B^{1/2}_{p,\infty}(I)$ for any $1\leq p<\infty$. Second, the sequence
\[
\lambda^{(2)}_{j,m}=\begin{cases} j, \quad &\text{for}\ j\in\N\ \text{and}\ m=0,\\
0, \quad &\text{for}\ j\in\N\ \text{and}\ m\ge 1
\end{cases}
\]
belongs to $B^{1/2}_{p,\infty}(I)$ for all $1\leq p<\infty$ but not to $B^{1/2,1/2}_{\infty,\infty}(I)$.
\end{rem}

\subsection{An alternative proof for the Besov-Orlicz space $B^{1/2}_{\Phi_2,\infty}(I)$}\label{sub:2.4}
We use the characterization given in Theorem \ref{thm:Orlicz} to re-prove the result of Ciesielski \cite{Ciesiel93}, i.e., to show that the Wiener paths lie 
almost surely in the Besov-Orlicz space $B^{1/2}_{\Phi_2,\infty}(I)$. Comparing \eqref{eq:Levy01} with \eqref{eq:decomp_Faber1} we observe, that it is enough to show that
\[
\left\|\xi|b_{\Phi_2,\infty}^+(I)\right\|=\sup_{j\in\N_0}\left\|\sum_{m=0}^{2^j-1}\xi_{j,m}\chi_{j,m}(\cdot)\right\|_{\Phi_2}<\infty
\]
almost surely.  Here,  $\xi=\{\xi_{j,m},j\in\N_0,0\le m\le 2^j-1\}$ is again a sequence of i.i.d. standard Gaussian variables and
$\chi_{j,m}$ denotes the characteristic function of the interval $I_{j,m}$.

Therefore, we put for every integer $j\ge 0$
\begin{equation}\label{eq:Def_fj}
f_j(t)=\sum_{m=0}^{2^j-1}\xi_{j,m}\chi_{j,m}(t),\quad 0<t< 1,
\end{equation}
and observe that its non-increasing rearrangement is given by 
\[
f_j^{\ast}(t)=\sum_{m=0}^{2^j-1}\left(\xi_{j}\right)^{\ast}_{m+1}\chi_{j,m}(t),\quad 0<t< 1,
\]
where $\left(\left(\xi_{j}\right)^{\ast}_m\right)_{m=1}^{2^j}$ is the non-increasing rearrangement of the sequence  $\xi_j=(|\xi_{j,m}|)_{m=0}^{2^j-1}$.

This allows us to calculate for $j\in \N$ fixed 
\begin{align}\notag
\sup_{0<t<1}\frac{f_j^*(t)}{\sqrt{\log(1/t)+1}}&=\sup_{m=1,\dots,2^j}\frac{f_j^*(m2^{-j})}{\sqrt{\log(2^{j}/m)+1}}\\
\label{eq:flog_bound}&=\sup_{k=0,\dots,j-1}\sup_{2^k\le m\le 2^{k+1}}\frac{f_j^*(m2^{-j})}{\sqrt{\log(2^{j}/m)+1}}\\
&\notag\le \sup_{k=0,\dots,j-1}\frac{f_j^*(2^{k-j})}{\sqrt{\log(2^{j-k-1})+1}}\le
c\sup_{0\le k<j}\frac{f_j^*(2^{k-j})}{\sqrt{j-k}}\\
\notag&=c\sup_{0\le k<j}\frac{\left(\xi_{j}\right)^{\ast}_{2^k}}{\sqrt{j-k}}. 
\end{align}
%where $m\text{-}\max(x_1,\dots,x_n)$ is the $m$-th largest entry of $x_1,\dots,x_n$ for $1\le m\le n$.
Next, we denote by $A_{j,k}^K$ the event that
$
\left(\xi_{j}\right)^{\ast}_{2^k} \ge K\sqrt{j-k}
$
and use Lemma \ref{lem:tail_bounds} to estimate $\P(A_{j,k}^K)$.
%For the proof, it is now enough to observe that the sum of the last bound is finite for $0\le k<j<\infty$ if $K$ is large enough. 
We conclude, that for every $K_0\in\N$ large enough such that $16 e^{-K_0^2/2}<1$ it holds
\begin{align*}
\P\biggl( & \sup_{0<t<1}\sup_{j\in\N_0}\frac{f_j^*(t)}{\sqrt{\log(1/t)+1}}=\infty\biggr)
=\P\Bigl(\bigcap_{K=1}^\infty\bigcup_{0\leq k<j<\infty} A_{j,k}^K\Bigr)
\le \P\Bigl(\bigcup_{0\leq k<j<\infty} A_{j,k}^{K_0}\Bigr)\\
&\le \sum_{0\leq k<j<\infty} \P\bigl(A_{j,k}^{K_0}\bigr)\le 
 \sum_{0\leq k<j<\infty}\Bigl(2e^{-K_0^2/2}\Bigr)^{(j-k)2^k}\cdot {e}^{2^k}\\
&= \sum_{k=0}^{\infty}\sum_{l=1}^{\infty}\Bigl(2e^{-K_0^2/2}\Bigr)^{l2^k}\cdot {e}^{2^k}
= \sum_{k=0}^{\infty}{e}^{2^k}\sum_{l=1}^{\infty}\left(\Bigl(2e^{-K_0^2/2}\Bigr)^{2^k}\right)^{l} \\
&\le c\sum_{k=0}^\infty e^{2^k}\Bigl(2e^{-K_0^2/2}\Bigr)^{2^k}\le
c\sum_{k=0}^\infty \Bigl(8e^{-K_0^2/2}\Bigr)^{k+1}
\le 2c\cdot 8e^{-K_0^2/2}.
\end{align*}

The last expression tends to zero if $K_0\to\infty$, which yields the desired result. 

\subsection{New function spaces}\label{sec:2.5}

The approach to regularity of Wiener paths presented in Sections \ref{sub:2.3} and \ref{sub:2.4} follows actually a rather
straightforward pattern. If a certain function space under consideration allows for an equivalent characterization
in terms of the Faber system, then this might be combined directly with Theorem \ref{thm:LevyDecomp}. The proof that almost all
Wiener paths lie in this space then reduces to a statement about independent standard Gaussian variables.

In this section we show, that this approach can
 be further developed to introduce even smaller spaces than $B^{1/2}_{\Phi_2,\infty}(I)$,
where the Wiener paths lie in almost surely.

\subsubsection{Spaces of Besov type - discrete averages of differences}

By Definition \ref{def:orlicz_besov} and Theorem \ref{thm:Orlicz}, a continuous function $f\in C(I)$ lies in the Besov-Orlicz space
$B_{\Phi_2,\infty}^{1/2}(I)$ if, and only if, its decomposition into the Faber system \eqref{eq:decomp_Faber1} satisfies
\begin{equation*}
\sup_{j\in\N_0} \sup_{0<t<1}\frac{f^*_j(t)}{\sqrt{\log(1/t)+1}}<\infty,\quad
\end{equation*}
where
\begin{equation}\label{eq:new_besov01}
f_j(t)=\sum_{m=0}^{2^j-1}\lambda_{j,m}\chi_{j,m}(t),\quad 0<t< 1.
\end{equation}
This condition quantifies very precisely the possible size of the components $f_j$, but (due to the use of the rearrangements $f_j^*$)
it fails to describe the distribution of large values of $f_j$ on $[0,1]$.
For example, it does not exclude the possibility that the large values of the components $f_j$ appear close to each other.
But this is actually unlikely for the Wiener process because in that case, the $\lambda_{j,m}$'s get replaced by independent Gaussian variables.

Therefore, we introduce new function spaces, where we measure the size of the averages
\[
\frac{1}{t-s}\int_s^t f_j(u)du\quad \text{or}\quad \frac{1}{t-s}\int_s^t |f_j(u)|du
\]
and we expect them to be much smaller (in $L_\infty$ or $L_{\Phi_2}$-norm) than $f_j$ itself.
To describe this idea mathematically, we define for an integrable function $g$ on $I$ the averaging operators
\begin{equation}\label{eq:AV1}
(A_kg)(x)=\sum_{l=0}^{2^k-1}2^k\int_{l2^{-k}}^{(l+1)2^{-k}}g(t)dt\cdot\chi_{k,l}(x)\quad\text{and}\quad (\widetilde A_kg)(x)=(A_k(|g|))(x),\quad x\in I.
\end{equation}

By what we outlined so far, we expect   the $k$th dyadic average $A_k$ of the $j$th dyadic level of  L\'evy's decomposition
\eqref{eq:Levy01}   to be much smaller in the $L_{\Phi_2}$-norm than the $j$th dyadic level itself. This paves the way to the following definition.
\begin{dfn}
Let $\varepsilon>0$. Then the space $A(\varepsilon)$ is the collection of all $f\in C(I)$, which satisfy the following condition.
If \eqref{eq:decomp_Faber1} is the decomposition of $f$ into the Faber system and $f_j$, $j\in \N_0$, is defined by \eqref{eq:new_besov01}, then
\[
\|f\|_{A(\varepsilon)}:=\sup_{j\in\N_0}\sup_{0\le k\le j} \sup_{0<t<1} \frac{2^{(j-k)/2}}{(j-k+1)^\varepsilon}\cdot\frac{(A_kf_j)^*(t)}{\sqrt{\log(1/t)+1}}
\approx \sup_{j\in\N_0}\sup_{0\le k\le j} \frac{2^{(j-k)/2}}{(j-k+1)^\varepsilon}\cdot \|A_kf_j\|_{\Phi_2}<\infty.
\]
\end{dfn}

With this definition we are now able to prove the following.

\begin{thm}\label{thm:Aepsilon}
Let $\varepsilon>0$. Then $\|W_{\cdot}\|_{A(\varepsilon)}<\infty$ almost surely.
\end{thm}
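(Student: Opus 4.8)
The plan is to combine L\'evy's decomposition (Theorem~\ref{thm:LevyDecomp}) with the definition of $A(\varepsilon)$, exactly as in the proofs of the results of L\'evy and Ciesielski above. By \eqref{eq:Levy01}, the coefficients $\lambda_{j,m}$ in the Faber decomposition of $W_\cdot$ equal the independent standard Gaussian variables $\xi_{j,m}$, so $f_j=\sum_{m=0}^{2^j-1}\xi_{j,m}\chi_{j,m}$. Using the equivalent formulation of the $A(\varepsilon)$-norm in terms of $\|A_kf_j\|_{\Phi_2}$, it suffices to show that
\[
\sup_{j\in\N_0}\sup_{0\le k\le j}\frac{2^{(j-k)/2}}{(j-k+1)^\varepsilon}\,\|A_kf_j\|_{\Phi_2}<\infty\quad\text{almost surely.}
\]
First I would compute $A_kf_j$ explicitly. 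Since $f_j$ is constant on each $I_{j,m}$ and $k\le j$, averaging over $I_{k,l}$ replaces the $2^{j-k}$ values $\xi_{j,m}$ falling into $I_{k,l}$ by their arithmetic mean. Thus $A_kf_j=\sum_{l=0}^{2^k-1}\eta_{k,l}^{(j)}\chi_{k,l}$, where each $\eta_{k,l}^{(j)}=2^{-(j-k)}\sum_{m}\xi_{j,m}$ is, by the $2$-stability of the normal distribution (Lemma~\ref{2-stability}), a centered Gaussian of variance $2^{-(j-k)}$, and the $\eta_{k,l}^{(j)}$ (over $l$) are independent. Equivalently, $2^{(j-k)/2}\eta_{k,l}^{(j)}$ are $2^k$ independent standard Gaussians.

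\textbf{The core estimate.} With this rescaling, the quantity to control becomes
\[
\frac{2^{(j-k)/2}}{(j-k+1)^\varepsilon}\,\|A_kf_j\|_{\Phi_2}
=\frac{1}{(j-k+1)^\varepsilon}\Bigl\|\sum_{l=0}^{2^k-1}\zeta^{(j,k)}_{k,l}\chi_{k,l}\Bigr\|_{\Phi_2},
\]
where $\zeta^{(j,k)}_{k,l}=2^{(j-k)/2}\eta^{(j)}_{k,l}$ are $2^k$ independent standard Gaussians. The inner Orlicz norm is precisely the object already estimated in Section~\ref{sub:2.4} (and in the proof of the Ciesielski result): the $L_{\Phi_2}$-norm of a step function built from $2^k$ independent standard Gaussians behaves like $\sqrt{k}$ with high probability, since it is comparable to $\sup_{0\le i<k}(\zeta^*)_{2^i}/\sqrt{k-i}$ via \eqref{eq:flog_bound}. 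The decisive improvement over the plain Besov-Orlicz result is the extra factor $(j-k+1)^{-\varepsilon}$: writing $r=j-k$, the index $j$ ranges over $k,k+1,\dots$, and for fixed $k$ we must take a supremum over $r\ge 0$ of $(r+1)^{-\varepsilon}\|\,\cdot\,\|_{\Phi_2}$. The plan is to set up, for each pair $(k,j)$, the event
\[
A^{K}_{k,j}=\Bigl\{\,(r+1)^{-\varepsilon}\bigl\|\textstyle\sum_l \zeta^{(j,k)}_{k,l}\chi_{k,l}\bigr\|_{\Phi_2}\ge K\,\Bigr\},\qquad r=j-k,
\]
and to bound $\P(A^K_{k,j})$ by the tail estimates of Lemma~\ref{lem:tail_bounds} applied to the ordered Gaussians $(\zeta^*)_{2^i}$, in the same fashion as the chain of inequalities in Section~\ref{sub:2.4} that produced bounds of the form $(2e^{-K^2/2})^{(k-i)2^i}e^{2^i}$.

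\textbf{Summability and the Borel--Cantelli step.} The final step is the union bound over all admissible $(k,j)$, i.e.\ over all $k\in\N_0$ and all $r\ge 0$, following the template
\[
\P\Bigl(\sup_{j,k}(\cdots)=\infty\Bigr)=\P\Bigl(\bigcap_{K}\bigcup_{k,j}A^K_{k,j}\Bigr)\le\P\Bigl(\bigcup_{k,j}A^{K_0}_{k,j}\Bigr)\le\sum_{k,j}\P\bigl(A^{K_0}_{k,j}\bigr),
\]
and showing the last double sum tends to $0$ as $K_0\to\infty$. The main obstacle I anticipate is that the number of levels $j$ contributing at a fixed averaging scale $k$ is now infinite (all $r\ge0$), whereas in the Besov-Orlicz proof there was effectively one constraint per $j$; convergence of the $r$-sum is exactly what the factor $(r+1)^{-\varepsilon}$ is designed to supply. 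Concretely, for fixed $k$ the threshold on $\|\cdot\|_{\Phi_2}$ is $K_0(r+1)^\varepsilon$, so the Gaussian tail estimate of Lemma~\ref{lem:tail_bounds}(i) gives, after passing through \eqref{eq:flog_bound}, a bound roughly of order $\exp(-c\,K_0^2(r+1)^{2\varepsilon}2^i)$ summed over the relevant $i<k$; the point is that even the mild polynomial growth $(r+1)^{2\varepsilon}$ in the exponent renders the sum over $r\ge0$ finite and, crucially, summable after the remaining sum over $k$ as well. I would verify that the geometric-type decay in $k$ (coming from the $e^{2^i}$ versus $(2e^{-K_0^2/2})^{\cdots}$ competition already seen in Section~\ref{sub:2.4}) survives the insertion of the $(r+1)^{2\varepsilon}$ factor, and that choosing $K_0$ large makes the whole expression arbitrarily small. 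Any $\varepsilon>0$ should suffice, since the factor only needs to beat a polynomially growing number of terms at each scale.
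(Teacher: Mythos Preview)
Your proposal is correct and follows essentially the same route as the paper: reduce via L\'evy's decomposition, observe by $2$-stability that $2^{(j-k)/2}A_kf_j$ is equidistributed with $f_k$, then control $\|f_k\|_{\Phi_2}$ through \eqref{eq:flog_bound} and Lemma~\ref{lem:tail_bounds}, and finish with a union bound in which the factor $(j-k+1)^\varepsilon$ secures summability over $r=j-k$. The only detail you do not make explicit, and which the paper singles out, is that the case $k=0$ needs to be treated separately (since \eqref{eq:flog_bound} is vacuous there and one instead bounds a single Gaussian directly); this is routine and does not affect your plan.
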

\begin{proof}
To estimate the norm of the Brownian paths in $A(\varepsilon)$, we use the decomposition of $W_t$ into the Faber system \eqref{eq:Levy01}.
We therefore replace \eqref{eq:new_besov01} by
\begin{equation*}%\label{eq:new_besov02}
f_j(t)=\sum_{m=0}^{2^j-1}\xi_{j,m}\chi_{j,m}(t),\quad 0<t< 1,
\end{equation*}
where $\xi_j=(\xi_{j,m})_{m=0}^{2^j-1}$ is again a vector of independent standard Gaussian variables.

Let $0\le k\le j$ and $x\in I$. Then
\begin{align*}
    2^{(j-k)/2}A_kf_j(x)&=2^{(j-k)/2}\sum_{l=0}^{2^k-1}2^k\int_{l\cdot 2^{-k}}^{(l+1)2^{-k}}\Biggl(\sum_{m=0}^{2^j-1}\xi_{j,m}\chi_{j,m}(t)\Biggr)dt\cdot \chi_{k,l}(x)\\
    &=\sum_{l=0}^{2^k-1}2^{(j+k)/2}\Biggl(\sum_{m=0}^{2^j-1}\xi_{j,m}|I_{k,l}\cap I_{j,m}|\Biggr)\chi_{k,l}(x)\\
    &=\sum_{l=0}^{2^k-1}2^{(k-j)/2}\Biggl(\sum_{m=l\cdot 2^{j-k}}^{(l+1)\cdot 2^{j-k}-1}\xi_{j,m}\Biggr)\chi_{k,l}(x).
\end{align*}
By the 2-stability of Gaussian variables, cf. Lemma \ref{2-stability}, it follows that $2^{(j-k)/2}A_kf_j$ is equidistributed with $f_k$. Therefore,
\begin{align*}
\P(\|W_{\cdot}\|_{A(\varepsilon)}\ge K_0)&\le \sum_{0\le k\le j} \P\left(\frac{2^{(j-k)/2}}{(j-k+1)^\varepsilon}\cdot\sup_{0<t<1}\frac{(A_kf_j)^*(t)}{\sqrt{\log(1/t)+1}}\ge K_0\right)\\
&= \sum_{0\le k\le j} \P\left(\sup_{0<t<1}\frac{(f_k)^*(t)}{\sqrt{\log(1/t)+1}}\ge K_0\cdot (j-k+1)^\varepsilon\right)=I_1+I_2,
\end{align*}
where $I_1$ collects the terms with $0=k\le j$ and $I_2$ includes the terms with $1\le k\le j$.

The estimate of $I_1$ is rather straightforward
\begin{align*}
I_1\le \sum_{j=0}^\infty \P\Bigl(f_0^*(1)\ge K_0\cdot (j+1)^\varepsilon\Bigr)
\le \sum_{j=0}^\infty \exp\Bigl(-\frac{K_0^2(j+1)^{2\varepsilon}}{2}\Bigr)
\end{align*}
and this expression tends to zero if $K_0$ grows to infinity.

Using \eqref{eq:flog_bound} and Lemma \ref{lem:tail_bounds}, we may estimate $I_2$ as follows
\begin{align*}
I_2&\le \sum_{0\le m< k\le j} \P\left(\frac{(\xi_k)^*_{2^m}}{\sqrt{k-m}}\ge c_1 K_0\cdot (j-k+1)^\varepsilon\right)\\
&\le \sum_{0\le m< k\le j}c\,e^{2^m}\Bigl(2\exp(-c_1^2K_0^2(j-k+1)^{2\varepsilon})\Bigr)^{(k-m)2^m}\\
&= \sum_{0\le m< k}c\,e^{2^m}2^{(k-m)2^m}\sum_{l=1}^\infty\Bigl(\exp(-c_1^2K_0^2(k-m)2^m)\Bigr)^{l^{2\varepsilon}}.
\end{align*}
We assume that $K_0$ is large enough to ensure $8\exp(-c_1^2K_0^2)<1/2$ and obtain
\begin{align*}
I_2&\le c_\varepsilon\sum_{0\le m<k} e^{2^m}2^{(k-m)2^m}\exp(-c_1^2K_0^2(k-m)2^m)\\
&=c_\varepsilon\sum_{m=0}^\infty e^{2^m}\sum_{\nu=1}^\infty \Bigl(2^{2^m}\exp(-c_1^2K_0^22^m)\Bigr)^{\nu}\le 2c_\varepsilon \sum_{m=0}^\infty 8^{2^m} \exp(-c_1^2K_0^22^m)\\
&\le 32 c_\varepsilon \exp(-c_1^2K_0^2),
\end{align*}
which tends again to zero if $K_0$ grows to infinity.
\end{proof}

A similar result can be obtained if we use the absolute averaging operators $\widetilde A_k$ instead of $A_k$.
The decay of the averages $\widetilde A_k(f_j)$ will now be described by the Orlicz spaces $L_{\Phi_{2,A}}$,
which are defined in \eqref{eq:Orlicz_new1}, also cf.  Theorem \ref{thm:OrliczA}.

\begin{dfn}\label{dfn:NewSpaceB1}
The space $\widetilde A$ is the collection of all $f\in C(I)$, which satisfy the following condition.
If \eqref{eq:decomp_Faber1} is the decomposition of $f$ into the Faber system and $f_j$, $j\in \N_0$, is defined by \eqref{eq:new_besov01}, then
\[
\|f\|_{\widetilde A}:=\sup_{j\in\N_0}\sup_{0\le k\le j} \sup_{0<t<1} \frac{(\widetilde A_kf_j)^*(t)}{\sqrt{2^{k-j}\log(1/t)+1}}
\approx \sup_{j\in\N_0}\sup_{0\le k\le j} \|{\widetilde A}_kf_j\|_{2,2^{k-j}}<\infty.
\]
\end{dfn}

With this we can now state and  prove the following statement. 

\begin{thm}\label{thm:13}
It holds that $\|W_{\cdot}\|_{\widetilde A}<\infty$ almost surely.
\end{thm}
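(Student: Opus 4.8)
The plan is to follow the same scheme as in the proofs of Theorem \ref{thm:Aepsilon} and of the result in Section \ref{sub:2.4}: insert L\'evy's decomposition \eqref{eq:Levy01} into the definition of $\|\cdot\|_{\widetilde A}$, compute the averaged step functions explicitly, and reduce the almost sure finiteness to a concentration statement about independent Gaussians, which is then combined with a Borel--Cantelli type estimate as in \eqref{eq:sim1}. Replacing the coefficients $\lambda_{j,m}$ by the standard Gaussians $\xi_{j,m}$ from Theorem \ref{thm:LevyDecomp}, we set $f_j=\sum_{m=0}^{2^j-1}\xi_{j,m}\chi_{j,m}$ as in \eqref{eq:new_besov01}. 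For $0\le k\le j$, a direct computation with \eqref{eq:AV1} shows that $\widetilde A_kf_j=A_k(|f_j|)$ is constant on each dyadic interval $I_{k,l}$, with value
\[
\eta_{k,l}:=2^{k-j}\sum_{m=l2^{j-k}}^{(l+1)2^{j-k}-1}|\xi_{j,m}|,\qquad l=0,\dots,2^k-1,
\]
so that the $\eta_{k,l}$ are independent, each being the arithmetic mean of $N:=2^{j-k}$ independent copies of $|\xi|$, $\xi\sim\mathcal N(0,1)$. In contrast to Theorem \ref{thm:Aepsilon}, the absolute values destroy the $2$-stability, and the $\eta_{k,l}$ now concentrate around the positive mean $\mu_1=\E|\xi|=\sqrt{2/\pi}$ with fluctuations of order $N^{-1/2}=2^{(k-j)/2}$; this is exactly what the weight $\sqrt{2^{k-j}\log(1/t)+1}$ in Definition \ref{dfn:NewSpaceB1} is designed to capture.

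Next I would turn the norm $\|\widetilde A_kf_j\|_{2,2^{k-j}}$ into a supremum over order statistics. Since $\widetilde A_kf_j$ is a step function constant on the intervals $I_{k,l}$, its non-increasing rearrangement satisfies $(\widetilde A_kf_j)^*(m2^{-k})=(\eta_k)^*_m$, where $(\eta_k)^*_m$ denotes the $m$-th largest of the values $\eta_{k,0},\dots,\eta_{k,2^k-1}$. Using the equivalence from Theorem \ref{thm:OrliczA} and grouping the index $m$ dyadically exactly as in \eqref{eq:flog_bound}, I obtain, up to a universal constant,
\[
\|\widetilde A_kf_j\|_{2,2^{k-j}}\le c\sup_{0\le r<k}\frac{(\eta_k)^*_{2^r}}{\sqrt{2^{k-j}(k-r)+1}},
\]
while the degenerate case $k=0$ contributes only the single average $\eta_{0,0}=2^{-j}\sum_m|\xi_{j,m}|$. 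Consequently it suffices to show that
\[
\sup_{j\in\N_0}\ \sup_{1\le k\le j}\ \sup_{0\le r<k}\frac{(\eta_k)^*_{2^r}}{\sqrt{2^{k-j}(k-r)+1}}<\infty\quad\text{a.s.},
\]
together with $\sup_{j}\eta_{0,0}<\infty$ a.s.; the latter is immediate from the concentration bound below and a geometric summation over $j$, and plays the role of the term $I_1$ in the proof of Theorem \ref{thm:Aepsilon}.

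The heart of the argument is the tail estimate of the order statistic $(\eta_k)^*_{2^r}$. Writing $s=c\,K_0\sqrt{2^{k-j}(k-r)+1}$ with a suitable constant $c>0$ and bounding the event $\{(\eta_k)^*_{2^r}\ge s\}$ by the event that at least $2^r$ of the $2^k$ independent variables $\eta_{k,l}$ exceed $s$, a union bound over the $\binom{2^k}{2^r}\le(e\,2^{k-r})^{2^r}$ choices gives
\[
\P\bigl((\eta_k)^*_{2^r}\ge s\bigr)\le e^{2^r}\Bigl(2^{k-r}\,\P(\eta\ge s)\Bigr)^{2^r},\qquad \eta=\tfrac1N\textstyle\sum_{i=1}^N|\xi_i|,
\]
just as in the derivations leading to \eqref{eq:flog_bound}. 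The required input is a concentration inequality for the average of absolute Gaussians: since each $|\xi_i|$ is subgaussian and $s\ge cK_0>2\mu_1$ for $K_0$ large, a Bernstein/Hoeffding-type bound (cf. Section \ref{sec:app}) yields $\P(\eta\ge s)\le\exp(-c'Ns^2)$, and because $Ns^2=c^2K_0^2\bigl((k-r)+2^{j-k}\bigr)$ this factorizes as $\exp(-c''K_0^2(k-r))\exp(-c''K_0^22^{j-k})$. Substituting $a=k-r\ge1$ and $b=j-k\ge0$, each probability is then dominated by $e^{2^r}\bigl[(2e^{-c''K_0^2})^a e^{-c''K_0^22^b}\bigr]^{2^r}$, and summing first the geometric series in $a$ and in $b$ and then over $r$ shows that the total probability is bounded by $C\,(2e^{1-2c''K_0^2})\to0$ as $K_0\to\infty$, in complete analogy with \eqref{eq:sim1}. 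The main obstacle is precisely this concentration step --- controlling $\P(\eta\ge s)$ with the correct Gaussian rate in $N$ down to thresholds $s$ of order one --- since here, unlike in Theorem \ref{thm:Aepsilon}, one cannot invoke $2$-stability to identify the law of the averaged level; the remaining bookkeeping of the triple series is routine.
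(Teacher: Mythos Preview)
Your proposal is correct and follows essentially the same route as the paper's proof: the paper also writes $\widetilde A_kf_j=\sum_{l}\nu_{k,l}\chi_{k,l}$ with $\nu_{k,l}\sim\mathcal G_{2^{j-k}}$ (your $\eta_{k,l}$), reduces $\|\widetilde A_kf_j\|_{2,2^{k-j}}$ to a supremum over dyadic order statistics via the analogue of \eqref{eq:flog_bound}, and then uses the concentration inequality for $\mathcal G_N$-variables from Lemma~\ref{lem-prop-gv} (your Bernstein/Hoeffding step) together with the same triple geometric summation. The only cosmetic difference is that the paper invokes \eqref{eq:GN1} directly, whereas you assemble the order-statistic bound from the single-variable tail by hand; also, your separate treatment of the case $k=0$ makes explicit a step the paper leaves implicit.
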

\begin{proof}
By its construction,
\[
\widetilde A_kf_j(x)=\sum_{m=0}^{2^k-1}\nu_{k,m}\chi_{k,m}(x),\quad x\in[0,1],
\]
where $\nu_k=(\nu_{k,0},\dots,\nu_{k,2^k-1})$ is a vector of independent variables from ${\mathcal G}_{2^{j-k}}$, see Definition \ref{dfn:G_N}. Similarly to \eqref{eq:flog_bound}, we obtain
\begin{align*}
\sup_{0<t<1} &\frac{(\widetilde A_kf_j)^*(t)}{\sqrt{2^{k-j}\log(1/t)+1}}=\sup_{m=1,\dots,2^k} \frac{(\widetilde A_kf_j)^*(m2^{-k})}{\sqrt{2^{k-j}\log(2^k/m)+1}}\\
&\le c\,\sup_{z=0,\dots,k-1}\frac{(\widetilde A_kf_j)^*(2^{z-k})}{\sqrt{2^{k-j}(k-z-1)+1}}%\\&
=c\,\sup_{z=0,\dots,k-1}\frac{(\nu_k^*)_{2^{z}}}{\sqrt{2^{k-j}(k-z-1)+1}}.
\end{align*}

Then, for $K_0$ large enough using the estimate above and \eqref{eq:GN1}, we obtain
\begin{align*}
\P(\|W_{\cdot}\|_{\widetilde A}\ge K_0)&\le \sum_{0\le k\le j}\P\biggl(\sup_{0<t<1}\frac{(\widetilde A_kf_j)^*(t)}{\sqrt{2^{k-j}\log(1/t)+1}}\ge K_0\biggr)\\
&\le \sum_{0\le z<k\le j}\P\biggl(\frac{(\nu_k^*)_{2^{z}}}{\sqrt{2^{k-j}(k-z-1)+1}}\ge cK_0\biggr)\\
&\le \sum_{0\le z<k\le j} \biggl[e\cdot 2^{k-z}\exp(-(2^{k-j}(k-z-1)+1)c^2K_0^22^{j-k}/4)\biggr]^{2^z}\\
&=\sum_{z=0}^\infty e^{2^z}\sum_{k=z+1}^\infty 2^{(k-z)2^z}\exp\Bigl[-(k-z-1)c^2K_0^2/4\cdot 2^z\Bigr]\sum_{j=k}^\infty\exp\Bigl[-2^{j-k}c^2K_0^2/4\cdot 2^z\Bigr]\\
&\le 2\sum_{z=0}^\infty e^{2^z}\sum_{k=z+1}^\infty 2^{(k-z)2^z}\exp\Bigl[-(k-z-1)c^2K_0^2/4\cdot 2^z\Bigr]\exp\Bigl[-c^2K_0^2/4\cdot 2^z\Bigr]\\
&= 2\sum_{z=0}^\infty e^{2^z}\sum_{k=z+1}^\infty \biggl(2^{2^z}\exp\Bigl[-c^2K_0^2/4\cdot 2^z\Bigr]\biggr)^{k-z}\\
&\le 4\sum_{z=0}^\infty e^{2^z}\cdot 2^{2^z}\exp\Bigl[-c^2K_0^2/4\cdot 2^z\Bigr]\le 4\sum_{z=0}^\infty \Bigl(8e^{-c^2K_0^2/4}\Bigr)^{2^z}
\le 8\cdot 8e^{-c^2K_0^2/4}.
\end{align*}
As the last expression tends to zero if $K_0\to\infty$, this finishes the proof.
\end{proof}
\begin{rem}
Setting $j=k$ in the definition of $A(\varepsilon)$ or $\widetilde A$ and observing that $A_jf_j=f_j$ and $\widetilde A_j f_j=|f_j|$, we conclude that $A(\varepsilon)$ and $\widetilde A$ are both subsets of the space $B^{1/2}_{\Phi_2,\infty}(I)$ considered by  Ciesielski.\\
On the other hand, if we put
\[
\lambda_{j,m}=\sqrt{\log\Bigl(\frac{2^j}{m+1}\Bigr)+1},\quad j\in\N_0,\quad m=0,\dots,2^j-1, 
\]
and define $f_j$ by \eqref{eq:new_besov01}, then it follows from  $(A_kf_j)(t)=(\widetilde{A}_kf_j)(t)\geq f_k(t)$ that $A(\varepsilon)$ and $\widetilde A$ are proper subsets of $B^{1/2}_{\Phi_2,\infty}(I)$.
\end{rem}

\subsubsection{Spaces of Besov type - continuous averages of differences}

The function space $\tilde A$ introduced in Definition \ref{dfn:NewSpaceB1} is somehow difficult to handle. In order to decide whether a continuous function $f$
belongs to $\tilde A$, we first have   to construct its Faber decomposition \eqref{eq:decomp_Faber1} and the sequence $\{f_j\}_{j=0}^\infty$, cf. \eqref{eq:new_besov01}. Afterwards, we need to apply the
averaging operators $\widetilde A_k$ of \eqref{eq:AV1} and, finally, we have to measure the size of $\widetilde A_k f_j$ in the corresponding Orlicz space $L_{\Phi_2}(I)$.

Therefore, we investigate if $\widetilde A$ could be possibly replaced by a space which is defined more directly, without the detour through the Faber system decomposition.
For this, we first note  that by Theorem \ref{thm:OrliczA}
\[
\|f\|_{\widetilde A}\asymp\sup_{0\le k\le j} \|\widetilde A_kf_j\|_{2,2^{k-j}},
\]
where $\|\cdot\|_{2,2^{k-j}}$ is the Orlicz norm introduced in \eqref{eq:OrliczA}.
%But we would like to describe the regularity of Wiener paths by a space, whose norm does not require to build the $f_j$'s and to apply $\widetilde{A_k}$.
To avoid the use of $f_j$ and $\widetilde A_k$, we observe that
\[
f_{j}(t)=\sum_{m=0}^{2^j-1}\lambda_{j,m}\chi_{j,m}(t)\quad \text{and}\quad \lambda_{j,m}=-2^{j/2}(\Delta^2_{2^{-j-1}}f)(m\cdot 2^{-j})
\]
gives for $0\le k\le j$
\begin{align*}
\widetilde A_k f_j(x) &=\sum_{l=0}^{2^k-1}2^k\int_{I_{k,l}}|f_j(t)|dt\cdot \chi_{k,l}(x)\\
&=\sum_{l=0}^{2^k-1}\chi_{k,l}(x)\sum_{m=0}^{2^j-1}2^k\cdot|\lambda_{j,m}|\cdot|I_{k,l}\cap I_{j,m}|\\
&=2^{j/2}\sum_{l=0}^{2^k-1}\chi_{k,l}(x)\cdot \frac{1}{2^{j-k}}\sum_{m=l\cdot 2^{j-k}}^{(l+1)2^{j-k}-1}|(\Delta^2_{2^{-j-1}}f)(m\cdot 2^{-j})|.
\end{align*}
We now replace the discrete averages of second order differences by the continuous averages. Before we come to that, we need to complement \eqref{differences} by the differences restricted to $I$ and set 
\begin{equation}\label{eq:seconddiffI}
\Delta^2_hf(x)=\begin{cases} f(x+2h)-2f(x+h)+f(x)\quad &\text{if}\  \{x,x+h,x+2h\}\subset I,\\
0\quad &\text{otherwise}.
\end{cases}
\end{equation}

This paves the way for the following defintion. 

\begin{dfn}\label{def:DD}
Let $f\in C(I)$.
\begin{enumerate}
\item Then we define for every $0\le k\le j$ and every $x\in I$
\begin{align*}
D^2_{j,k}f(x)&:=\sum_{l=0}^{2^k-1}\chi_{k,l}(x)\frac{1}{2^{-k}}\int_{l\cdot 2^{-k}}^{(l+1)2^{-k}}|\Delta^2_{2^{-j-1}}f(t)|dt,
\intertext{and}
{\mathfrak D}^2_{j,k}f(x)&:=\frac{1}{2^{-k}}\int_{x-2^{-k-1}}^{x+2^{-k-1}}|\Delta^2_{2^{-j-1}}f(t)|dt.
\end{align*}
\item
We define
\begin{equation}\label{eq:NewNorm01}
\|f\|_{D}=\sup_{j\in\N_0}\sup_{0\le k\le j}\left\|2^{j/2}D^2_{j,k}f\right\|_{2,2^{k-j}}
\end{equation}
and
\begin{equation}\label{eq:NewNorm2}
\|f\|_{{\mathfrak D}}=\sup_{j\in\N_0}\sup_{0\le k\le j}\left\|2^{j/2}{\mathfrak D}^2_{j,k}f\right\|_{2,2^{k-j}}.
\end{equation}
\end{enumerate}
\end{dfn}

Now we are in a position to state and prove the following.

\begin{thm} \label{thm:15}
\begin{enumerate}
\item[(i)] Let $f\in C(I)$. Then $\|f\|_{D}\approx \|f\|_{\mathfrak{D}}$.
\item[(ii)] $\|W_\cdot\|_{D}<\infty$ almost surely.
\item[(iii)] $\|W_\cdot\|_{\mathfrak{D}}<\infty$ almost surely.
\end{enumerate}
\end{thm}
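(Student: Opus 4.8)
The plan is to treat part (i) as a purely deterministic comparison, and then to obtain parts (ii) and (iii) from a single probabilistic estimate. Since (i) asserts $\|f\|_{D}\approx\|f\|_{\mathfrak D}$ for every $f\in C(I)$, in particular for $f=W_\cdot$, it suffices to bound one of the two norms almost surely; I would prove (ii) directly and then deduce (iii) from it through (i).

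For (i) I would fix $j,k$ and set $g=|\Delta^2_{2^{-j-1}}f|\ge 0$, so that $D^2_{j,k}f$ is the average of $g$ over the dyadic intervals $I_{k,l}$, while $\mathfrak D^2_{j,k}f(x)$ is the average of $g$ over the sliding window $[x-2^{-k-1},x+2^{-k-1}]$ of the same length $2^{-k}$. A window centered in $I_{k,l}$ meets at most $I_{k,l-1}\cup I_{k,l}\cup I_{k,l+1}$, so $\mathfrak D^2_{j,k}f$ is pointwise dominated by the sum of three consecutive values of the step function $D^2_{j,k}f$; conversely, for every $l$ one finds a subinterval of $I_{k,l}$ of length $\ge 2^{-k-1}$ on which $\mathfrak D^2_{j,k}f\ge\tfrac12 D^2_{j,k}f$ (take the half of $I_{k,l}$ toward which the mass of $g$ concentrates, so that the window still captures at least half of that mass). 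These two inclusions give matching two-sided bounds between the distribution functions of $D^2_{j,k}f$ and $\mathfrak D^2_{j,k}f$, and since $\|\cdot\|_{2,2^{k-j}}$ is rearrangement invariant (Theorem \ref{thm:OrliczA}) they translate into $\|2^{j/2}D^2_{j,k}f\|_{2,2^{k-j}}\approx\|2^{j/2}\mathfrak D^2_{j,k}f\|_{2,2^{k-j}}$ with constants independent of $j,k$. Taking the supremum over $0\le k\le j$ and $j\in\N_0$ yields (i).

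For (ii) I would follow the scheme of Theorem \ref{thm:13} essentially verbatim. Writing $2^{j/2}D^2_{j,k}W=\sum_{l=0}^{2^k-1}\zeta_{k,l}\chi_{k,l}$ with $\zeta_{k,l}=2^k\int_{I_{k,l}}2^{j/2}|\Delta^2_{2^{-j-1}}W(t)|\,dt$, the rearrangement characterization of $\|\cdot\|_{2,2^{k-j}}$ reduces, exactly as in \eqref{eq:flog_bound}, the norm $\|2^{j/2}D^2_{j,k}W\|_{2,2^{k-j}}$ to the order statistics $(\zeta_k^*)_{2^z}$. A union bound over $0\le k\le j$ and over $z$ then leads to the same geometric sums as in Theorem \ref{thm:13}, provided one has a tail bound of the form $\P((\zeta_k^*)_{2^z}\ge cK_0)\le[\,\cdots\,]^{2^z}$. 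So the whole argument rests on two ingredients: a single-block tail estimate for $\zeta_{k,l}$ and independence across $l$.

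The hard part will be precisely these two ingredients, which is where the \emph{continuous} integral (rather than the discrete grid average of Theorem \ref{thm:13}) enters. The process $Z(t)=2^{j/2}\Delta^2_{2^{-j-1}}W(t)$ is a standardized Gaussian process that decorrelates as soon as $|t-t'|>2^{-j}$, so $\zeta_{k,l}$ is the normalized integral of $|Z|$ over an interval of length $2^{-k}$ containing $\sim 2^{j-k}$ independent correlation lengths; this is exactly the continuous analogue of the variables from the class $\mathcal G_{2^{j-k}}$ (Definition \ref{dfn:G_N}) used in Theorem \ref{thm:13}, and I would obtain its tail from the properties of the integrated absolute Wiener process collected in Section \ref{sec:app}, arriving again at an estimate of the type \eqref{eq:GN1}. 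For the independence, note that $\zeta_{k,l}$ depends on $W$ only through its increments on $[l2^{-k},(l+1)2^{-k}+2^{-j}]$, so neighbouring blocks overlap only in a strip of width $2^{-j}$; splitting the index set into even and odd $l$ produces two families of mutually independent blocks (the relevant intervals within each family being separated by a non-negative gap $2^{-k}-2^{-j}$, hence with disjoint interiors), and treating each family separately costs only a harmless constant factor. With the block tail bound and the independence in hand, the computation closes exactly as in Theorem \ref{thm:13}, giving $\|W_\cdot\|_{D}<\infty$ almost surely; part (iii) then follows at once from part (i).
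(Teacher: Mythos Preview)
Your overall strategy coincides with the paper's: prove (i) by a deterministic comparison of the dyadic and sliding averages, prove (ii) by reducing $\|2^{j/2}D^2_{j,k}W\|_{2,2^{k-j}}$ to order statistics of block variables with sub-gaussian tails coming from the integrated absolute Wiener process $\mathcal W$, and read (iii) off from (i) and (ii). The differences are tactical. In (i), for the direction $\|f\|_D\lesssim\|f\|_{\mathfrak D}$ you compare at the same scale via a rearrangement argument, while the paper compares $D^2_{j,k}f$ pointwise with $2\mathfrak D^2_{j,k-1}f$ (and then treats $k=0$ by hand); your route avoids that case split. In (ii) the paper does not appeal to an even/odd decoupling. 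Instead it bounds, for $t\in[m2^{-j-1},(m+1)2^{-j-1}]$,
\[
|\Delta^2_{2^{-j-1}}W(t)|\le \sum_{r=0}^{3}\bigl|W\bigl(t+\lfloor r/2\rfloor\,2^{-j-1}\bigr)-W\bigl((m+1+\lfloor r/2\rfloor)2^{-j-1}\bigr)\bigr|,
\]
a four-term telescoping that makes each summand, once integrated over the length-$2^{-j-1}$ cell, equal in law to $2^{-(j+1)/2}\mathcal W$ and \emph{exactly} independent across cells; one then applies \eqref{eq:IW02} and \eqref{eq:IntW04} directly. This device is precisely the ``bridge'' you leave implicit between $\zeta_{k,l}$ and the lemma on $\mathcal W$ in Section~\ref{sec:app}: without it, $\int|\Delta^2 W|$ is not literally an instance of $\mathcal W$, and you would still need a short argument (e.g.\ your $2$-dependent block average plus Hoeffding) to extract the sub-gaussian tail with parameter $2^{j-k}$. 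Either path closes, but the paper's telescoping both supplies the tail and removes the need for your even/odd split.
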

\begin{proof}
Step 1. Let $x\in I_{k,l}$ for some $l\in\{0,\dots,2^k-1\}$.
Then $I_{k,l}\subset (x-2^{-k},x+2^{-k})$, which gives $D^2_{j,k}f(x)\le 2{\mathfrak D}^2_{j,k-1}f(x)$.
This allows to estimate the terms with $0<k\le j$ in \eqref{eq:NewNorm01} from above by $\|f\|_{\mathfrak D}$.

To estimate also the terms with $0=k\le j$,
we put  $g(t)=|\Delta^2_{2^{-j-1}}f(t)|$ and calculate
\begin{align*}
\left\|D^2_{j,0}f\right\|_{2,2^{-j}}
&=\left\|\int_0^1 g(t)dt\cdot \chi_{[0,1]}\right\|_{2,2^{-j}}\\
&=
\left\|\int_0^{1/2} g(t)dt\cdot \chi_{[0,1]}\right\|_{2,2^{-j}}+ 
\left\|\int_{1/2}^1 g(t)dt\cdot \chi_{[0,1]}\right\|_{2,2^{-j}}\\
&\le C\left\{\left\|\int_0^{1/2} g(t)dt\cdot \chi_{[0,1/2]}\right\|_{2,2^{-j}}+ 
\left\|\int_{1/2}^1 g(t)dt\cdot \chi_{[1/2,1]}\right\|_{2,2^{-j}}\right\}\\
&\le C\left\{\left\|{\mathfrak D}^2_{j,0}f(x)\cdot \chi_{[0,1/2]}(x)\right\|_{2,2^{-j}}+ 
\left\|{\mathfrak D}^2_{j,0}f(x)\cdot \chi_{[1/2,1]}(x)\right\|_{2,2^{-j}}\right\}\\
&\le 2C \left\|{\mathfrak D}^2_{j,0}f\right\|_{2,2^{-j}},
\end{align*}
which gives that $\|f\|_{D}\lesssim \|f\|_{\mathfrak D}$.

Step 2. Fix $0\le k\le j$ and set again $g(t)=|\Delta^2_{2^{-j-1}}f(t)|$. If $x\in I_{k,l}$, then $(x-2^{-k-1},x+2^{-k-1})\subset I_{k,l-1}\cup I_{k,l}\cup I_{k,l+1}$. Therefore,
\begin{align*}
{\mathfrak D}_{j,k}^2 f(x)
=\sum_{l=0}^{2^k-1}{\mathfrak D}_{j,k}^2 f(x)\chi_{k,l}(x)
\le \sum_{l=0}^{2^k-1}\frac{1}{2^{-k}}\left\{\int_{I_{k,l-1}}|g(t)|dt+\int_{I_{k,l}}|g(t)|dt+\int_{I_{k,l+1}}|g(t)|dt\right\}\chi_{k,l}(x)
\end{align*}
We now apply the shift-invariance of the space $L_{{2,2^{k-j}}}$ and obtain that
$\|f\|_{\mathfrak D}\lesssim \|f\|_{D}$.

Step 3.  It is clear that (iii) follows from (i) and (ii). Hence, it is enough to prove (ii),
i.e., we show $\|W_\cdot\|_D<\infty$ almost surely. Although the proof resembles the proof of Theorem \ref{thm:13}, we will omit the use of the Faber system in this case. Moreover, in order to avoid technicalities, we first make the following observation. When we use the values of $(W_t)_{t\ge 0}$ also for $t>1$, we can assume that
\[
\Delta^2_hW(x)=W(x+2h)-2W(x+h)+W(x)
\]
for every $x,h>0$. This means that we do not make use of the restriction to $I$ as it appeared in \eqref{eq:seconddiffI}, which can make $\|W_\cdot\|_D$ only larger.

Furthermore, we distinguish again between $k=0$ and $k\ge 1$. If $k=0$, then
\begin{align*}
D^2_{j,0}W(x)=\chi_{I}(x)\cdot \int_0^1 |(\Delta_{2^{-j-1}}^2W)(t)|dt
\end{align*}
and, using Theorem \ref{thm:OrliczA}, we see that 
\begin{align}
\notag\sup_{j\ge 0} 2^{j/2}\|D^2_{j,0}W\|_{2,2^{-j}}
&=\sup_{j\ge 0}2^{j/2}\int_0^1 |(\Delta_{2^{-j-1}}^2W)(t)|dt 
\cdot \|\chi_{I}(x)\|_{2,2^{-j}}\\
\label{eq:Dspaces1}&\approx \sup_{j\ge 0} 2^{j/2}\int_0^1 |(\Delta_{2^{-j-1}}^2W)(t)|dt\\
\notag&=\sup_{j\ge 0}2^{j/2}\sum_{m=0}^{2^{j+1}-1} \int_{m\cdot 2^{-j-1}}^{(m+1)2^{-j-1}}|(\Delta_{2^{-j-1}}^2W)(t)|dt.
\end{align}

If $m\cdot 2^{-j-1}\le t\le (m+1)2^{-j-1}$, then we estimate
\begin{align}
\notag |(\Delta_{2^{-j-1}}^2W)(t)|
&=|W(t+2^{-j})-2W(t+2^{-j-1})+W(t)|\\
\label{eq:Dspaces2}&\le |W(t+2^{-j})-W((m+2)2^{-j-1})|
+|W((m+2)2^{-j-1})-W(t+2^{-j-1})|\\\notag &\quad+|W((m+1)2^{-j-1})-W(t+2^{-j-1})| + |W(t)-W((m+1)2^{-j-1})|.
\end{align}
If we plug this estimate into \eqref{eq:Dspaces1}, we obtain four (very similar) terms. We only estimate the first term, since  the others can be handled in the same manner. If we set 
\[
\alpha^j_m=\frac{1}{2^{-j-1}}\int_{m\cdot 2^{-j-1}}^{(m+1)2^{-j-1}}|W(t)-W(m\cdot2^{-j-1})|dt,
\]
we can see that 
\begin{align*}
\sup_{j\ge 0}\, & 2^{j/2}\sum_{m=0}^{2^{j+1}-1} \int_{m\cdot 2^{-j-1}}^{(m+1)2^{-j-1}}|W(t+2^{-j})-W((m+2)2^{-j-1})|dt=\sup_{j\ge0}2^{-j/2-1}\sum_{m=0}^{2^{j+1}-1} \alpha^j_{m+2}.
\end{align*}
By their definition, $\{\alpha^j_m\}_{m\ge 0}$ are independent random variables, all equidistributed with
\[
\frac{1}{2^{-j-1}}\int_0^{2^{-j-1}}|W(s)|ds
\]
and  therefore have  the same distribution as
$2^{-(j+1)/2}{\mathcal W}$, where ${\mathcal W}=\int_0^1|W(s)|ds$ is the integrated absolute Wiener process. For the convenience of the reader we recall a few of its basic properties in Section \ref{sec:IW}. In particular, Lemma \ref{lem:IW} (ii) allows us to conclude for every $K\ge 1$ that 
\begin{align*}
\P\left(\sup_{j\ge 0}2^{-j/2-1}\sum_{m=0}^{2^{j+1}-1} \alpha^j_{m+2}=\infty\right)
\le \sum_{j=0}^\infty \P\left(\frac{1}{2^{j+1}}\sum_{m=0}^{2^{j+1}-1}2^{\frac{j+1}{2}}\alpha^j_m>K\right)
\le \sum_{j=0}^\infty \exp(1-c 2^{j+1}K^2),
\end{align*}
which goes to zero if $K\to\infty.$

Step 4. Next, we estimate the terms with $0<k\le j$. The argument is quite similar to the previous step, which allows us to leave out some technical details. First for every $k\ge 1$ we rewrite 
\begin{align}
\notag D^2_{j,k}W(x)&=\sum_{l=0}^{2^k-1}\chi_{k,l}(x)\cdot\frac{1}{2^{-k}}\int_{l\cdot 2^{-k}}^{(l+1)2^{-k}}|\Delta^2_{2^{-j-1}}W(t)|dt\\
\label{eq:new1}&=\sum_{l=0}^{2^k-1}\chi_{k,l}(x)\cdot\frac{1}{2^{-k}}\sum_{m=0}^{2^{j-k+1}-1}\int_{l\cdot 2^{-k}+m\cdot 2^{-j-1}}^{l\cdot2^{-k}+(m+1)2^{-j-1}}|\Delta^2_{2^{-j-1}}W(t)|dt.
\end{align}
If $0\le t-l\cdot 2^{-k}-m\cdot 2^{-j-1}\le 2^{-j-1}$, then we obtain \eqref{eq:Dspaces2}
with $m\cdot 2^{-j-1}$ replaced by
$\tau:=l\cdot 2^{-k}+m\cdot 2^{-j-1}$.
We insert this into \eqref{eq:new1} and derive again   an estimate of $D^2_{j,k}W(x)$ invoking a sum of four terms, which we denote by $D^{2,i}_{j,k}W(x)$ with $i\in\{1,2,3,4\}$. Again, we estimate only one of these terms (say, the one with $i=4$), since the others are very similar to deal with. We put 
\[
\alpha^{j,k}_{l,m}=\frac{1}{2^{-j-1}}\int_{l\cdot 2^{-k}+m\cdot 2^{-j-1}}^{l\cdot2^{-k}+(m+1)2^{-j-1}}|W(t)-W(l\cdot 2^{-k}+m\cdot 2^{-j-1}+2^{-j-1})|dt
\]
and obtain
\[
D^{2,4}_{j,k}W(x)=\sum_{l=0}^{2^k-1}\chi_{k,l}(x)\cdot\frac{1}{2^{j-k+1}}\sum_{m=0}^{2^{j-k+1}-1}\alpha^{j,k}_{l,m}.
\]
As $V_t=W_1-W_{1-t}$ with $0\le t\le 1$ is equidistributed with $W_t$, we observe that 
\[
\{\alpha_{j,k}^{l,m}:l=0,\dots,2^k-1, m=0,\dots,2^{j-k+1}-1\}
\]
are independent random variables distributed like
$2^{-(j+1)/2}{\mathcal W}$. %, where ${\mathcal W}=\int_0^1|W(s)|ds.$
Hence, if we set 
\[
B^{j,k}_l=\frac{1}{2^{j-k+1}}\sum_{m=0}^{2^{j-k+1}-1}2^{(j+1)/2}\alpha^{j,k}_{l,m},
\]
then $\{B^{j,k}_l\}_{l=0}^{2^k-1}$ are independent and each of them is distributed as the average of $2^{j-k+1}$ variables from ${\mathcal W}.$
Finally, we observe that $\|2^{j/2}D^{2,4}_{j,k}W\|_{2,2^{k-j}}$ has the same distribution as
\[
\frac{1}{\sqrt{2}}\left\|\sum_{l=0}^{2^k-1}\chi_{k,l}(x)B^{j,k}_l\right\|_{2,2^{k-j}}.
\]
The proof is then finished in the same manner as the proof of Theorem \ref{thm:13} by using \eqref{eq:IntW04} instead of \eqref{eq:GN1}. %DETAILS?
\end{proof}

We close this section with the following  remark and two open problems.
\begin{rem}
A natural question at this place is, if the method and the results presented so far could also be applied to other processes. The first natural candidate is the \emph{fractional Brownian motion}, which is a Gaussian process $B_H=(B_H(t))_{t\ge 0}$ with $B_H(0)=0$, $\E B_H(t)=0$ for all $t\ge 0$ and
\[
\E[B_H(t)B_H(s)]=\frac{1}{2}(|t|^{2H}+|s|^{2H}-|t-s|^{2H})\quad \text{for all}\ s,t\ge 0.
\]
Here, $H\in(0,1)$ is the so-called Hurst index. For $H=1/2$, one recovers the standard Brownian motion with independent increments, but the increments are no longer independent if $H\not=1/2$.

One can again recover the decomposition of the paths of $B_H$ into the Faber system, but (due to loss of independence of the increments) the random coefficients are no longer independent, see \cite[Lemma IV.2]{CKR93}. Although this obstacle can be overcome, the proofs are technically much more involved. That is why we decided to concentrate on the standard Brownian motion in this paper. 
\end{rem}

\begin{OP} All the regularity results obtained so far used Besov spaces and their numerous variants. The other well-known scale of Fourier-analytic function spaces, the so-called \emph{Triebel-Lizorkin spaces}, did not play any important role up to now. For these space a characterization by the Faber system is also available and actually quite similar to Theorem \ref{thm:faber_s}. The main difference to Besov spaces is that one first applies some sequence space norm to $(f_j(t))_{j\ge 0}$, cf. \eqref{eq:Def_fj}, and only afterwards some function space norm. The analysis of the regularity of Brownian paths in the frame of Triebel-Lizorkin spaces could be based on the observation, that the functions $f_j(t)$ are extremely unlikely to be large for the same value of $t$.
\end{OP}

\begin{OP}
It is very well known, that Besov (and Triebel-Lizorkin) spaces can be characterized by differences in several different ways, including also the so-called \emph{ball means of differences}, cf. \cite{TrFS1}. Note however, that the usual ball means of differences differ essentially from $D^2_{j,k}f(x)$ and ${\mathfrak D}^2_{j,k}f(x)$ introduced in Definition \ref{def:DD}. It would be of some interest to know, if one could build new scales of function spaces in the spirit of \eqref{eq:NewNorm01} and \eqref{eq:NewNorm2}
and investigate their relation  with the  already known function spaces.
\end{OP}

\section{Path regularity of Brownian sheets}\label{sec:3}

The aim of this section is to show that the methods presented in Section \ref{sec:d1} for the univariate Wiener process
can be quite easily generalized to the multivariate setting. First, let us introduce the multivariate
analogue of the Wiener process, the so-called Brownian sheet.
\begin{dfn}\label{dfn:BrownianSheet} A continuous Gaussian process $B=(B(t))_{t\in\R_+^d}$ is called Brownian sheet, if $\E B(t)=0$ for every $t=(t_1,\dots,t_d)\in\R_+^d$ and
\begin{equation}\label{Brownian-sheet-1}
{\rm Cov}(B(s),B(t))=\prod_{i=1}^d \min(s_i,t_i)\quad \text{for all}\quad s,t\in\R_+^d.
\end{equation}
\end{dfn}

The study of the Brownian sheet goes back to the 1950's. Since then, many of its properties (including the properties of sample paths) were studied in great detail. For an overview we refer to \cite{Khos} and \cite{Walsh} and the references given therein.
The relation of the Brownian sheet with approximation theory also attracted a lot of attention in connection with the probability estimates of small balls, cf. \cite{BL1, BLV1, Dunker, KL02, KL93, KL93', Linde99, Tal94}. However, this problem in its full generality remains unsolved up to now.

\subsection{L\'evy's decomposition of the Brownian sheet}\label{subsec-31}

We now present the decomposition of the paths of the Brownian sheet into the corresponding Faber system. For this sake, we need to
develop the multivariate analogues of the tools of Section \ref{subsec:21}, i.e., multivariate Faber systems and second order differences
of functions of several variables.
We restrict ourselves to the two-dimensional setting, i.e., to the case $d=2$. This simplifies to some extent the notation used, nevertheless,
the general case $d\ge 2$ can be treated in the same way. 
Hence, we now focus on the Brownian sheet $B=(B(x))_{x\in Q}$ on $Q=[0,1]^2$.

Similarly to \eqref{eq:Levy01}, we show that the paths of the Brownian sheet can be decomposed into the multivariate Faber system and that
the coefficients of this decomposition are again independent Gaussian variables.
In order to do this, we follow \cite[Section 3.2]{Triebel10} and  first  describe the decomposition
of (continuous) functions into the two-dimensional Faber system.

\subsubsection{{Multivariate Faber system and second order differences}}

The \emph{multivariate Faber system} is obtained by considering the tensor products of the functions from the univariate Faber system on $[0,1]$, which
was introduced in Section \ref{sec:Faber} as
\begin{equation}\label{eq:Faberd1}
\{v_0(t)=1-t, v_1(t)=t, v_{j,m}(t):j\in \N_0, m=0,\dots,2^j-1\},\quad t\in[0,1].
\end{equation}
In the sequel we use the notation $\N_{-1}:=\N_0\cup \{-1\}=\{-1,0,1,\dots\}$ and put $v_{-1,0}(t)=v_0(t)$ and $v_{-1,1}(t)=v_1(t).$
We consider the tensor products of the functions in \eqref{eq:Faberd1}
\[
v_{k,m}(t_1,t_2)=v_{k_1,m_1}(t_1)\cdot v_{k_2,m_2}(t_2), \quad (t_1,t_2)\in [0,1]^2,
\]
where $k=(k_1,k_2)$  with  $k_i \in\N_{-1}$ 
and $m=(m_1,m_2)$ with  $m_i\in\{0,1\}$ if $k_i=-1$ and $m_i\in\{0,1,\dots,2^{k_i}-1\}$ if $k_i\in \N_0$ for $i=1,2$.
Moreover, by  ${\mathcal P}^F_k$ we denote the admissible set of $m$'s for given $k$. 
Finally, the system
\[
\{v_{k,m}:k\in \N_{-1}^2, m\in {\mathcal P}^F_k\}
\]
is the Faber system on $[0,1]^2.$ 

Similarly to \eqref{eq:triebel1}, the coefficients of the decomposition of a continuous $f\in C(Q)$ will be the \emph{second order differences} of $f$.
These are defined in a rather straightforward manner.
\begin{dfn}\label{dfn:mixeddifference}
\begin{enumerate}
\item If $f$ is a continuous function on $\R^2$, we define second order differences 
\begin{align*}
\Delta^2_{h,1}f(t_1,t_2)&:=f(t_1+2h,t_2)-2f(t_1+h,t_2)+f(t_1,t_2)\\
&=\Delta^1_{h,1}(\Delta^1_{h,1}f)(t_1,t_2)=\sum_{i=0}^2(-1)^i\binom{2}{i}f(t_1+ih,t_2)
\end{align*}
and similarly for $\Delta^2_{h,2}f$. The second order mixed differences are defined as
\begin{align*}
\Delta^{2,2}_{h_1,h_2}f(t_1,t_2)&:=\Delta^2_{h_2,2}(\Delta^2_{h_1,1}f)(t_1,t_2)\\
&=\sum_{i,j=1}^2 (-1)^{i+j}\binom{2}{i}\binom{2}{j}f(t_1+ih_1,t_2+ih_2).
\end{align*}
\item %We  extend the coefficients $-\frac 12(\Delta^2_{2^{-j-1}}f)(2^{-j}m)$ from \eqref{faber-dec-1} from $I$ to $[0,1]^2$ as follows:  If $f$ is a continuous function on $[0,1]^2$ and  
For $m\in{\mathcal P}^F_k$, we put 
\begin{equation}\label{eq:d2f}
d^2_{k,m}(f):=\begin{cases}
f(m_1,m_2), & \text{if } k=(-1,-1),\\
-\frac 12 \Delta^2_{2^{-k_2-1},2}f(m_1,2^{-k_2}m_2), & \text{if }  k=(-1,k_2), \ k_2\in \N_0, \\
-\frac 12 \Delta^2_{2^{-k_1-1},1}f(2^{-k_1}m_1,m_2), & \text{if }  k=(k_1,-1), \ k_1\in \N_0, \\
\frac 14 \Delta^{2,2}_{2^{-k_1-1},2^{-k_2-1}}f(2^{-k_1}m_1,2^{-k_2}m_2), & \text{if }  k=(k_1,k_2), \ k_1, k_2\in \N_0.  
\end{cases}
\end{equation}
\end{enumerate}
\end{dfn}
The two-dimensional analogue of Theorem \ref{thm:triebel1} then reads as follows.
\begin{thm}(\cite[Thm.~3.10]{Triebel10})
For $f\in C([0,1]^2)$ it holds
\begin{equation}\label{eq:Triebel11}
f(t)=\sum_{k\in \N^2_{-1}}\sum_{m\in{\mathcal P}_k^F}d^2_{k,m}(f)v_{k,m}(t)=\lim_{K\to\infty}\sum_{k\in\{-1,0,\dots,K\}^2}\sum_{m\in{\mathcal P}_k^F}d^2_{k,m}(f)v_{k,m}(t),\quad t\in[0,1]^2,
\end{equation}
where the limit is taken in the uniform norm.
\end{thm}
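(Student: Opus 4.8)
The plan is to reduce the statement to the one-dimensional decomposition of Theorem \ref{thm:triebel1} by tensorization. Write the univariate coefficient functionals as $c_{-1,m}(g)=g(m)$ for $m\in\{0,1\}$ and $c_{k,m}(g)=-\tfrac12(\Delta^2_{2^{-k-1}}g)(2^{-k}m)$ for $k\in\N_0$, so that Theorem \ref{thm:triebel1} reads $g=\sum_{k\in\N_{-1}}\sum_m c_{k,m}(g)v_{k,m}$ uniformly on $[0,1]$ for every $g\in C([0,1])$. First I would record the elementary but crucial bookkeeping observation that the two-dimensional coefficients tensorize: comparing Definition \ref{dfn:mixeddifference} with the product $c_{k_1,m_1}\otimes c_{k_2,m_2}$ (the first functional acting in $t_1$, the second in $t_2$) and using $(-\tfrac12)(-\tfrac12)\Delta^2_1\Delta^2_2=\tfrac14\Delta^{2,2}$, one checks in each of the four cases of \eqref{eq:d2f} that
\[
d^2_{k,m}(f)=c_{k_1,m_1}^{(1)}\!\big(c_{k_2,m_2}^{(2)}(f)\big),
\]
while by definition $v_{k,m}(t_1,t_2)=v_{k_1,m_1}(t_1)v_{k_2,m_2}(t_2)$.

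Then I would introduce the univariate partial-sum operator $S_K g=\sum_{k=-1}^{K-1}\sum_m c_{k,m}(g)v_{k,m}$, and let $S_K^{(1)}$, $S_K^{(2)}$ denote $S_K$ acting in the first, resp. second, variable with the other frozen. By the construction preceding Theorem \ref{thm:triebel1}, $S_K g$ is exactly the piecewise linear interpolant of $g$ at the dyadic nodes $\{l2^{-K}\}$; this gives immediately the two facts I will need, namely $\|S_K\|_{C([0,1])\to C([0,1])}\le 1$ (a piecewise linear interpolant never exceeds the sup-norm of its node values) and the error bound $\|g-S_Kg\|_\infty\le\omega(g,2^{-K})$ in terms of the modulus of continuity $\omega$. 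Since $S_K^{(1)}$ and $S_K^{(2)}$ act on different variables they commute, and by the tensorization of coefficients and basis functions the partial sum on the left of \eqref{eq:Triebel11} is precisely $S_K^{(1)}S_K^{(2)}f$ (up to the harmless index shift between $\{-1,\dots,K\}$ and $\{-1,\dots,K-1\}$).

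It then remains to prove $S_K^{(1)}S_K^{(2)}f\to f$ uniformly, which is the one genuinely analytic step. I would telescope
\[
f-S_K^{(1)}S_K^{(2)}f=\big(f-S_K^{(1)}f\big)+S_K^{(1)}\big(f-S_K^{(2)}f\big).
\]
For the first term, applying the univariate error bound in $t_1$ for each fixed $t_2$ and using that $f$ is uniformly continuous on the compact square $[0,1]^2$ gives $\|f-S_K^{(1)}f\|_\infty\le\omega(f,2^{-K})$, where $\omega(f,\cdot)$ is the joint modulus of continuity of $f$. For the second term, the uniform bound $\|S_K^{(1)}\|\le1$ yields $\|S_K^{(1)}(f-S_K^{(2)}f)\|_\infty\le\|f-S_K^{(2)}f\|_\infty\le\omega(f,2^{-K})$ by the same estimate applied in $t_2$. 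Both bounds tend to $0$ as $K\to\infty$, proving uniform convergence.

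The main obstacle is precisely this last step: strong (pointwise-on-functions) convergence $S_K^{(i)}\to\mathrm{id}$ in each variable does not by itself pass to the tensor product, and the argument only closes because the interpolation operators are uniformly bounded with norm $\le1$ --- it is this uniform boundedness, combined with the telescoping identity, that upgrades the two separate one-dimensional limits to convergence of the double series. Everything else reduces to matching the definitions of $d^2_{k,m}$ and $v_{k,m}$ with tensor products of their univariate counterparts.
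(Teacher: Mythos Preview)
The paper does not prove this theorem; it is quoted verbatim from \cite[Thm.~3.10]{Triebel10} without proof. Your argument is correct and is exactly the standard tensor-product route one would expect: identify $d^2_{k,m}$ and $v_{k,m}$ as tensor products of their one-dimensional counterparts, recognise the square partial sum as the composition $S_K^{(1)}S_K^{(2)}$ of the univariate piecewise-linear interpolation operators, and pass to the limit via the telescoping identity together with the two facts $\|S_K\|_{C\to C}\le 1$ and $\|g-S_Kg\|_\infty\le\omega(g,2^{-K})$. Your remark that mere strong convergence in each variable would not suffice, and that the uniform bound on $\|S_K\|$ is what makes the tensor argument close, is the right diagnosis of where the content lies.
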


\subsubsection{Decomposition of paths of the Brownian sheet}

Similarly to \eqref{eq:Levy01}  we can apply \eqref{eq:Triebel11}   to paths of the Brownian sheet $B=(B(x))_{x\in Q}$
if we replace the scalars $d^2_{k,m}(f)$ by random variables $d^2_{k,m}(B)$. First, observe that \eqref{Brownian-sheet-1}
ensures that $B(0,t)=B(s,0)=0$ almost surely for every $s,t\ge 0$, which implies that almost surely we also have  
\begin{equation}\label{eq:d2zero}
d^2_{k,m}(B)=0\quad\text{if}\quad \begin{cases}k=(-1,-1)\quad\text{and}\quad m=(m_1,m_2)\in\{(0,0),(0,1),(1,0)\},\\
k=(-1,k_2)\quad \text{if}\quad k_2\in\N_0\quad\text{and}\quad m_1=0,\\
k=(k_1,-1)\quad \text{if}\quad k_1\in\N_0\quad\text{and}\quad m_2=0.
\end{cases}
\end{equation}
Furthermore, all the random variables $d^2_{k,m}(B)$ are Gaussian with mean zero. If $k_1=-1$ or $k_2=-1$, then their variance can be computed directly as
\begin{align*}
\var d^2_{(-1,-1),(1,1)}(B)&=\var B(1,1)=1,\\
\var d^2_{(-1,k_2),(1,m_2)}(B)
&=\var\! \Bigl(-\frac 12 \Delta^2_{2^{-k_2-1},2}B(1,2^{-k_2}m_2)\Bigr)\\
&=\frac{1}{4} \var\! \Bigl( B(1,2^{-k_2}(m_2+1))-2B(1,2^{-k_2}(m_2+1/2))+B(1,2^{-k_2}m_2)\Bigr)\\
&=2^{-k_2-2},
\end{align*}
and similarly, we obtain $\displaystyle \var d^2_{(k_1,-1),(m_1,1)}(B)= 2^{-k_1-2}$.

In order to  calculate the variance of $d^2_{k,m}(B)$ for $k_1,k_2\in \N_0$, we introduce some  further notation.
For a cube $\tilde Q=[s_1,s_2]\times[t_1,t_2]$, where $0\le s_1\le s_2\le 1$ and $0\le t_1\le t_2\le 1$, we put  
\begin{align}\label{eq:def_BQ}
B(\tilde Q)&:=B{(s_2,t_2)}-B{(s_1,t_2)}-B{(s_2,t_1)}+B{(s_1,t_1)}\\
\notag&=\Delta^1_{s_2-s_1,1}(\Delta^1_{t_2-t_1,2}B)(s_1,t_1)=\sum_{i=1}^2\sum_{j=1}^2 (-1)^{i+j}B(s_i,t_j).
\end{align}
Then $B(\tilde Q)$ is a centered Gaussian variable with variance
\begin{align*}
\mathrm{var} B(\tilde Q)  &=\E B(\tilde Q)^2= \sum_{i,j,k,l=1}^2 (-1)^{i+j+k+l}\E \left[B(s_i,t_j)B(s_k,t_l)\right] \\
& = \sum_{i,j,k,l=1}^2 (-1)^{i+j+k+l}\min(s_i,s_k)\min(t_j,t_l)\\
&= \sum_{i,k=1}^2 (-1)^{i+k}\min(s_i,s_k) \sum_{j,l=1}^2 (-1)^{j+l}\min(t_j,t_l)=(s_2-s_1)(t_2-t_1), 
\end{align*}
where in the third step we used \eqref{Brownian-sheet-1}. 
Furthermore, if $\tilde Q_1=[s_1,s_2]\times [t_1,t_2]$ and $\tilde Q_2=[\sigma_1,\sigma_2]\times[\tau_1,\tau_2]$ are disjoint, then we obtain in the same way
\begin{align*}
\E B(\tilde Q_1)B(\tilde Q_2)%&=\sum_{i,j,k,l=1}^2 (-1)^{i+j+k+l}\min(s_i,\sigma_k)\min(t_j,\tau_l)\\
&= \sum_{i,k=1}^2 (-1)^{i+k}\min(s_i,\sigma_k) \sum_{j,l=1}^2 (-1)^{j+l}\min(t_j,\tau_l)=0.
\end{align*}
Actually, $B(\tilde Q_1)$ and $B(\tilde Q_2)$ are not only uncorrelated but also independent, cf. \cite[Sect.~2.4, Prop.~1]{dal04}.
For $k\in\N_0^2$ and $m\in{\mathcal P}^F_k$, we define
\[
Q_{k,m}=I_{k_1,m_1}\times I_{k_2,m_2}=[2^{-k_1}m_1,2^{-k_1}(m_1+1)]\times [2^{-k_2}m_2,2^{-k_2}(m_2+1)]
\]
and using this notation, we compute for $k_1,k_2\ge 0$, 
\begin{align}
\notag4d^2_{k,m}(B)&=\Delta^{2,2}_{2^{-k_1-1},2^{-k_2-1}}B(2^{-k_1}m_1,2^{-k_2}m_2)\\
\label{eq:4d2}&=B(Q_{(k_1+1,k_2+1),(2m_1+1,2m_2+1)}) - B(Q_{(k_1+1,k_2+1),(2m_1+1,2m_2)})\\
\notag&\qquad- B(Q_{(k_1+1,k_2+1),(2m_1,2m_2+1)}) + B(Q_{(k_1+1,k_2+1),(2m_1,2m_2)}). 
\end{align}
Since the four summands on the right hand side are independent Gaussian variables with variance $2^{-(k_1+k_2)-2}$ %$B(Q_{(k_1+1,k_2+1),(\ast,\ast)})\sim N(0,2^{-(k_1+k_2)-2})$
we obtain that $d^2_{k,m}(B)$ is a Gaussian variable with variance $ 2^{-(k_1+k_2+4)} $.

Merging all what we said about $d^2_{k,m}(B)$ so far, we arrive at the decomposition of the paths of the Brownian sheet into the  multivariate Faber system.
The discussion of independence is postponed to Section \ref{subsec:314}.
\begin{thm}\label{thm:Levy_sheet} For the Brownian sheet on $[0,1]^2$ it holds almost surely that
\begin{align}
\notag B(t_1,t_2)&=\xi_{(-1,-1),(1,1)}t_1t_2
+\sum_{k_1=0}^\infty\sum_{m_1=0}^{2^{k_1}-1} 2^{-(k_1+2)/2}\xi_{(k_1,-1)(m_1,1)}
v_{k_1,m_1}(t_1)t_2\\
\label{eq:Sheet02}&\quad+\sum_{k_2=0}^\infty\sum_{m_2=0}^{2^{k_2}-1} 2^{-(k_2+2)/2}\xi_{(-1,k_2)(1,m_2)}
t_1v_{k_2,m_2}(t_2)\\
\notag &\quad+\sum_{k\in\N_0^2}\sum_{m_1=0}^{2^{k_1}-1}\sum_{m_2=0}^{2^{k_2}-1}2^{-(k_1+k_2+4)/2}\xi_{k,m}v_{k,m}(t_1,t_2),
\end{align}
where $\left\{\xi_{k,m}: \ k\in \N_{-1}^2, \ m\in {\mathcal P}^F_k\right\}$ are independent standard Gaussian variables and the series converges uniformly on $[0,1]^2$.
\end{thm}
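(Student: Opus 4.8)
The plan is to transfer the one-dimensional argument to the two-dimensional setting, exploiting that essentially all of the analytic work has already been carried out above. Since a Brownian sheet is by Definition \ref{dfn:BrownianSheet} a continuous Gaussian process, its paths are almost surely continuous, so we may apply the multivariate Faber decomposition \eqref{eq:Triebel11} pathwise. This yields, on a set of full measure, the expansion
\[
B(t)=\sum_{k\in\N_{-1}^2}\sum_{m\in\mathcal P^F_k}d^2_{k,m}(B)\,v_{k,m}(t),\quad t\in[0,1]^2,
\]
with uniform convergence on $Q$. Thus the convergence assertion of the theorem comes for free, and it remains only to rewrite the coefficients $d^2_{k,m}(B)$ in the normalized form claimed in \eqref{eq:Sheet02}.

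First I would discard the vanishing boundary terms. By \eqref{eq:d2zero}, almost surely $d^2_{k,m}(B)=0$ whenever $k=(-1,-1)$ and $m\neq(1,1)$, or $k=(-1,k_2)$ with $m_1=0$, or $k=(k_1,-1)$ with $m_2=0$. Intersecting the full-measure event on which \eqref{eq:d2zero} holds with the one on which \eqref{eq:Triebel11} is valid, the surviving terms are precisely the four groups appearing on the right-hand side of \eqref{eq:Sheet02}. Next I would normalize. Each $d^2_{k,m}(B)$ is a centered Gaussian variable whose variance has already been computed: it equals $1$ for $k=(-1,-1)$, it is $2^{-k_i-2}$ for the two mixed boundary families, and it is $2^{-(k_1+k_2+4)}$ for $k\in\N_0^2$, cf. \eqref{eq:4d2}. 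Dividing by the corresponding standard deviation, I set $\xi_{k,m}:=d^2_{k,m}(B)/\sqrt{\var d^2_{k,m}(B)}$, which is an $\mathcal N(0,1)$ variable; substituting $d^2_{k,m}(B)=\sqrt{\var d^2_{k,m}(B)}\,\xi_{k,m}$ produces exactly the normalizing factors $2^{-(k_1+2)/2}$, $2^{-(k_2+2)/2}$, and $2^{-(k_1+k_2+4)/2}$ of the statement.

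The genuinely substantive point that remains is the joint independence of the whole family $\{\xi_{k,m}\}$, and this is the step I expect to require the most care. Because these variables are jointly Gaussian and centered, independence is equivalent to pairwise uncorrelatedness; this in turn follows by expressing each $d^2_{k,m}(B)$ as a fixed linear combination of the rectangle increments $B(\tilde Q)$ of \eqref{eq:def_BQ} over disjoint dyadic rectangles, and using that $B(\tilde Q_1)$ and $B(\tilde Q_2)$ are independent for disjoint $\tilde Q_1,\tilde Q_2$ (established above). This is precisely the orthogonality computation carried out in Section \ref{subsec:314}, so at this stage I would simply invoke it; the delicate part is keeping track of which dyadic rectangles enter each coefficient at the various levels $k$.

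Finally, let me note an alternative route for the convergence that avoids the multivariate decomposition \eqref{eq:Triebel11} and instead mirrors the univariate tail-bound argument. One groups the series into increments indexed by the total level $k_1+k_2=n$, and at any fixed point at most $O(n)$ of the functions $v_{k,m}$ of that level are nonzero (for each fixed $k$ the $v_{k,m}$ have pairwise disjoint supports and sup-norm one), so the sup-norm of the $n$-th increment is controlled by $O(n)$ times the maximum of the finitely many Gaussian coefficients it contains. Applying the Gaussian tail bound of Lemma \ref{lem:tail_bounds} together with a union bound, and summing the resulting rapidly decaying series as in \eqref{eq:sim1}, gives almost sure uniform convergence. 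The only real complication here is the two-dimensional book-keeping of the levels, which is why invoking \eqref{eq:Triebel11} is the cleaner option.
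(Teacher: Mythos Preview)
Your proposal is correct and follows essentially the same route as the paper: apply the multivariate Faber decomposition \eqref{eq:Triebel11} to the (a.s.\ continuous) paths, discard the vanishing boundary coefficients via \eqref{eq:d2zero}, normalize by the computed variances, and obtain independence from the orthogonality argument of Section~\ref{subsec:314}. The only cosmetic difference is that the paper phrases independence via Lemma~\ref{2-stability}(ii) (orthogonal linear forms in i.i.d.\ Gaussians) rather than through ``jointly Gaussian $+$ uncorrelated'', but these are the same statement here.
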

\begin{rem}
If $k\in\N_{-1}^2$ and $m\in{\mathcal P}_k^F$, then we denote $\gamma_{k,m}=\gamma_{(k_1,k_2),(m_1,m_2)}=\gamma_{k_1,m_1}\cdot\gamma_{k_2,m_2}$, where
\[
\gamma_{j,l}=\begin{cases}0& \text{if}\quad j=-1\ \text{and}\ l=0,\\
1& \text{if}\quad j=-1\ \text{and}\ l=1,\\
2^{-(j+2)/2}& \text{if}\quad j\ge 0.
\end{cases}
\]
This allows us to reformulate \eqref{eq:Sheet02} as
\begin{equation}\label{eq:Sheet01}
B(t_1,t_2)=\sum_{k\in\N_{-1}^2}\sum_{m\in {\mathcal P}_k^F}\gamma_{k,m}\xi_{k,m}v_{k,m}(t_1,t_2).
\end{equation}
\end{rem}

\subsubsection{Independence}\label{subsec:314}
Next, we show that the random variables $\{d_{k,m}^2(B):k\in\N_{-1}^2, m\in{\mathcal P}^F_k\}$, which appear  in Theorem \ref{thm:Levy_sheet}, are indeed independent. %for different values of $k$ and $m$.
The argument is a tensor product variant of the proof given for the univariate Wiener process, cf. Theorem \ref{thm:LevyDecomp}.
For that sake, let us consider mutually different $(k^1,m^1),\dots,(k^N,m^N)$ with $k^i=(k^i_1,k^i_2)$ and $m^i=(m^i_1,m^i_2)$. Let
\[
K=(K_1,K_2),\quad \text{where}\ K_j=\max\{k^1_j,\dots,k^N_j\} \ \text{ for }\ j\in\{1,2\}, 
\]
and let us consider the array of Gaussian variables
\[
\widetilde B^K=(B(Q_{K+1,m}):m=(m_1,m_2)\ \text{and}\ 0\le m_j\le 2^{K_j}-1\ \text{for}\ j\in\{1,2\}),
\]
where $B(\tilde Q)$ was defined in \eqref{eq:def_BQ} for a closed cube $\tilde Q\subset [0,1]^2$ with sides parallel to the coordinate axes.

Moreover, let $I,J\subset [0,1]$ be two (closed) intervals,  where  $I=I_1\cup I_2$ and $J=J_1\cup J_2$ are   decompositions of $I$ and $J$ into
two intervals $I_1$, $I_2$ and $J_1$, $J_2$, respectively, which intersect only at one point. Then a straightforward calculation shows that
\begin{align}
\label{eq:Bnotation_split1}B(I\times J)&=B((I_1\cup I_2)\times J)=B(I_1\times J)+B(I_2\times J)\\
\label{eq:Bnotation_split2}B(I\times J)&=B(I\times (J_1\cup J_2))=B(I\times J_1)+B(I\times J_2).
\end{align}
Furthermore, we denote
\begin{equation}\label{eq:tensor_h}
\bigl(h^{(K_1,K_2)}_{(k_1,k_2),(m_1,m_2)}\bigr)_{(l_1,l_2)}=
\bigl(h^{K_1}_{k_1,m_1}\bigr)_{l_1}\cdot \bigl(h^{K_2}_{k_2,m_2}\bigr)_{l_2},
\end{equation}
where the vectors on the right-hand side were defined in \eqref{eq:def_h} for $k_1,k_2\ge 0$ and we complement this definition by putting $h^{K_j}_{-1,1}=(1,\dots,1)^T$ for $j=1,2$. 

If $k=(k_1,k_2)=(-1,-1)$, we observe that by \eqref{eq:d2zero} it is enough to consider $m=(1,1).$
Then we apply \eqref{eq:d2f}, \eqref{eq:Bnotation_split1} and \eqref{eq:Bnotation_split2} together with \eqref{eq:tensor_h}
and obtain
\[
d^2_{(-1,-1),(1,1)}(B)=B(Q)=\sum_{l_1=0}^{2^{K_1+1}-1}\sum_{l_2=0}^{2^{K_2+1}-1}{B(Q_{K+1,l})}= \langle h^K_{k,m},\widetilde B^K\rangle.
\]

If $k=(k_1,k_2)=(-1,k_2)$ with $0\le k_2\le K_2$, we assume by \eqref{eq:d2zero} that $m=(1,m_2)$, where $0\le m_2\le 2^{k_2}-1$.
Then we combine \eqref{eq:d2f}, \eqref{eq:Bnotation_split1}, \eqref{eq:Bnotation_split2}, and \eqref{eq:tensor_h} and obtain
\begin{align*}
-2d^2_{(-1,k_2),(1,m_2)}(B)=B([0,1]\times I_{k_2+1,2m_2+1})-B([0,1]\times I_{k_2+1,2m_2})=\langle h^K_{k,m},\widetilde B^K\rangle.
\end{align*}
The case of $k=(k_1,-1)$ is treated similarly. And finally, if $k=(k_1,k_2)\in \N_0^2$ with $0\le k_1\le K_1$ and $0\le k_2\le K_2$,
we employ \eqref{eq:4d2} and observe that
\[
4d^2_{k,m}=\langle h^K_{k,m},\widetilde B^K\rangle.
\]
The independence of $\bigl(d^2_{k^i,m^i}(B)\bigr)_{i=1}^N$ now follows again by the orthogonality of $\bigl(h^K_{k^i,m^i}\bigr)_{i=1}^N$,
which in turn is a consequence of \eqref{eq:tensor_h}.

\subsection{Function spaces}

As already mentioned   before, the appropriate function spaces, which best capture the regularity of the paths of the Brownian sheet, are the function spaces of dominating mixed smoothness. They appeared for the first time in the work of Babenko \cite{Babenko}, but they are also known to play an important role   in approximation theory and numerics of PDE's \cite{BG04,DTU18,SU09,Teml93}. In this context we also refer   to \cite{HansenDiss,SchTr87, Triebel12, Triebel19,Vyb06} (and the references given therein) for a comprehensive treatment.

Similarly to the univariate case, we shall need several different variants of function spaces of dominating mixed smoothness. As before, we start with the spaces of Besov type.

\begin{dfn}
Let $0<r<l\in \N$ and $1\leq p,q\leq \infty$. Then $S^r_{pq}B(\R^2)$ is the collection of all $f\in L_p(\R^2)$ such that 
\begin{align*}
\|f|L_p(\R^2)\|
&+ \left(\int_0^1 t^{-rq}\sup_{|h_1|\leq t}\|\Delta^l_{h_1,1}f|L_p(\R^2)\|^2\frac{dt}{t}\right)^{1/q}\\
&+ \left(\int_0^1 t^{-rq}\sup_{|h_2|\leq t}\|\Delta^l_{h_2,2}f|L_p(\R^2)\|^2\frac{dt}{t}\right)^{1/q}\\
&+  \left(\int_0^1\int_0^1 (t_1t_2)^{-rq}
\sup_{\substack{|h_1|\leq t_1},{|h_2|\leq t_2}}
\|\Delta^{l,l}_{h_1,h_2}f|L_p(\R^2)\|^q\frac{dt_1 dt_2}{t_1t_2}\right)^{1/q}
\end{align*}
is finite. Here we use the usual mixed version of differences resulting from Definition \ref{dfn:mixeddifference}, i.e., 
\begin{align*}
    \Delta^{l,l}_{h_1,h_2}f(t_1,t_2)=\Delta^l_{h_2,2}\left(\Delta^l_{h_1,1}f\right)(t_1,t_2)\quad\text{ and }\quad\Delta^{l+1}_{h_i,i}f(x)=\Delta^{1}_{h_i,i}\left(\Delta^{l}_{h_i,i}f\right)(x).
\end{align*}
\end{dfn} 

%\todo[inline]{nochmal checken, dass die Definition auch mit der von Glen uebereinstimmt! Triebel macht nur $\textcolor{red}{1}<p,q\leq \infty$}

The restriction of the spaces from $\R^2$ to $Q=[0,1]^2$ is done  in the same way as described in Subsection \ref{sec:Faber}, cf. \eqref{restriction}. 

We now provide a decomposition of functions $f$ from $S^r_{pq}B(Q)$ via the higher dimensional Faber system introduced in Subsection \ref{subsec-31}. 
This follows from  Theorem 3.16 of \cite{Triebel10} and its extension  \cite[Thm.~4.25]{Byrenheid}, which  ensure the following two-dimensional counterpart of Theorem \ref{thm:faber_s}, cf. also \cite[Theorem A]{Kamont2}. 

\begin{thm}\label{Faber-domi}  Let  $0<p,q\le\infty$, $p>\frac 12$, and
\begin{equation*}%\label{eq:Triebel_cond1a}
\frac{1}{p}<r<1+\frac 1p
\end{equation*}
be the admissible range for $r$. Then $f\in L_1(Q)$ lies in $S^r_{p,q}B(Q)$ if, and only if, it can be represented (with convergence in $L_1(Q)$) as
\begin{align}\label{eq:Triebel01a}
 f&=\sum_{k\in\N_{-1}^2}\sum_{m\in{\mathcal P}_k^F} \lambda_{k,m}2^{-(k_1+k_2)r}v_{k,m},
 %\quad\text{with }\lambda\in s_{p,q}^Fb(Q).
\intertext{with}
\label{eq:def_lambda_1}
\|\lambda|s_{p,q}^Fb(Q)\|&=\Bigg(\sum_{k\in \N_{-1}^2}\Big(\sum_{m\in{\mathcal P}^F_k}2^{-(k_1+k_2)}|\lambda_{k,m}|^p\Big)^{q/p}\Bigg)^{1/q}<\infty.
\end{align}
Furthermore, the representation \eqref{eq:Triebel01a} is unique with
\begin{align*}
    \lambda_{k,m}=\lambda_{k,m}(f)=2^{(k_1+k_2)r}d^2_{k,m}(f),\quad k\in\N_{-1}^2, \,m\in{\mathcal P}_k^F.
\end{align*}
\end{thm}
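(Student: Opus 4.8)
The plan is to reduce the two-dimensional statement to the one-dimensional Faber characterization of Theorem \ref{thm:faber_s} by exploiting the tensor-product structure shared by the Faber system $\{v_{k,m}\}$ and by the dominating mixed smoothness norm of $S^r_{p,q}B(Q)$. In the stated parameter range this is precisely the content of the cited results \cite[Thm.~3.16]{Triebel10} and its extension \cite[Thm.~4.25]{Byrenheid}, so I would present the argument as an assembly of these, highlighting the correspondence of notation rather than redeveloping the one-dimensional theory from scratch.

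First I would discretize the integral norm. Using the monotonicity of the moduli of smoothness in each variable separately, each of the three integral terms defining $\|f|S^r_{p,q}B(Q)\|$ is equivalent to a sum over dyadic scales $t_i=2^{-k_i}$: the quantity $\sup_{|h_i|\le 2^{-k_i}}\|\Delta^l_{h_i,i}f\|_p$ replaces the supremum inside the integral and $\int_0^1(\cdot)\tfrac{dt}{t}$ turns into $\sum_{k\ge 0}2^{krq}(\cdot)$, the convergence being guaranteed by $r>0$. This is the standard passage from the continuous to the discrete Besov norm, carried out once in each direction and once in the mixed situation.

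Next I would match the four index regimes of $k=(k_1,k_2)\in\N_{-1}^2$ with the four summands of the norm: $k=(-1,-1)$ corresponds to the $L_p$-part, the regimes $k=(k_1,-1)$ and $k=(-1,k_2)$ correspond to the two pure-difference integrals, and $k\in\N_0^2$ to the mixed integral. The local link is that in each variable the Faber coefficient is, up to the normalization $2^{(k_1+k_2)r}$, exactly a second-order difference of $f$ at the relevant dyadic scale; iterating this identification over the two variables and applying Fubini yields, term by term, the comparison $2^{-(k_1+k_2)}\sum_{m\in\mathcal P_k^F}|\lambda_{k,m}|^p\approx 2^{(k_1+k_2)rp}\,(\text{discrete modulus at scale }k)^p$. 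Forming the $\ell_q$ sum over $k$ then reproduces exactly $\|\lambda|s^F_{p,q}b(Q)\|$, establishing the claimed norm equivalence, while the explicit formula $\lambda_{k,m}=2^{(k_1+k_2)r}d^2_{k,m}(f)$ and the $L_1(Q)$-convergence come along from the uniqueness already available in the one-dimensional decomposition.

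The hard part will be the simultaneous control of the mixed regime and the boundary terms. In one variable the equivalence between Faber coefficients and the modulus of smoothness is sharp only in the range $1/p<r<1+\min(1/p,1)$; in the mixed situation this argument must be iterated, and the error terms produced when the second-order differences are localized to dyadic cells have to be summed. This summation of the $p$-normalized characteristic functions $\chi_{k,m}$ below $p=1$ is exactly where the restriction $p>1/2$ enters, and retaining not merely the norm equivalence but also the uniqueness of the coefficients and the convergence in $L_1(Q)$ throughout the iteration is what forces the appeal to the extension \cite[Thm.~4.25]{Byrenheid} beyond the classical parameter range.
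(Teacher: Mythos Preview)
The paper does not actually prove this theorem: it is stated as a direct consequence of \cite[Thm.~3.16]{Triebel10} and \cite[Thm.~4.25]{Byrenheid} (with a cross-reference to \cite[Theorem A]{Kamont2}), and no argument is given beyond that citation. Your proposal is consistent with this, invoking the same references and additionally sketching the tensor-product mechanism behind them; so in that sense you are doing more than the paper does, not less, and your outline of the discretization and the matching of the four index regimes to the four summands of the norm is an accurate description of how those cited proofs proceed.
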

The dominating mixed smoothness counterpart of the one dimensional spaces $B^{s,\alpha}_{p,q}(I)$ with logarithmic smoothness has been studied  in \cite{Triebel10}.
Unfortunately, we can not rely exclusively on the results of \cite{Triebel10} for two reasons. First,  the characterization with the Faber system presented in Theorem 3.35 in \cite{Triebel10} does not include the case $q=\infty$, which is important for our considerations. Furthermore, it will turn out later, that the way we introduce the logarithmic smoothness differs from \cite{Triebel10}. Indeed, the factor $(k_1\cdot k_2)^{-\alpha}$ used in this reference, gets replaced by $(k_1+k_2)^{-\alpha}$, which is strictly larger for $k=(k_1,k_2)\in\N^2$ and $\alpha>0$.

However, the function spaces of dominating logarithmic smoothness, which we introduce in Definition \ref{dfn:mixed:LogOrlicz}, coincide with the ones used in \cite{Kamont1}.
%\todo{$B^{r,\lambda}_{p,\infty}=Lip_p(r,\lambda)$ and $B^{r}_{\Phi_2,\infty}=Lip_{\mathcal{M}_{1/2}}(r,0)$ in \cite[Lemma 3.1]{Kamont1}?}
There these spaces were defined by differences and moduli of smoothness. Moreover, in  \cite[Lemma 3.1]{Kamont1} (based  \cite{Kamont2}) the author further obtained isomorphisms between these function spaces and corresponding sequence spaces. An equivalent Fourier-analytic characterization of these spaces still seems to be  missing. In order to avoid the technicalities we \emph{define} the function spaces by posing a condition on the coefficients appearing in the Faber system expansion \eqref{eq:Triebel01a}.

Essentially the same comment applies to Besov-Orlicz spaces of dominating mixed smoothness. Except \cite{Kamont1}, we are not aware of 
%Also in the case of Besov-Orlicz spaces $B^{s}_{\Phi_2,q}(I)$, we are not aware of 
any existing work, which introduces and systematically studies these function spaces.
%and (as far as we can tell) there do not exist Faber bases characterizations, which we could rely on.
%So instead of using a characterization with the help of the Faber system, we once again only define these spaces accordingly.
Nevertheless, comparing the one dimensional case with Theorem \ref{Faber-domi} and Theorem 3.35 in \cite{Triebel10}, the following definition seems to be well-motivated and a natural generalization. Again we restrict ourselves to the case of parameters which we need later on, i.e. $q=\infty$ and $r=1/2$.

Let us recall, that $\chi_{j,l}$ was defined as the characteristic function of $I_{j,l}$ for $j\ge 0$. We complement this notation by $I_{-1,0}=I_{-1,1}=[0,1]$ and put
\[
\chi_{k,m}(t_1,t_2)=\chi_{k_1,m_1}(t_1)\cdot \chi_{k_2,m_2}(t_2),\quad k=(k_1,k_2)\in\N_{-1}^2,\ t=(t_1,t_2)\in Q.
\]

\begin{dfn}\label{dfn:mixed:LogOrlicz}
\begin{enumerate}
    \item Let $0<p\leq\infty$ and $\alpha\in\R$.
    The function space $S^{1/2,\alpha}_{p,\infty}B(Q)$ is the collection of all $f\in C(Q)$ which can be represented by \eqref{eq:Triebel01a} with $r=1/2$
% f&=\sum_{k\in\N_{-1}^2}\sum_{m\in{\mathcal P}_k^F} \lambda_{k,m}2^{-(k_1+k_2)\frac12}v_{k,m},
 %\quad\text{with }\lambda\in s_{p,q}^Fb(Q).
and
\begin{align}\label{eq:def_lambda_2}
\|\lambda|s_{p,\infty}^{F,\alpha}b(Q)\|&=\sup_{k\in \N_{-1}^2}
%(2+k_1)^{-\alpha}(2+k_2)^{-\alpha}
{\max(1,k_1+k_2)^{-\alpha}}
\Big(\sum_{m\in{\mathcal P}^F_k}2^{-(k_1+k_2)}|\lambda_{k,m}|^p\Big)^{1/p}<\infty.
\end{align}
    \item The function space $S^{1/2}_{{\Phi_2},\infty}B(Q)$ is the collection of all $f\in C(Q)$ which can be represented by \eqref{eq:Triebel01a} with $r=1/2$ and
\begin{align}\label{eq:def_lambda_3}
\|\lambda|s_{\Phi_2,\infty}^{F}b(Q)\|&=\sup_{k\in \N_{-1}^2}\left\|\sum_{m\in{\mathcal P}^F_k}\lambda_{k,m}\chi_{k,m}(\cdot)\right\|_{\Phi_2}
<\infty.
\end{align}
\end{enumerate}
\end{dfn}
\begin{rem}
If in \eqref{eq:def_lambda_2} we had  used  the usual logarithmic Hölder spaces from \cite[Theorem 3.35]{Triebel10} the sequence space norm would be  
\begin{align*}
    \sup_{k\in \N_{-1}^2}
(2+k_1)^{-\alpha}(2+k_2)^{-\alpha}
\Big(\sum_{m\in{\mathcal P}^F_k}2^{-(k_1+k_2)}|\lambda_{k,m}|^p\Big)^{1/p}<\infty.
\end{align*}
Compared to this the advantage of our approach in \eqref{eq:def_lambda_2} is, that the sequence spaces are strictly smaller for $\alpha>0$ and we therefore obtain   better regularity results. The disadvantage on the other hand is,  that the smoothness weights in \eqref{eq:def_lambda_2} do not have any tensor product structure.
\end{rem}
\begin{OP}
For decades the role of function spaces of dominating mixed smoothness $S^r_{p,q}B(Q)$ in approximation theory has been studied intensively. We refer to \cite{DTU18} for a recent overview on various  results and a list of open problems in this field. Compared to that, much less seems to be known regarding  function spaces with logarithmic smoothness and Besov-Orlicz spaces as introduced before. In our opinion, it would be worth investigating these spaces together with their embeddings, which in turn might  shed   new light on (some of) the open problems in \cite{DTU18}.
\end{OP}
\subsection{Results of Kamont}

We now recover the results of Kamont \cite{Kamont1} (cf. also \cite{Hummel}) on the regularity of the sample paths of Brownian sheets. We combine the representation in \eqref{eq:Sheet01} (or \eqref{eq:Sheet02}) with Theorem \ref{Faber-domi} and Definition \ref{dfn:mixed:LogOrlicz} in order to show that the sample paths lie in $S^{1/2,1/2}_{\infty,\infty}B(Q)$, $S^{1/2}_{p,\infty}B(Q)$ for all $1\le p< \infty$, and $S^{1/2}_{\Phi_2,\infty}B(Q)$ almost surely. Similar to the method used in  Section \ref{sub:2.3}, we just need to verify that the condition on the coefficients in the Faber system decomposition is fulfiled almost surely, if we replace $\lambda$ in \eqref{eq:def_lambda_1},
\eqref{eq:def_lambda_2}, and \eqref{eq:def_lambda_3} by a sequence of independent normal variables $\xi$.

For the terms with $k_1=-1$ or $k_2=-1$, the conditions \eqref{eq:def_lambda_1}, \eqref{eq:def_lambda_2}, and \eqref{eq:def_lambda_3} reduce to their one-dimensional counterparts, which were already discussed in Section \ref{sub:2.3}, cf. also \eqref{eq:d2zero}. Furthermore, the same is true for the terms with $k_1=0$ or $k_2=0$. Therefore, it will be enough to handle the terms with $k=(k_1,k_2)\in\N^2.$

\noindent {\bf 1. Paths of the Brownian sheet lie almost surely in $S^{1/2,1/2}_{\infty,\infty}B(Q)$}\\
In view of Definition \ref{dfn:mixed:LogOrlicz}, we need to show that
\begin{gather*}%\label{eq:bpqae-3}
\sup_{k\in\N^2}\frac{1}{\sqrt{k_1+ k_2}}\sup_{m_1=0,\dots,2^{k_1}-1}\sup_{m_2=0,\dots,2^{k_2}-1}|\xi_{k,m}|<\infty\quad\text{almost surely},
\end{gather*}
where $\{\xi_{k,m}: k\in\N^2, 0\le m_1\le 2^{k_1}-1, 0\le m_2\le 2^{k_2}-1\}$ are independent standard Gaussian variables.
% Now \eqref{eq:bpqae-3} and \eqref{eq:bpqae-4}
%follows from \eqref{eq:cond_Levy}. And to show \eqref{eq:bpqae-4}, 
For this sake, we define the event $A^N_{k}$ as
\[
\sup_{m_1=0,\dots,2^{k_1}-1}\sup_{m_2=0,\dots,2^{k_2}-1}|\xi_{k,m}|>N\sqrt{k_1+ k_2}.
\]
Similar to \eqref{eq:AtN_upper1}, we obtain that
\[
\P(A^N_{k})\le 2^{k_1+k_2}e^{-N^2(k_1+k_2)/2},\quad k\in\N^2,
\]
and following \eqref{eq:sim1},  for every $N_0\ge 1$ we get
\begin{align*}
\P&\biggl(\sup_{k\in\N^2}\frac{1}{\sqrt{k_1+ k_2}}\sup_{m_1=0,\dots,2^{k_1}-1}\sup_{m_2=0,\dots,2^{k_2}-1}|\xi_{k,m}|=\infty\biggr)=\P\Bigl(\bigcap_{N=1}^\infty\bigcup_{k \in \N^2} A_k^N\Bigr)\\
&\le \P\Bigl(\bigcup_{k \in \N^2} A_k^{N_0}\Bigr)\le \sum_{k \in \N^2} \P\bigl(A_k^{N_0}\bigr)\le \sum_{k \in \N^2}2^{k_1+k_2}e^{-N_0^2(k_1+k_2)/2}.
\end{align*}
As the last sum again tends to zero if $N_0\to\infty$, the proof is finished. 

\noindent {\bf 2. Paths of the Brownian sheet lie almost surely in $S^{1/2}_{p,\infty}B(Q)$ for every $1\le p<\infty$}

In order to be able to apply Theorem \ref{Faber-domi} for $r=1/2$, we assume that $2<p<\infty$. Then the smaller values of $p$ follow easily by monotonicity of the function spaces with dominating mixed smoothness on domains with respect to the integrability parameter $p$. In this case  \eqref{eq:def_lambda_1} for $k\in\N^2$ reduces to %Comparing Theorem \ref{Faber-domi} with Theorem \ref{thm:Levy_sheet} we see that the paths \eqref{eq:Sheet01} belong to
%$S^{1/2}_{p,\infty}B(Q)$ almost surely 
%if, and only if,
\begin{gather*}%\label{eq:bpqae-2}
%\sup_{k_2\in\N_0}\frac{1}{2^{k_2}}\sum_{m_2=0}^{2^{k_2}-1}|\xi_{(-1,k_2),(1,m_2)}|^p<\infty,\quad 
%\sup_{k_1\in\N_0}\frac{1}{2^{k_1}}\sum_{m_1=0}^{2^{k_1}-1}|\xi_{(k_1,-1),(m_1,1)}|^p<\infty\\
%\intertext{and}
%\label{eq:bpqae-10}
\sup_{k\in\N^2}\frac{1}{2^{k_1+k_2}}\sum_{m_1=0}^{2^{k_1}-1}\sum_{m_2=0}^{2^{k_2}-1}|\xi_{k,m}|^p<\infty\quad\text{almost surely.}
\end{gather*}
%But \eqref{eq:bpqae-2} is essentially identical to \eqref{eq:char_2'}, which we proved already earlier.
We denote again by $\mu_{p}$ the $p$ absolute moment of a standard Gaussian variable.
Furthermore, for $t>0$ and $k\in\N^2$, we denote by $A^t_k$ the event
\[
\frac{1}{2^{k_1+k_2}}\sum_{m_1=0}^{2^{k_1}-1}\sum_{m_2=0}^{2^{k_2}-1}|\xi_{k,m}|^{p}-\mu_{p}\ge t.
\]
By \eqref{eq:Ajt} applied to $j=k_1+k_2$, we observe that
\[
\P(A^t_k)\le \frac{1}{t^2}\cdot \frac{\mu_{2p}-\mu_{p}^2}{2^{k_1+k_2}}.
\]  
Finally, we conclude, that for every $N_0\in\N$ it holds
\begin{align*}
\P\biggl(\sup_{k\in\N^2}\frac{1}{2^{k_1+k_2}}\sum_{m_1=0}^{2^{k_1}-1}\sum_{m_2=0}^{2^{k_2}-1}|\xi_{k,m}|^p=\infty\biggr)&=\P\Bigl(\bigcap_{N=1}^\infty\bigcup_{k \in \N^2} A_k^N\Bigr)
\le \P\Bigl(\bigcup_{k \in \N^2} A_k^{N_0}\Bigr)\le \sum_{k \in \N^2} \P\bigl(A_k^{N_0}\bigr)\\
&\le \frac{\mu_{2p}-\mu_p^2}{N_0^2}\sum_{k_1=1}^{\infty}\sum_{k_2=1}^{\infty} \frac{1}{2^{k_1+k_2}}=\frac{4(\mu_{2p}-\mu_p^2)}{N_0^2}.
\end{align*}
The last expression tends to zero if $N_0\to\infty$, which renders the result.

\noindent {\bf 3. Paths of the Brownian sheet lie almost surely in $S^{1/2}_{\Phi_2,\infty}B(Q)$}\\
Again, it is enough to show that \eqref{eq:def_lambda_3} is finite almost surely if we restrict ourselves to $k\in\N^2$ and replace $\lambda_{k,m}$ by independent normal variables $\xi_{k,m}$. Therefore, we put 
\begin{align}\label{eq:fk2}
f_k(t)=\sum_{m_1=0}^{2^{k_1}-1}\sum_{m_2=0}^{2^{k_2}-1}
\xi_{k,m}{\chi_{k,m}(t)},\quad k=(k_1,k_2)\in\N^2\quad\text{and}\quad t\in Q
\end{align}
and show that
\begin{align*}
\sup_{k\in\N^2}\|f_k\|_{\Phi_2}<\infty\quad\text{almost surely}.
\end{align*}
Again we  use the characterization of $L_{\Phi_2}(Q)$ with the non-increasing rearrangement from Theorem \ref{thm:Orlicz}. As in Section \ref{sub:2.4} we have 
\begin{align}\label{eq:fkstern}
    f^*_k(s)=\sum_{m=0}^{2^{k_1+k_2}-1}(\xi_k)^*_{m+1}\chi_{k_1+k_2,m}(s)\quad\text{for }0<s<1
\end{align}
and using \eqref{eq:flog_bound} we can estimate
\[
\|f_k\|_{\Phi_2}\le c\sup_{0\le u<k_1+k_2}\frac{f_k^*(2^{u-(k_1+k_2)})}{\sqrt{k_1+k_2-u}}
=c\sup_{0\le u<k_1+k_2}\frac{(\xi_k)^*_{2^u}}{\sqrt{k_1+k_2-u}},
\]
where $\xi_k=(\xi_{k,m}:0\le m_1\le 2^{k_1}-1, 0\le m_2\le 2^{k_2}-1)$
and $\left((\xi_{k})^*_m\right)_{m=1}^{2^{k_1+k_2}}$ is its non-increasing rearrangement. Then, by Lemma \ref{lem:tail_bounds} we get for every $K_0$ large enough
\begin{align*}
\P\Bigl(\sup_{k\in\N^2}\|f_k\|_{\Phi_2}&=\infty\Bigr)\le \sum_{0\le u<k_1+k_2<\infty}\Bigl(2e^{-K_0^2/2}\Bigr)^{(k_1+k_2-u)2^u}\cdot e^{2^u}\\
&\le c\sum_{u=0}^\infty e^{2^u}\sum_{l=u+1}^\infty l\cdot\Bigl(2e^{-K_0^2/2}\Bigr)^{(l-u)\cdot 2^u}\le c\sum_{u=0}^\infty (u+1)e^{2^u}\cdot\Bigl(2e^{-K_0^2/2}\Bigr)^{2^u},
\end{align*}
which again goes to zero if $K_0\to\infty$.
\begin{rem}
At this point, we can observe how versatile the method with the non-increasing rearrangement is performing. As input we have given a two dimensional function in \eqref{eq:fk2} and using the rearrangement we obtain in \eqref{eq:fkstern} a one-dimensional function similar to the one from  Section \ref{sub:2.4}. Now the remaining estimates done above are an analogue of the estimates of the case $d=1$ with the only difference that $k$ is replaced by $k_1+k_2$.

Clearly, a generalization to  $d>2$ follows directly from the considerations above - as long as a  characterization in terms of a Faber basis as in Theorem \ref{thm:Levy_sheet} is given.
\end{rem}

\subsection{New function spaces}
We also can carry over the one-dimensional approach of averaging operators  to the two-dimensional setting. To that end, we add to \eqref{eq:AV1} the extra case
\begin{align*}
    (A_{-1}g)(x)=g(x)\quad\text{for all }x\in I.
\end{align*}
Now, for an integrable function $g$ on $Q$ we can introduce  the averaging operators $A_kg$ for all $k=(k_1,k_2)\in\N_{-1}^2$ by
\begin{align*}
    (A_kg)(x):=A^1_{k_1}\left(A^2_{k_2}g(\cdot,x_2)\right)(x_1)\quad \text{ and } \quad  (\tilde{A}_kg)(x)=(A_k(|g|))(x), \quad x\in Q,
\end{align*}
where the upper index stands for the dimension in which the one-dimensional averaging operator is applied. In the non-borderline case when  $k\in\N_0^2$, the operator is given by
\begin{align*}
    (A_kg)(x)=\sum_{l_1=0}^{2^{k_1}-1}\sum_{l_2=0}^{2^{k_2}-1}2^{k_1+k_2}\int_{Q_{k,l}}g(t)dt\cdot\chi_{k,l}(x), 
\end{align*}
where $k\in\N_0^2$,  $Q_{k,l}=I_{k_1,l_1}\times I_{k_2,l_2}$,  and $\chi_{k,l}(t_1,t_2)=\chi_{k_1,l_1}(t_1)\chi_{k_2,l_2}(t_2)$.\\

With these preparations we can define the spaces $A_{\varepsilon}(Q)$ and $\widetilde{A}(Q)$ as follows.

\begin{dfn}%\label{dfn:NewSpaceW1}
Let $f\in C(Q)$ be represented in the Faber system by
\begin{align*}%\label{eq:Faber2d}
 f&=\sum_{k\in\N_{-1}^2}\sum_{m\in{\mathcal P}_k^F} \lambda_{k,m}2^{-(k_1+k_2)\frac{1}{2}}v_{k,m},
\end{align*}
 and define $f_j$ by 
\begin{align*}%\label{eq:fk2lambda}
f_j(t)=\sum_{m\in{\mathcal P}_j^F}
\lambda_{j,m}{\chi_{j,m}(t)},\quad j=(j_1,j_2)\in\N^2_{-1}\quad\text{and}\quad t\in Q.
\end{align*}
\begin{enumerate}
    \item For $\varepsilon>0$ the space $A_{\varepsilon}(Q)$ is the collection of all $f\in C(Q)$ with
\begin{align*}
\|f\|_{A_{\varepsilon}}&:=\sup_{j\in\N_{-1}^2}\sup_{-1\le k_1\le j_1}\sup_{-1\le k_2\le j_2} \sup_{0<t<1} \frac{2^{(j_1+j_2-(k_1+k_2))/2}}{(j_1+j_2-(k_1+k_2)+1)^\varepsilon}\cdot\frac{(A_kf_j)^*(t)}{\sqrt{\log(1/t)+1}}\\
&\approx \sup_{j\in\N_{-1}^2}\sup_{-1\le k_i\le j_i} \frac{2^{(j_1+j_2-(k_1+k_2))/2}}{(j_1+j_2-(k_1+k_2)+1)^\varepsilon}\cdot \|A_kf_j\|_{\Phi_2}<\infty.
\end{align*}

\item The space $\widetilde{A}(Q)$ is the collection of all $f\in C(Q)$ with
\begin{align*}
\|f\|_{\widetilde A}&:=\sup_{j\in\N_{-1}^2}\sup_{-1\le k_1\le j_1}\sup_{-1\le k_2\le j_2} \sup_{0<t<1} \frac{(\widetilde A_kf_j)^*(t)}{\sqrt{2^{k_1+k_2-(j_1+j_2)}\log(1/t)+1}}\\
&\approx \sup_{j\in\N_{-1}^2}\sup_{-1\le k_i\le j_i} \|{\widetilde A}_kf_j\|_{2,2^{k_1+k_2-(j_1+j_2)}}<\infty.
\end{align*}
\end{enumerate}
\end{dfn}

Using  this definition we are now able to prove the following result.

\begin{thm}%\label{thm:AtildeSheet}
Let $B$ be the Brownian sheet according to Definition \ref{dfn:BrownianSheet} and $\varepsilon>0$.
\begin{itemize}
    \item[(i)] It holds that $\|B({\cdot})\|_{A_\varepsilon}<\infty$ almost surely.
    \item[(ii)] It holds that $\|B({\cdot})\|_{\widetilde A}<\infty$ almost surely.
\end{itemize}
\end{thm}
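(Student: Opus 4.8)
The plan is to follow verbatim the strategy of the univariate Theorems~\ref{thm:Aepsilon} and~\ref{thm:13}, replacing L\'evy's one-dimensional decomposition by the decomposition of the Brownian sheet from Theorem~\ref{thm:Levy_sheet}. Substituting the Faber coefficients $\lambda_{k,m}$ by the independent standard Gaussians $\xi_{k,m}$ of \eqref{eq:Sheet01} and setting $f_j=\sum_{m\in{\mathcal P}^F_j}\xi_{j,m}\chi_{j,m}$, it suffices to show that the suprema defining $\|\cdot\|_{A_\varepsilon}$ and $\|\cdot\|_{\widetilde A}$ are almost surely finite. As in the treatment of Kamont's results, the terms carrying a boundary index $k_i=-1$ involve (via \eqref{eq:d2zero}) only the one-dimensional edge coefficients and, by the tensor structure of the averaging operators, reduce to the univariate quantities already controlled in Theorems~\ref{thm:Aepsilon} and~\ref{thm:13}. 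The genuinely two-dimensional work therefore concerns the interior indices $k,j\in\N_0^2$ with $k_i\le j_i$.

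For part (i) the decisive observation is the two-dimensional counterpart of the equidistribution used in Theorem~\ref{thm:Aepsilon}. Exploiting the tensor-product structure of the dyadic rectangles $Q_{k,l}=I_{k_1,l_1}\times I_{k_2,l_2}$, a direct computation gives for $k\le j$ (componentwise)
\begin{align*}
A_kf_j(x)=\sum_{l_1=0}^{2^{k_1}-1}\sum_{l_2=0}^{2^{k_2}-1}2^{(k_1+k_2)-(j_1+j_2)}\Biggl(\sum_{\substack{l_12^{j_1-k_1}\le m_1<(l_1+1)2^{j_1-k_1}\\ l_22^{j_2-k_2}\le m_2<(l_2+1)2^{j_2-k_2}}}\xi_{j,m}\Biggr)\chi_{k,l}(x).
\end{align*}
Each inner block is a sum of $2^{(j_1+j_2)-(k_1+k_2)}$ independent standard Gaussians, so by the $2$-stability of the normal distribution (Lemma~\ref{2-stability}) the normalized function $2^{((j_1+j_2)-(k_1+k_2))/2}A_kf_j$ is equidistributed with $f_k$. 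This is precisely the mechanism that reduced the univariate proof to a statement about $f_k$, with the scalar exponents $j$ and $k$ now replaced by the combined indices $j_1+j_2$ and $k_1+k_2$; note moreover that the rearrangement of the interior $f_k$ has the same distribution as that of its univariate analogue at level $k_1+k_2$.

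With this in hand the probability bound of Theorem~\ref{thm:Aepsilon} applies line by line: after invoking \eqref{eq:flog_bound} to pass to the rearrangement and Lemma~\ref{lem:tail_bounds} for the Gaussian tails, the resulting series is the univariate one with $j$ and $k$ replaced by $j_1+j_2$ and $k_1+k_2$. For part (ii) one proceeds exactly as in Theorem~\ref{thm:13}: here $\widetilde A_kf_j=\sum_l\nu_{k,l}\chi_{k,l}$, where the $\nu_{k,l}$ are independent and distributed as the averages from ${\mathcal G}_{2^{(j_1+j_2)-(k_1+k_2)}}$ of Definition~\ref{dfn:G_N}, and the Gaussian tail estimate is replaced by \eqref{eq:GN1}.

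The main obstacle is the combinatorial bookkeeping forced by the passage from scalar to vector indices. All the relevant bounds depend on $k$ and $j$ only through $K:=k_1+k_2$ and $J:=j_1+j_2$, so one regroups the double sums according to the values of $K$ and $J$. Since there are $K+1$ pairs $(k_1,k_2)\in\N_0^2$ with $k_1+k_2=K$ (and likewise for $J$), each term picks up an additional polynomial multiplicity of order $(K+1)(J+1)$. The point to verify is that these polynomial factors are harmless against the rapidly (doubly-exponentially) decaying factors of the form $\exp(-c\,K_0^2\,2^{m})$ already present in the univariate estimates, so that the regrouped series still tends to zero as $K_0\to\infty$ and both suprema are finite almost surely.
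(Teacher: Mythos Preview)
Your proposal is correct and follows essentially the same approach as the paper's own proof: reduce the boundary indices to the univariate theorems, use the $2$-stability (resp.\ Definition~\ref{dfn:G_N}) to identify $2^{(j_1+j_2-k_1-k_2)/2}A_kf_j$ with $f_k$ (resp.\ the coefficients of $\widetilde A_kf_j$ with ${\mathcal G}_{2^{j_1+j_2-k_1-k_2}}$-variables), pass to the non-increasing rearrangement via \eqref{eq:fkstern}, and repeat the estimates of Theorems~\ref{thm:Aepsilon} and~\ref{thm:13} with $k$ and $j$ replaced by $k_1+k_2$ and $j_1+j_2$. Your explicit bookkeeping of the polynomial multiplicities $(K+1)(J+1)$ arising from the vector indices is a detail the paper leaves implicit, but it is indeed absorbed by the exponential decay exactly as you say.
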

\begin{proof}
For both assertions we use that according to  Theorem \ref{thm:Levy_sheet} we have the  representation needed for  the Brownian sheet. Furthermore, we only have to treat the cases where $j=(j_1,j_2)\in\N_0^2$ since the other cases follow directly from the one-dimensional case. For $j=(j_1,j_2)\in\N_0^2$ and $t=(t_1,t_2)\in Q$ we set 
\begin{align*}%\label{eq:fk2lambda}
f_j(t)=\sum_{m_1=0}^{2^{j_1}-1}\sum_{m_2=0}^{2^{j_2}-1}
\xi_{j,m}{\chi_{j,m}(t)},
\end{align*}
with independent normal variables $\xi_{j,m}$.

In order to  establish the first assertion it is enough to observe that $2^{[j_1+j_2-(k_1+k_2)]\frac{1}{2}}A_kf_j(t)$ is equidistributed as $f_k(t)$, which follows again by the 2-stability, cf. Lemma \ref{2-stability}. Now the proof of this assertion follows directly from \eqref{eq:fkstern} and repeating the same arguments as in the proof of Theorem \ref{thm:Aepsilon} with $k_1+k_2$ and $j_1+j_2$ replacing $k$ and $j$ there.

For the second assertion of the theorem we use
\begin{align*}
(\widetilde{A}_kf_j)(t)=\sum_{l_1=0}^{2^{k_1}-1}\sum_{l_2=0}^{2^{k_2}-1}\nu_{k,l}\chi_{k,l}(t),
\end{align*}
where $\nu_k=(\nu_{k,l}:l_i\in\{0,\dotsc,2^{k_i}-1\})$ is a vector of independent variables from $\mathcal{G}_{2^{j_1+j_2-(k_1+k_2)}}$, see Definition \ref{dfn:G_N}.

Similar to  the proof of Theorem \ref{thm:13}, using \eqref{eq:fkstern}  we estimate with the rearrangement
\begin{align*}
\sup_{0<t<1} &\frac{(\widetilde A_kf_j)^*(t)}{\sqrt{2^{k_1+k_2-(j_1+j_2)}\log(1/t)+1}}%=\sup_{m=1,\dots,2^{k_1+k_2}} \frac{(\widetilde A_kf_j)^*(m2^{-(k_1+k_2)})}{\sqrt{2^{k_1+k_2-(j_1+j_2)}\log(2^{k_1+k_2}/m)+1}}\\
%&\le c\,\sup_{z=0,\dots,k_1+k_2-1}\frac{(\widetilde A_kf_j)^*(2^{z-(k_1+k_2)})}{\sqrt{2^{k_1+k_2-(j_1+j_2)}(k_1+k_2-z-1)+1}}%\\&
\leq c\,\sup_{z=0,\dots,k_1+k_2-1}\frac{(\nu_k^*)_{2^{z}}}{\sqrt{2^{k_1+k_2-(j_1+j_2)}(k_1+k_2-z-1)+1}},
\end{align*}
where $\left((\nu_k^*)_m\right)_{m=1}^{2^{k_1+k_2}}$ is the rearrangement of $\nu_k$ above. Now, we have reformulated again the two dimensional problem into a one dimensional one with the help of the non-increasing rearrangement and we can use the arguments of the proof of Theorem \ref{thm:13}. % and get a new regularity result for the Brownian sheet.
\end{proof}

\begin{rem}
The proposed analogy with the one-dimensional setting could be investigated even further by defining the analogues of the function spaces $D$ and ${\mathfrak D}$ from Definition \ref{def:DD}. Although the general direction seems to be quite obvious, we do not pursue it  in this paper in order to avoid further technicalities.
\end{rem}

\section{A few facts about random variables and function spaces}\label{sec:app}

\subsection{Gaussian variables}

For the sake of completeness, we present the definition of Gaussian random variables and recall some  of their basic properties.

\begin{dfn}
We say that  the random variable $\xi$  {has  standard normal distribution (or standard Gaussian distribution)} and write $\xi\sim \mathcal{N}(0,1)$, if its density function is given by 
\[
p(x)=\frac{1}{\sqrt{2\pi}}\,e^{-x^2/2},\quad x\in\R. 
\]
\end{dfn}

We will need the following two properties of Gaussian variables (and refer to \cite[Section 3.2]{Tong} for details).

\begin{lem}\label{2-stability}
Let $k\in \mathbb{N}$ and let $\xi=(\xi_1,\ldots, \xi_k)$ be a vector of independent  {standard normal random variables.}
\begin{itemize}
\item[(i)] {\bfseries \upshape (2-stability of normal distribution)}
Let $\lambda=(\lambda_1,\ldots, \lambda_k)\in \mathbb{R}^k$.  Then the random variable 
$\langle\lambda,\xi\rangle=\lambda_1\xi_1+\ldots+\lambda_k\xi_k$ is a normal variable with mean zero and variance $\sum_{i=1}^k\lambda_i^2$.
\item[(ii)] Let $1\le j \le k$ be positive integers and let $u^1,\dots,u^j\in\R^k$ be orthogonal. Then the random variables
$\langle u^1,\xi\rangle,\dots,\langle u^j,\xi\rangle$ are independent.
\end{itemize}
\end{lem}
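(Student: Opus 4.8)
The plan is to prove both parts via characteristic functions, which exploit the Gaussian structure most transparently. The one fact I would take as the starting point is that a single standard normal variable $\xi_i\sim\mathcal{N}(0,1)$ has characteristic function $\E[e^{\mathrm{i}t\xi_i}]=e^{-t^2/2}$ for $t\in\R$, obtained by completing the square in the Gaussian integral.

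For part (i), I would compute the characteristic function of $\langle\lambda,\xi\rangle$ directly. Using independence of the $\xi_i$, the expectation of the product factors, so that
\[
\E\bigl[e^{\mathrm{i}t\langle\lambda,\xi\rangle}\bigr]=\prod_{i=1}^k\E\bigl[e^{\mathrm{i}t\lambda_i\xi_i}\bigr]=\prod_{i=1}^k e^{-t^2\lambda_i^2/2}=e^{-t^2(\sum_i\lambda_i^2)/2}.
\]
This is exactly the characteristic function of a centered normal variable with variance $\sum_i\lambda_i^2$, and since the characteristic function determines the distribution uniquely, part (i) follows.

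For part (ii), I would show that the joint characteristic function of the vector $(\langle u^1,\xi\rangle,\dots,\langle u^j,\xi\rangle)$ factors into its marginals. Writing $s=(s_1,\dots,s_j)\in\R^j$ and using $\sum_a s_a\langle u^a,\xi\rangle=\langle\sum_a s_a u^a,\xi\rangle$, part (i) applied to the coefficient vector $\sum_a s_a u^a$ gives
\[
\E\Bigl[\exp\Bigl(\mathrm{i}\sum_{a=1}^j s_a\langle u^a,\xi\rangle\Bigr)\Bigr]=\exp\Bigl(-\tfrac12\bigl\|\textstyle\sum_a s_a u^a\bigr\|^2\Bigr)=\exp\Bigl(-\tfrac12\sum_{a,b}s_a s_b\langle u^a,u^b\rangle\Bigr).
\]
The orthogonality hypothesis $\langle u^a,u^b\rangle=0$ for $a\neq b$ collapses the double sum to $\sum_a s_a^2\|u^a\|^2$, so the right-hand side equals $\prod_a\exp(-\tfrac12 s_a^2\|u^a\|^2)=\prod_a\E[\exp(\mathrm{i}s_a\langle u^a,\xi\rangle)]$, the product of the marginal characteristic functions. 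Hence the components are independent.

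I expect no substantial obstacle, as both claims are classical; the only points requiring care are the two standard facts from the Fourier analysis of measures invoked above, namely that the characteristic function determines a distribution uniquely (used in part (i)) and that a random vector whose joint characteristic function factors into the product of its marginals has independent components (used in part (ii)). As a more self-contained alternative for part (ii), I would note that $(\langle u^1,\xi\rangle,\dots,\langle u^j,\xi\rangle)$ is jointly Gaussian — every linear combination of its entries is, by part (i), a single Gaussian — and that for jointly Gaussian vectors the vanishing covariances $\E[\langle u^a,\xi\rangle\langle u^b,\xi\rangle]=\langle u^a,u^b\rangle=0$ are equivalent to independence; this is the conceptual reason behind the factorization and could replace the computation if one prefers to avoid characteristic functions altogether.
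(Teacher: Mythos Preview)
Your argument is correct and complete: the characteristic-function computation for part~(i) is the standard one, and in part~(ii) the factorization of the joint characteristic function via orthogonality is exactly right, as is the alternative route through ``jointly Gaussian plus zero covariance implies independence.''

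The paper itself does not prove this lemma at all; it simply states the two properties and refers to \cite[Section~3.2]{Tong} for details. So you have supplied more than the paper does. There is nothing to compare in terms of approach, since the authors treat the result as a black box from the literature on the multivariate normal distribution.
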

We shall also make use of the following tail bounds.

\begin{lem}[{\bfseries \upshape Concentration inequalities for standard Gaussian variables}]\label{lem:tail_bounds}\hfill 
\begin{enumerate}
\item[(i)] Let $\omega$ be a standard Gaussian variable. Then
\begin{equation}\label{eq:tailgauss}
\P(|\omega|\ge x)\leq \frac{2\exp(-x^2/2)}{\sqrt{2\pi}x},\quad x>0.
\end{equation}
\item[(ii)] Let $0\le k<j$  be integers and let $\xi=(\xi_0,\dots,\xi_{2^j-1})$ be a vector of independent standard normal random variables. Then, for every $K\ge 1$,
\begin{equation*}
\P\left(\xi^*_{2^k}\ge K\sqrt{j-k}\right)\le \Bigl(2e^{-K^2/2}\Bigr)^{(j-k)2^k}\cdot {e}^{2^k}.
\end{equation*}
Here, $\xi^*=(\xi^*_1,\dots,\xi^*_{2^j})$ is the non-increasing rearrangement of $\xi.$
\end{enumerate}
\end{lem}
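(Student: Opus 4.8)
The plan is to handle the two parts separately, with part (ii) building directly on part (i).

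For (i), I would start from the representation $\P(\omega\ge x)=(2\pi)^{-1/2}\int_x^\infty e^{-t^2/2}\,dt$ and exploit the elementary inequality $t/x\ge 1$, which holds on the whole domain of integration $t\ge x>0$. Inserting this factor gives $\int_x^\infty e^{-t^2/2}\,dt\le x^{-1}\int_x^\infty t\,e^{-t^2/2}\,dt=x^{-1}e^{-x^2/2}$, since $t\,e^{-t^2/2}$ has the explicit antiderivative $-e^{-t^2/2}$. The claim \eqref{eq:tailgauss} then follows by multiplying by $2$ to account for the symmetric left tail, i.e. $\P(|\omega|\ge x)=2\,\P(\omega\ge x)$.

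For (ii), the key observation is that the order-statistic event $\{\xi^*_{2^k}\ge a\}$, with threshold $a=K\sqrt{j-k}$, is equivalent to the statement that at least $2^k$ of the $2^j$ coordinates of $\xi$ exceed $a$. I would control this by a union bound over all index subsets $S\subseteq\{0,\dots,2^j-1\}$ of cardinality exactly $2^k$: if at least $2^k$ coordinates exceed $a$, then some such $S$ has all of its coordinates exceeding $a$. Independence yields $\P(\forall i\in S:\ \xi_i\ge a)=p^{2^k}$ with $p:=\P(\omega\ge a)$, whence the probability in question is at most $\binom{2^j}{2^k}p^{2^k}$. This reduction of the order-statistic event to a union over $2^k$-subsets is the only genuinely non-routine step, and the one I would present most carefully; everything after it is elementary.

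The two remaining estimates are standard. First, the binomial coefficient satisfies $\binom{2^j}{2^k}\le (e\,2^{j}/2^k)^{2^k}=e^{2^k}\,2^{(j-k)2^k}$. Second, since $p=\P(\omega\ge a)\le\P(|\omega|\ge a)$, I can invoke part (i); because $a=K\sqrt{j-k}\ge 1$ (using $K\ge 1$ and $j-k\ge 1$), the prefactor $2/(\sqrt{2\pi}\,a)$ is at most $1$, so $p\le e^{-a^2/2}=e^{-K^2(j-k)/2}$. Multiplying the two bounds produces $e^{2^k}\,2^{(j-k)2^k}\,e^{-K^2(j-k)2^k/2}=e^{2^k}\,\bigl(2e^{-K^2/2}\bigr)^{(j-k)2^k}$, which is exactly the asserted inequality. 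Note that this argument is insensitive to whether one reads $\xi$ as the signed Gaussians or their absolute values, since part (i) bounds the larger quantity $\P(|\omega|\ge a)$ in either case.
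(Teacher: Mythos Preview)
Your proof is correct and follows essentially the same approach as the paper: the same elementary integral trick for (i), and for (ii) the same union bound over $2^k$-subsets combined with the binomial estimate $\binom{2^j}{2^k}\le (e\,2^{j-k})^{2^k}$ and the tail bound from (i). The only cosmetic difference is that you simplify the prefactor $2/(\sqrt{2\pi}\,a)\le 1$ before raising to the $2^k$-th power, whereas the paper carries it through one more line; the resulting inequalities are identical.
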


\begin{proof}
(i) The proof follows from the elementary bound
\[
\int_x^\infty e^{-u^2/2}du\le \frac{1}{x}\int_x^\infty ue^{-u^2/2}du=\frac{e^{-x^2/2}}{x}.
\]

(ii) Since $\xi_{m}\sim \mathcal{N}(0,1)$ are independent, \eqref{eq:tailgauss} together with the estimate $\displaystyle \binom{n}{k}< \left(\frac{en}{k}\right)^k$
for the binomial coefficients yields  
\begin{align*}
\P\left(\xi^*_{2^k}\ge K\sqrt{j-k}\right)&\le \binom{2^j}{2^k}\P\left(|\omega|\ge K\sqrt{j-k}\right)^{2^k}\le\left(\frac{e2^j}{2^k}\right)^{2^k}
\left(
\frac{2e^{-K^2(j-k)/2}}{\sqrt{2\pi K^2(j-k)}}
\right)^{2^k}\\
&\le \, {e^{2^k}}
\Bigl(2e^{-K^2/2}\Bigr)^{(j-k)2^k}.
%\le \Bigl(2e^{-K^2/2}\Bigr)^{(j-k)2^k}\cdot {4}^{2^k}. 
\end{align*}
\end{proof}

\subsection{Absolute values of Gaussian variables}

In this section we consider averages of absolute values of Gaussian variables.

\begin{dfn}\label{dfn:G_N}
Let $N\ge 1$ be a positive integer and let $\xi_1,\dots,\xi_N$ be independent standard normal variables. Let $\nu$ be a random variable. We write $\ \nu\sim{\mathcal G}_N\ $ if $\nu$ {has the same distribution as} 
\[
\frac{1}{N}\sum_{m=1}^N |\xi_m|.
\]
\end{dfn}

In terms of  concentration inequalities we have  the following result. 

\begin{lem}[{\bfseries \upshape Concentration inequalities for $\nu$}]\label{lem-prop-gv}\hfill 
\begin{enumerate}
\item[(i)] Let $N\ge 1$, $\nu\sim{\mathcal G}_N$, and $\omega\sim{\mathcal N}(0,1)$. Then
\[
{\P}(\nu\ge t)\le 2^N \P(\omega\ge\sqrt{N} t),\quad t>0.
\]
Moreover, if $t\ge 2\sqrt{\ln 2}$, then
\[
{\P}(\nu\ge t)\le \exp(-Nt^2/4).
\]
\item[(ii)] Let $0\le k<j$ be integers and let $\nu=(\nu_0,\dots,\nu_{2^j-1})$ be a vector of independent ${\mathcal G}_N$ variables. Then
\begin{equation}\label{eq:GN1}
\P(\nu^*_{2^k}\ge t)\le \Bigl[e\cdot 2^{j-k}\exp(-Nt^2/4)\Bigr]^{2^k},\quad t\ge 2\sqrt{\ln 2}.
\end{equation}
\end{enumerate}
\end{lem}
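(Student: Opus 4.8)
The plan is to establish the single-variable tail bound (i) first and then to deduce the order-statistic bound (ii) by a union bound over index subsets, in complete analogy with the proof of Lemma \ref{lem:tail_bounds}(ii).

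For (i), the starting point is the elementary identity $\sum_{m=1}^N|a_m|=\max_{s\in\{-1,1\}^N}\sum_{m=1}^N s_m a_m$, valid for any reals $a_1,\dots,a_N$. Applying it to $a_m=\xi_m$ and recalling that $\nu$ is equidistributed with $N^{-1}\sum_{m=1}^N|\xi_m|$, the event $\{\nu\ge t\}$ coincides with $\{\sum_m|\xi_m|\ge Nt\}=\bigcup_{s\in\{-1,1\}^N}\{\sum_m s_m\xi_m\ge Nt\}$. A union bound over the $2^N$ sign patterns, combined with the observation that each $\sum_m s_m\xi_m$ is, by the 2-stability of the normal distribution (Lemma \ref{2-stability}(i)), a centered Gaussian of variance $\sum_m s_m^2=N$, i.e.\ equidistributed with $\sqrt{N}\,\omega$, yields
\[
\P(\nu\ge t)\le 2^N\,\P(\omega\ge \sqrt{N}\,t),\qquad t>0,
\]
which is the first claim. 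For the second, I would feed in the Gaussian tail estimate from Lemma \ref{lem:tail_bounds}(i): since $\P(\omega\ge x)=\tfrac12\P(|\omega|\ge x)\le (\sqrt{2\pi}\,x)^{-1}\exp(-x^2/2)$ and the prefactor drops below $1$ once $x=\sqrt{N}\,t\ge 2\sqrt{\ln 2}$ (using $N\ge 1$), one gets $\P(\omega\ge\sqrt{N}\,t)\le\exp(-Nt^2/2)$. Hence $\P(\nu\ge t)\le\exp\!\big(-N(t^2/2-\ln 2)\big)$, and the hypothesis $t\ge 2\sqrt{\ln 2}$ is exactly what forces $t^2/2-\ln 2\ge t^2/4$, giving $\P(\nu\ge t)\le\exp(-Nt^2/4)$.

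For (ii), the event $\{\nu^*_{2^k}\ge t\}$ means that at least $2^k$ among the independent coordinates $\nu_0,\dots,\nu_{2^j-1}$ exceed $t$. A union bound over all $\binom{2^j}{2^k}$ index sets of size $2^k$, together with independence, gives $\P(\nu^*_{2^k}\ge t)\le\binom{2^j}{2^k}\P(\nu_0\ge t)^{2^k}$. Bounding $\binom{2^j}{2^k}\le (e\,2^{j-k})^{2^k}$ and inserting $\P(\nu_0\ge t)\le\exp(-Nt^2/4)$ from part (i), valid for $t\ge 2\sqrt{\ln 2}$, immediately produces $\big[e\cdot 2^{j-k}\exp(-Nt^2/4)\big]^{2^k}$, as required.

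I anticipate no serious obstacle: the only genuinely new ingredient is the sign-maximum representation of $\sum_m|\xi_m|$ in (i), which converts a sum of absolute values into a maximum of $2^N$ Gaussians to which 2-stability applies; everything else is the same Borel–Cantelli-style bookkeeping already used repeatedly in the paper.
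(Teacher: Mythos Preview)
Your proposal is correct and follows essentially the same approach as the paper: the sign-maximum identity for $\sum_m|\xi_m|$ combined with a union bound and 2-stability for part (i), and the binomial-coefficient union bound $\binom{2^j}{2^k}\le(e\cdot 2^{j-k})^{2^k}$ for part (ii). The paper is terser on the second inequality of (i) (it simply cites the Gaussian tail bound), whereas you spell out the algebra $t^2/2-\ln 2\ge t^2/4$ for $t\ge 2\sqrt{\ln 2}$, but the argument is the same.
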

\begin{proof}
(i) We use the 2-stability of normal variables from Lemma \ref{2-stability} and estimate
\begin{align*}
\P(\nu\ge t)&=\P\Bigl(\sum_{m=1}^N|\xi_m|\ge Nt\Bigr)=\P\Bigl(\exists\varepsilon\in\{-1,+1\}^N:\sum_{m=1}^N \varepsilon_m\xi_m\ge Nt\Bigr)\\
&\le 2^N\P\Bigl(\sum_{m=1}^N\xi_m\ge Nt\Bigr)=2^N\P(\omega\ge \sqrt{N}t).
\end{align*}
The second statement follows by \eqref{eq:tailgauss}. 

(ii) The proof resembles very much the proof of Lemma \ref{lem:tail_bounds}\,(ii). We estimate
\begin{align*}
\P(\nu^*_{2^k}\ge t) &\le \binom{2^j}{2^k} \P(\nu_1\ge t)^{2^k} \le 
%c\, \frac{2^{j2^j}-2^{k2^k}}{(2^j-2^k)^{2^j-2^k}}\exp(-Nt^2/4\cdot 2^k)\\
\Bigl(\frac{e\, 2^j}{2^k}\Bigr)^{2^k}\exp(-Nt^2/4\cdot 2^k)
=\Bigl[e\cdot 2^{j-k}\exp(-Nt^2/4)\Bigr]^{2^k}.
\end{align*}
\end{proof}

\subsection{Integrated absolute Wiener process}\label{sec:IW}

Let
\begin{equation}\label{eq:IntW01}
{\mathcal W}=\int_0^1 |W_s|\, ds
\end{equation}
be the integral of the absolute value of the Wiener process. The distribution of the random variable ${\mathcal W}$
is rather complicated, cf. \cite{K46,T93}. For our purpose, it will be sufficient to obtain concentration inequalities for ${\mathcal W}$ similar to those
for Gaussian and absolute values of Gaussian variables as given in Lemma \ref{lem:tail_bounds} and Lemma \ref{lem-prop-gv}. We use as a tool the integral of the square of the Wiener process
\begin{equation}\label{eq:IntW03}
{\mathcal S}=\left(\int_0^1 |W_s|^2\, ds\right)^{1/2} 
\end{equation}
in order to  get the tail bounds for \eqref{eq:IntW01}. 

\begin{lem}[{\bfseries \upshape Concentration inequalities for ${\mathcal W}$}]\label{lem:IW}\hfill 
\begin{enumerate}
\item[(i)] Let $t\ge 1$. Then
\begin{equation}\label{eq:IW01}
\P\Bigl({\mathcal W}\ge \frac{1}{2}+t\Bigr)
%\le \P({\mathcal S}\ge \frac{t}{2}+\frac{1}{\sqrt{2}})
\le \exp(-t^2).
\end{equation}
\item[(ii)] There is a constant $c>0$ such that for every positive integer $N\ge 1$ and $(\mathcal{W}_j)_{j=1}^N$ i.i.d. as in \eqref{eq:IntW01}  it holds 
\begin{equation}\label{eq:IW02}
\P\Bigl(\frac{1}{N}\sum_{j=1}^N {\mathcal W}_j>t\Bigr)\le \exp(1-c\,Nt^2),\quad t>1.
\end{equation}
\item[(iii)] Let $0\le k<j$ be two integers and let $\nu=(\nu_0,\dots,\nu_{2^j-1})$ be a vector of independent random variables,
each equidistributed with $\frac{1}{N}\sum_{j=1}^N {\mathcal W}_j$. Then
\begin{equation}\label{eq:IntW04}
\P(\nu^*_{2^k}\ge t) %&\le \binom{2^j}{2^k} \P(\nu_1\ge t)^{2^k} \le 
%\Bigl(\frac{e\, 2^j}{2^k}\Bigr)^{2^k}\exp(-Nt^2/4\cdot 2^k)
\le \Bigl[e^2\cdot 2^{j-k}\exp(-c Nt^2)\Bigr]^{2^k},\quad t>1.
\end{equation}
\end{enumerate}
\end{lem}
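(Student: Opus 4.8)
The plan is to establish the three parts in the order of their logical dependence, working backwards: part~(iii) will be a formal consequence of~(ii), part~(ii) a consequence of~(i), and all the genuine probabilistic content lives in part~(i).

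For~(i) I would first pass from $\mathcal{W}$ to the more tractable $\mathcal{S}$ via the pointwise Cauchy--Schwarz inequality $\mathcal{W}=\int_0^1|W_s|\,ds\le\bigl(\int_0^1|W_s|^2\,ds\bigr)^{1/2}=\mathcal{S}$, so that it suffices to bound $\P(\mathcal{S}\ge\tfrac12+t)=\P(\mathcal{S}^2\ge(\tfrac12+t)^2)$. The next step is to diagonalise $\mathcal{S}^2$ through the Karhunen--Loève expansion $W_s=\sqrt2\sum_{n\ge1}\frac{\sin((n-1/2)\pi s)}{(n-1/2)\pi}\,\xi_n$ with $\xi_n$ i.i.d.\ standard Gaussian, which yields $\mathcal{S}^2=\sum_{n\ge1}\lambda_n\xi_n^2$ with $\lambda_n=\frac{1}{(n-1/2)^2\pi^2}$. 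Here one records the three numerical facts $\sum_n\lambda_n=\E\mathcal{S}^2=\tfrac12$, $\sum_n\lambda_n^2=\tfrac16$, and $\lambda_1=\tfrac{4}{\pi^2}$. Thus $\mathcal{S}^2$ is a weighted sum of independent $\chi^2_1$ variables, whose Laplace transform $\E e^{\theta\mathcal{S}^2}=\prod_n(1-2\theta\lambda_n)^{-1/2}$ is finite for $0<\theta<\tfrac{1}{2\lambda_1}=\tfrac{\pi^2}{8}$. A Chernoff bound (equivalently, a Laurent--Massart estimate $\P(\mathcal{S}^2\ge\tfrac12+2\|\lambda\|_2\sqrt{x}+2\lambda_1 x)\le e^{-x}$) with the choice $x=t^2$ then produces the threshold $\tfrac12+\tfrac{2}{\sqrt6}t+\tfrac{8}{\pi^2}t^2$, and it remains only to verify the elementary inequality $(\tfrac12+t)^2\ge\tfrac12+\tfrac{2}{\sqrt6}t+\tfrac{8}{\pi^2}t^2$ for $t\ge1$, which gives $\P(\mathcal{S}\ge\tfrac12+t)\le e^{-t^2}$.

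Given~(i), part~(ii) follows by the standard Chernoff argument for sums of independent subgaussian variables: the upper tail from~(i) together with $\mathcal{W}\ge0$ yields a moment generating function estimate $\E e^{\theta(\mathcal{W}-\E\mathcal{W})}\le e^{K\theta^2}$ for $\theta>0$ with an absolute constant $K$, whence by independence and optimisation over $\theta$ one gets $\P\bigl(\tfrac1N\sum_{j=1}^N\mathcal{W}_j>t\bigr)\le\exp\bigl(-\tfrac{N(t-\E\mathcal{W})^2}{4K}\bigr)$. Since $\E\mathcal{W}=\tfrac{2\sqrt2}{3\sqrt\pi}<1$, for $t>1$ one has $t-\E\mathcal{W}\ge(1-\E\mathcal{W})\,t$, so the bound is of the form $\exp(-cNt^2)$, and the harmless slack constant $1$ gives the stated $\exp(1-cNt^2)$. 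Part~(iii) is then the exact analogue of Lemma~\ref{lem-prop-gv}(ii): the event $\{\nu^*_{2^k}\ge t\}$ forces at least $2^k$ of the $2^j$ independent coordinates to exceed $t$, so the union bound over the $\binom{2^j}{2^k}$ index sets, together with $\binom{2^j}{2^k}<(e\,2^{j-k})^{2^k}$ and the single-coordinate estimate $\P(\nu_1\ge t)\le e^{1-cNt^2}$ from~(ii), gives $\P(\nu^*_{2^k}\ge t)\le(e\,2^{j-k})^{2^k}\bigl(e^{1-cNt^2}\bigr)^{2^k}=[e^2\,2^{j-k}\exp(-cNt^2)]^{2^k}$, which is precisely~\eqref{eq:IntW04}.

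The main obstacle is the sharp constant in~(i). Since $\E\mathcal{W}=\tfrac{2\sqrt2}{3\sqrt\pi}>\tfrac12$, one cannot simply centre $\mathcal{W}$ at $\tfrac12$; it is the passage to $\mathcal{S}^2$, whose mean is \emph{exactly} $\tfrac12$, that makes the constant $\tfrac12$ in the statement the correct one. Moreover, for large $t$ the weighted $\chi^2$ tail is governed not by its Gaussian part but by the largest eigenvalue, decaying like $e^{-t^2/(2\lambda_1)}$; the decisive point is that $2\lambda_1=\tfrac{8}{\pi^2}<1$, so this large-deviation rate $\tfrac{\pi^2}{8}>1$ still beats the target exponent $t^2$. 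Had the top eigenvalue satisfied $\lambda_1\ge\tfrac12$, the bound $e^{-t^2}$ would fail for large $t$; thus the whole argument hinges on $\lambda_1=\tfrac{4}{\pi^2}<\tfrac12$, and once~(i) is secured with the right constant, parts~(ii) and~(iii) are routine.
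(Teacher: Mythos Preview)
Your proposal is correct and follows essentially the same route as the paper: for (i) you pass from $\mathcal{W}$ to $\mathcal{S}$ via Cauchy--Schwarz, diagonalise $\mathcal{S}^2$ through the Karhunen--Lo\`eve expansion, and apply the Laurent--Massart bound with the same constants $\|\alpha\|_1=\tfrac12$, $\|\alpha\|_2=\tfrac{1}{\sqrt6}$, $\|\alpha\|_\infty=\tfrac{4}{\pi^2}$; for (ii) you deduce subgaussianity of $\mathcal{W}-\E\mathcal{W}$ and run a Hoeffding/Chernoff argument (the paper cites Vershynin's Proposition~5.10 rather than spelling out the MGF bound, but the content is identical); and (iii) is the same binomial union-bound computation as in Lemma~\ref{lem-prop-gv}(ii). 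Your explicit verification of the elementary inequality $(\tfrac12+t)^2\ge\tfrac12+\tfrac{2}{\sqrt6}t+\tfrac{8}{\pi^2}t^2$ for $t\ge1$ and your remark on the role of $\lambda_1<\tfrac12$ make the sharp constant in (i) more transparent than the paper's presentation.
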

\begin{proof} \emph{Step 1.}
We use the Karhunen-Lo\`eve expansion of the Wiener process (see \cite{Karhunen} or \cite[Chapter XI]{Loeve}), i.e., 
\begin{equation}\label{eq:IntW02}
W_t=\sqrt{2}\sum_{k=1}^\infty Z_k \frac{\sin((k-1/2)\pi t)}{(k-1/2)\pi},\quad t\in[0,1],
\end{equation}
where $(Z_k)_{k=1}^\infty$ is a sequence of independent standard Gaussian variables and the series converges in
$L_2$ uniformly over $t\in[0,1]$. We insert \eqref{eq:IntW02} into \eqref{eq:IntW03} and obtain
\[
{\mathcal W}^2\le {\mathcal S}^2=\int_0^1|W_s|^2ds=\sum_{k=1}^\infty \frac{Z_k^2}{(k-1/2)^2\pi^2}=\sum_{k=1}^\infty \alpha_k Z_k^2,
\]
where $\displaystyle \alpha_k=\frac{1}{(k-1/2)^2\pi^2}$. By \cite[Lemma 1]{LM00},
\begin{equation*}
\P\left(\sum_{k=1}^\infty \alpha_k(Z_k^2-1)\ge 2\|\alpha\|_2\sqrt{x}+2\|\alpha\|_\infty x\right)\le \exp(-x)
\end{equation*}
for every $x>0.$ Using that
\[
\|\alpha\|_1=\frac{1}{2},\quad \|\alpha\|_2=\frac{1}{\sqrt{6}},\quad \text{and}\quad  \|\alpha\|_\infty=\frac{4}{\pi^2}, 
\]
we obtain
\[
\P\Bigl({\mathcal W}\ge \frac{1}{2}+t\Bigr)\le \P\Bigl({\mathcal S}\ge\frac{1}{2}+t\Bigr)\le \exp\bigl(-t^2\bigr),\quad t\ge 1,
\]
which gives \eqref{eq:IW01}.\\
\emph{Step 2.} Using the properties of the Brownian motion, cf.  Definition \ref{def:BM}, we obtain
\[
\E {\mathcal W}=\int_0^1 \E|W_s|ds=\int_0^1 \E|W_s-W_0|ds=
%\int_0^1\int_\R\frac{1}{\sqrt{2\pi s}}|x|e^{-\frac{x^2}{2s}}dxds=
\sqrt{\frac{2}{\pi}}\int_0^1\sqrt{s}ds=\sqrt{\frac{2}{\pi}}\cdot \frac{2}{3}\in\left(\frac12,1\right).
\]

Put $X={\mathcal W}-\E {\mathcal W}$. Then $\P(|X|>t)\le 1$ for $t\le \sqrt{2}$ and
\[
\P(|X|>t)=\P({\mathcal W}>t+\E {\mathcal W})\le \P\Bigl({\mathcal W}> t+\frac{1}{{2}}\Bigr)
\le \exp(-t^2)
\]

for $t>\sqrt{2}.$ We conclude that $\P(|X|>t)\le \exp(1-t^2/2)$ for all $t>0$, i.e., $X$ is a centered
subgaussian variable, cf. \cite[Definition 5.7]{Ver}.
Therefore, by the Hoeffding-type inequality, cf. \cite[Proposition 5.10]{Ver}, there is a constant $c_1>0$, such that
\[
\P\Bigl(\frac{1}{N}\sum_{j=1}^N {\mathcal W}_j>t+\E {\mathcal W}\Bigr)\le \exp(1-c_1\,Nt^2),\quad t>0,
\]
which, in turn, implies \eqref{eq:IW02} {for $t>1$ and $c>0$ small enough}.
\\

\emph{Step 3.} Finally, we conclude that if $\nu=(\nu_0,\dots,\nu_{2^j-1})$ is a vector of independent random variables, each equidistributed with
$\frac{1}{N}\sum_{j=1}^N {\mathcal W}_j$, then
\begin{equation*}%\label{eq:IntW04}
\P(\nu^*_{2^k}\ge t) %&\le \binom{2^j}{2^k} \P(\nu_1\ge t)^{2^k} \le 
%\Bigl(\frac{e\, 2^j}{2^k}\Bigr)^{2^k}\exp(-Nt^2/4\cdot 2^k)
\le \Bigl[e^2\cdot 2^{j-k}\exp(-c_2 Nt^2)\Bigr]^{2^k}
\end{equation*}
by an argument quite similar to the proof of Lemma \ref{lem-prop-gv} (ii).
\end{proof}

\subsection{Orlicz spaces}

Let us recall, that the Orlicz function $\Phi_2$ was defined in \eqref{eq:Orl01} and the corresponding Orlicz space $L_{\Phi_2}$ was introduced in \eqref{eq:Orl02}.
The following characterization is a special case of \cite[Theorem 10.3]{BenettRud80} adapted to {the domain $[0,1]^d$.}
We include its short proof to make our presentation self-contained.

\begin{thm}\label{thm:Orlicz}
A measurable function $f$ on $[0,1]^d$ belongs to $L_{\Phi_2}([0,1]^d)$ if, and only if, there exists $c>0$ such that
\begin{equation}\label{eq:Orlicz1}
f^*(t)\le c\sqrt{\log(1/t)+1},\quad 0<t<1.
\end{equation}
Moreover, the expression
\begin{align*}
    \|f\|_{\Phi_2^*}:=\sup_{0<t<1}\frac{f^*(t)}{\sqrt{\log(1/t)+1}}
\end{align*}
is equivalent to $\|f\|_{\Phi_2}$. %{\bf IS THIS A NORM?}
\end{thm}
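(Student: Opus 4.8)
The plan is to reduce everything to a one-dimensional statement about $f^*$ and then to elementary estimates. First I would record that the defining integral is rearrangement invariant: since $|f|$ and its non-increasing rearrangement $f^*$ are equimeasurable on $[0,1]^d$ (resp. $(0,1)$) and $\Phi_2$ is non-decreasing with $\Phi_2(0)=0$, we have $\int_{[0,1]^d}\Phi_2(|f|/\lambda)\,dt=\int_0^1\Phi_2(f^*(s)/\lambda)\,ds$ for every $\lambda>0$. Consequently $\|f\|_{\Phi_2}$ depends only on $f^*$, and it suffices to prove the two one-sided inequalities $\|f\|_{\Phi_2^*}\le C_1\|f\|_{\Phi_2}$ and $\|f\|_{\Phi_2}\le C_2\|f\|_{\Phi_2^*}$ with absolute constants $C_1,C_2$; the equivalence and the characterization \eqref{eq:Orlicz1} then follow at once.

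For the bound $\|f\|_{\Phi_2^*}\le C_1\|f\|_{\Phi_2}$, set $\lambda=\|f\|_{\Phi_2}$. A short monotone-convergence argument (letting $\lambda'\downarrow\lambda$, so that $\Phi_2(f^*/\lambda')\uparrow\Phi_2(f^*/\lambda)$) shows the infimum is attained, so $\int_0^1\Phi_2(f^*(s)/\lambda)\,ds\le1$, i.e. $\int_0^1\exp(f^*(s)^2/\lambda^2)\,ds\le2$. Now I would exploit that $f^*$ is non-increasing: for fixed $t\in(0,1)$,
\[
2\ge\int_0^t\exp\Bigl(\frac{f^*(s)^2}{\lambda^2}\Bigr)\,ds\ge t\exp\Bigl(\frac{f^*(t)^2}{\lambda^2}\Bigr),
\]
whence $f^*(t)^2\le\lambda^2\log(2/t)\le\lambda^2(1+\log2)(\log(1/t)+1)$. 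Dividing by $\sqrt{\log(1/t)+1}$ and taking the supremum over $t$ gives $\|f\|_{\Phi_2^*}\le\sqrt{1+\log2}\,\|f\|_{\Phi_2}$.

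For the reverse bound I would simply insert the hypothesis $f^*(t)\le c\sqrt{\log(1/t)+1}$ with $c=\|f\|_{\Phi_2^*}$ into the defining integral. With $\beta=c^2/\lambda^2$ and any $\beta<1$,
\[
\int_0^1\exp\Bigl(\frac{f^*(t)^2}{\lambda^2}\Bigr)\,dt\le e^{\beta}\int_0^1 t^{-\beta}\,dt=\frac{e^{\beta}}{1-\beta}.
\]
Fixing $\beta_0\in(0,1)$ with $e^{\beta_0}/(1-\beta_0)=2$ and taking $\lambda=c/\sqrt{\beta_0}$ makes the last quantity equal to $2$, so $\int_0^1\Phi_2(f^*/\lambda)\,dt\le1$ and hence $\|f\|_{\Phi_2}\le\beta_0^{-1/2}\|f\|_{\Phi_2^*}$. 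The argument is essentially self-contained; the only points needing care are the equimeasurability reduction in the first step and the attainment of the infimum, while the constant $\beta_0$ is harmless. I expect no serious obstacle — the entire content is the monotonicity trick for the lower bound and the explicit integral $\int_0^1 t^{-\beta}\,dt$ for the upper bound.
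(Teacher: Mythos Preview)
Your proof is correct and follows essentially the same route as the paper's: both directions use the equimeasurability $\int_{[0,1]^d}\Phi_2(|f|/\lambda)=\int_0^1\Phi_2(f^*/\lambda)$, then the monotonicity bound $\int_0^t\ge t\cdot(\text{value at }t)$ for one direction and the explicit computation of $\int_0^1 t^{-\beta}\,dt$ for the other. The only cosmetic differences are that the paper works with $\Phi_2=\exp(\cdot)-1$ throughout (getting $\le 1$ rather than your $\le 2$) and simply fixes $\lambda=2c$ (i.e.\ $\beta=1/4$) instead of invoking the existence of $\beta_0$; your extra care about attainment of the infimum and the rearrangement reduction is a small plus.
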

\begin{proof}
If $f$ satisfies \eqref{eq:Orlicz1} for some $c$, then we obtain for $\lambda=2c$
\begin{align*}
\int_{[0,1]^d}\Bigl[\exp(f(x)^2/\lambda^2)-1\Bigr]dx&=\int_0^1\Bigl[\exp(f^*(t)^2/\lambda^2)-1\Bigr]dt \le
\int_0^1\Bigl[\exp\Bigl(\frac{c^2(\log(1/t)+1)}{\lambda^2}\Bigr)-1\Bigr]dt\\
&=\int_0^1 \Bigl[\exp\Bigl(\frac{\log(1/t)+1}{4}\Bigr)-1 \Bigr] dt<1.
\end{align*}
Hence, $f\in L_{\Phi_2}([0,1]^d)$ and $\|f\|_{\Phi_2}\le 2 \|f\|_{\Phi_2^*}$.

If, on the other hand, $f\in L_{\Phi_2}([0,1]^d)$ with $\|f\|_{{\Phi_2}}\le 1$, then
\begin{align*}
1&\ge \int_{[0,1]^d} \Phi_2(|f(x)|)dx=
\int_0^1\Phi_2(f^*(s))ds\ge \int_0^t \Bigl[\exp(f^*(s)^2)-1\Bigr]ds\\
&\ge t\Bigl[\exp(f^*(t)^2)-1\Bigr],
\end{align*}
i.e., $f^*(t)^2\le \log(1+1/t)\le \log(e/t)=1+\log(1/t)$ for every $0<t<1$ and \eqref{eq:Orlicz1} follows.
\end{proof}

%In order to define the Besov-Orlicz spaces $B^{1/2}_{{\Phi_2},\infty}(I)$ which we need later on,
We need also another characterization of the norm of
$L_{\Phi_2}([0,1]^d)$. Its proof can be found in \cite[Theorem  3.4]{Ciesiel93} but again we include it for the reader's convenience.

\begin{thm}\label{thm:phi2}
A measurable function $f$ on $[0,1]^d$ belongs to $L_{\Phi_2}([0,1]^d)$ if, and only if, there exists $c>0$ such that
\begin{align*}
    \|f\|_p\leq c \sqrt p \quad\text{holds for all}\quad  p\geq 1.
\end{align*}
Moreover,
\begin{align}\label{eq:wurzelnorm}
    \|f\|_{(\Phi_2)}:=\sup_{p\geq1}\frac{\|f\|_p}{\sqrt{p}} 
\end{align}
is an equivalent norm on $L_ {\Phi_2}([0,1]^d)$.
\end{thm}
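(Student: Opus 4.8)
The plan is to compare the two norms through the power-series expansion of $\Phi_2$. Since $\Phi_2(t)=\exp(t^2)-1=\sum_{k=1}^\infty t^{2k}/k!$ has only nonnegative terms, the monotone convergence theorem (equivalently Tonelli) lets me integrate termwise and rewrite the defining integral of the Luxemburg norm as
\[
\int_{[0,1]^d}\Phi_2\Bigl(\frac{|f(x)|}{\lambda}\Bigr)\,dx=\sum_{k=1}^\infty \frac{1}{k!\,\lambda^{2k}}\int_{[0,1]^d}|f(x)|^{2k}\,dx=\sum_{k=1}^\infty \frac{\|f\|_{2k}^{2k}}{k!\,\lambda^{2k}}.
\]
Thus everything reduces to controlling the even moments $\|f\|_{2k}$, and the whole argument hinges on the Stirling-type bounds $(k/e)^k\le k!\le k^k$.

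For the implication that $\|f\|_p\le c\sqrt p$ forces $f\in L_{\Phi_2}([0,1]^d)$, I would take $c=\|f\|_{(\Phi_2)}$ and insert $\|f\|_{2k}\le c\sqrt{2k}$ into the series, so that its $k$th term is at most $c^{2k}(2k)^k/(k!\,\lambda^{2k})$. Using $k!\ge (k/e)^k$ gives $(2k)^k/k!\le (2e)^k$, whence the series is dominated by the geometric series $\sum_{k\ge1}\bigl(2ec^2/\lambda^2\bigr)^k$. This sum is at most $1$ as soon as $2ec^2/\lambda^2\le 1/2$, i.e. $\lambda\ge 2\sqrt e\,c$, and therefore $\|f\|_{\Phi_2}\le 2\sqrt e\,\|f\|_{(\Phi_2)}$.

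For the converse I would normalize $\|f\|_{\Phi_2}\le 1$; letting $\lambda\downarrow 1$ and applying monotone convergence shows $\int_{[0,1]^d}\Phi_2(|f|)\,dx\le 1$, so the displayed series with $\lambda=1$ is at most $1$ and in particular each term satisfies $\|f\|_{2k}^{2k}\le k!\le k^k$, that is $\|f\|_{2k}\le\sqrt k$. Because $[0,1]^d$ has measure one, the $L_p$ norms are nondecreasing in $p$; given $p\ge 1$ I choose $k=\lceil p/2\rceil$, so that $p\le 2k$ and, using $\lceil p/2\rceil\le p$ for every $p\ge 1$, I get $\|f\|_p\le\|f\|_{2k}\le\sqrt k\le\sqrt p$. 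By homogeneity this yields $\|f\|_{(\Phi_2)}\le\|f\|_{\Phi_2}$, and combining the two estimates establishes the claimed norm equivalence.

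The only genuinely delicate points are the justification of the termwise integration, which is immediate from monotone convergence since all summands are nonnegative, and the passage from the discrete family of exponents $2k$ to an arbitrary real $p$. I expect the latter to be the main technical nuisance, but it is handled cleanly by the monotonicity of the $L_p$ norms on a probability space together with the crude comparison $\lceil p/2\rceil\le p$; one should also note the standard fact that $\|f\|_{\Phi_2}\le 1$ entails $\int_{[0,1]^d}\Phi_2(|f|)\,dx\le 1$, which is exactly what makes the converse direction work.
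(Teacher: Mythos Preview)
Your proof is correct and follows essentially the same approach as the paper's: both expand $\Phi_2$ into its power series, integrate termwise via monotone convergence, and combine the Stirling-type bounds $(k/e)^k\le k!\le k^k$ with the monotonicity of $L_p$-norms on a probability space to pass from even integer exponents to general $p\ge 1$. The only cosmetic differences are that the paper invokes Stirling's formula with an auxiliary constant $\lambda_0$ (yielding the constant $\sqrt{2e(1+\lambda_0)}$) whereas you use the elementary inequality $k!\ge(k/e)^k$ directly to obtain the explicit constant $2\sqrt e$, and the paper splits the range of $p$ into $[1,2)$ and $[2,\infty)$ while your single estimate $\lceil p/2\rceil\le p$ handles all $p\ge 1$ at once.
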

\begin{proof}
First of all we show $\|f\|_{(\Phi_2)}\leq\|f\|_{\Phi_2}$. To that end, let $f\in L_{\Phi_2}([0,1]^d)$ be given with $\|f\|_{\Phi_2}\leq1$. Then by the power series of the
exponential function we estimate for any $n\in\N$ 
\begin{align*}%\label{eq:Henk01}
    1\geq\int_{[0,1]^d}\Phi_2(|f(x)|)dx=\int_{[0,1]^d}\sum_{k=1}^\infty\frac{|f(x)|^{2k}}{k!}dx
    \geq\frac{1}{n!}\|f\|_{2n}^{2n}.
\end{align*}
Now using $n!\leq n^n$ we obtain
\begin{align*}
    \frac{\|f\|_{2n}^{2n}}{n^n}\leq1\quad
    \text{which is equivalent to}\quad
    \frac{\|f\|_{2n}}{\sqrt{2n}}\leq\frac{1}{\sqrt2}.
\end{align*}
If $1\le p<2$, we obtain
\[
\frac{\|f\|_p}{\sqrt{p}}\le \|f\|_2\le 1.
\]
If $2<p<\infty$, we choose the unique $n\in\N$ with $n\ge 2$ such that $2(n-1)< p\leq2n$ and obtain
\begin{align*}
    \frac{\|f\|_p}{\sqrt{p}}\leq\frac{\|f\|_{2n}}{\sqrt{2(n-1)}}\le \frac{\sqrt{n}}{\sqrt{2(n-1)}}\leq1,
\end{align*}
which finishes the first step.

In the second step, we are going to show $\|f\|_{\Phi_2}\leq C\|f\|_{(\Phi_2)}$. To that end, we choose $f$ such that \eqref{eq:wurzelnorm} is finite.
Using Stirling's formula we can fix an $\lambda_0>1$ such that $\lambda_0n!\geq(n/e)^n$  holds for all $n\in\N$. We estimate with
the help of the power series of the exponential function
\begin{align*}
    \int_{[0,1]^d}\Phi_2\left(\frac{|f(x)|}{\lambda}\right)dx&=\int_{[0,1]^d}\sum_{n=1}^\infty\frac{|f(x)|^{2n}}{\lambda^{2n}n!}dx\leq\lambda_0\sum_{n=1}^\infty\frac{(2\e)^n}{\lambda^{2n}}\frac{\|f\|_{2n}^{2n}}{(2n)^n}\\
    \intertext{and now choosing $\lambda=\sqrt{2e(1+\lambda_0)}\|f\|_{(\Phi_2)}$ gives}
 \int_{[0,1]^d}\Phi_2\left(\frac{|f(x)|}{\lambda}\right)dx  &\leq\lambda_0\sum_{n=1}^\infty(1+\lambda_0)^{-n}=1.
\end{align*}
This shows $\|f\|_{\Phi_2}\leq\lambda=\sqrt{2e(1+\lambda_0)}\|f\|_{(\Phi_2)}$ and finishes the proof.
\end{proof}

In the refined analysis concerning the regularity of Brownian paths, we need also a generalization of Theorem \ref{thm:Orlicz}.
First, we define a scale of Orlicz functions $\Phi_{2,A}$, where $0<A\le 1$ is a real parameter, via 
\begin{equation}\label{eq:Orlicz_new1}
\Phi_{2,A}(u)=\begin{cases}
u^2,\quad &0<u\le 1,\\
\displaystyle\exp\Bigl(\frac{u^2-1}{A}\Bigr),\quad &1< u<\infty.
\end{cases}
\end{equation}
It is easy to see that this scale of Orlicz functions fulfills the following estimates for all $u>0$ and all $0<A\leq1$ 
%\begin{align*}
%    \frac1A\Phi_{2,A}(u)\leq e^{\frac{u^2}{A}}-1=\Phi_2\left(\frac{u}{\sqrt{A}}\right)\leq\frac{e^{\frac{1}{A}}}{A}\Phi_{2,A}(u).
%\end{align*}
\begin{align*}
    \Phi_2\left(\frac{u}{\sqrt{2}}\right)
    \le \Phi_{2,1}(u)\le \Phi_{2,A}(u)\le\Phi_2\left(\frac{u}{\sqrt{A}}\right).
\end{align*}

%\todo[inline]{die erste Ungleichung ist mir immer noch nicht klar:
%\begin{align*}
%0<u\le 1: & \Phi_2(u/\sqrt{2})=\exp(u^2/2)-1\le \sum_{n=1}^\infty \frac{(u^2/2)^n}{n!}\le u^2\sum_{n=1}^\infty \frac{1}{2^nn!}\le u^2\\
%1<u<\infty: & \text{convexity:} \frac{\exp(u^2-1)+\exp(0)}{2}\ge \exp(\frac{u^2-1}{2})\\
%&\exp(u^2-1)\ge 2\exp(u^2/2-1/2)-1\ge \exp(u^2/2)-1
%\end{align*}}
Therefore the Orlicz space associated to $\Phi_{2,A}$ coincides with $L_{\Phi_2}$
for every $0<A\le 1$. Nevertheless, the equivalence constants in the  respective norms will depend on $A$. It is quite interesting (and of a crucial importance for us) that the following simple expression 
\begin{equation}\label{eq:OrliczA}
\|f\|_{2,A}^{(1)}:=\sup_{0<t<1}\frac{f^*(t)}{\sqrt{A\log(1/t)+1}}
\end{equation}
is equivalent to the Orlicz norm associated with $\Phi_{2,A}$ (which we denote by $\|f\|_{2,A}$) and that the equivalence constants are independent on the parameter $A\in(0,1]$.

\begin{thm}\label{thm:OrliczA}
Let $0<A\le 1$ and let $f$ be a measurable function on $[0,1]^d$. Then
\[
\|f\|_{2,A}^{(1)}\le \|f\|_{2,A}\le 4\,\|f\|_{2,A}^{(1)}.
\]
\end{thm}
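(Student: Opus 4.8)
The plan is to mimic the two-sided argument in the proof of Theorem~\ref{thm:Orlicz}, now keeping explicit track of the parameter $A$ so that the resulting constants come out independent of $A\in(0,1]$. Throughout I would use that $\Phi_{2,A}$ is non-decreasing and continuous, so that $(\Phi_{2,A}(|f|))^*=\Phi_{2,A}(f^*)$ and hence $\int_{[0,1]^d}\Phi_{2,A}(|f(x)|)\,dx=\int_0^1\Phi_{2,A}(f^*(s))\,ds$; I would also use the standard Luxemburg-norm property that $\int_{[0,1]^d}\Phi_{2,A}(|f|/\lambda)\,dx\le 1$ whenever $\lambda\ge\|f\|_{2,A}$ (the endpoint following by continuity).

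For the lower bound $\|f\|_{2,A}^{(1)}\le\|f\|_{2,A}$ I would set $\lambda=\|f\|_{2,A}$ (the cases $\lambda=0,\infty$ being trivial) and start from $\int_0^1\Phi_{2,A}(f^*(s)/\lambda)\,ds\le 1$. Since $f^*$ is non-increasing, restricting the integral to $(0,t)$ and using monotonicity gives $t\,\Phi_{2,A}(f^*(t)/\lambda)\le 1$, i.e. $\Phi_{2,A}(f^*(t)/\lambda)\le 1/t$ for every $0<t<1$. Splitting according to whether $f^*(t)/\lambda\le 1$ or $>1$ and inserting the two branches of \eqref{eq:Orlicz_new1} yields in both cases $\big(f^*(t)/\lambda\big)^2\le A\log(1/t)+1$; taking the supremum over $t$ gives $\|f\|_{2,A}^{(1)}\le\lambda$.

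For the upper bound $\|f\|_{2,A}\le 4\|f\|_{2,A}^{(1)}$ I would write $c=\|f\|_{2,A}^{(1)}$, so that $f^*(s)\le c\sqrt{A\log(1/s)+1}$, and verify that $\lambda=4c$ is admissible, i.e. that $\int_0^1\Phi_{2,A}(f^*(s)/\lambda)\,ds\le 1$. By monotonicity it suffices to bound $\int_0^1\Phi_{2,A}\big(\tfrac14\sqrt{A\log(1/s)+1}\big)\,ds$. I would split the domain at $s_0=e^{-15/A}$, the point where $\tfrac14\sqrt{A\log(1/s)+1}=1$. On $(s_0,1)$ the argument is $\le 1$, so the quadratic branch applies and the contribution is at most $\tfrac1{16}\int_0^1(A\log(1/s)+1)\,ds=\tfrac{A+1}{16}\le\tfrac18$. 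On $(0,s_0)$ the exponential branch applies and a direct computation gives $\Phi_{2,A}\big(\tfrac14\sqrt{A\log(1/s)+1}\big)=s^{-1/16}e^{-15/(16A)}$, whose integral over $(0,s_0)$ is at most $\tfrac{16}{15}e^{-15/16}$. Summing the two contributions gives a number strictly below $1$, which proves admissibility of $\lambda=4c$.

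The step I expect to be delicate is this last bookkeeping: the crucial point is to \emph{retain} the decaying factor $e^{-15/(16A)}\le e^{-15/16}$ coming from the $-1$ in the exponent of $\Phi_{2,A}$, rather than discarding it. Dropping it would leave the bare integral $\int_0^1 s^{-1/16}\,ds=\tfrac{16}{15}>1$ and the estimate would fail, whereas with the factor kept one has ample room below $1$. Everything else — the rearrangement identities and the elementary integrals $\int_0^1\log(1/s)\,ds=1$ and $\int_0^1 s^{-1/16}\,ds=\tfrac{16}{15}$ — is routine, and uniformity in $A$ is automatic since all constants produced are absolute.
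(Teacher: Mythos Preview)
Your proof is correct and follows essentially the same approach as the paper. The only cosmetic difference is in the upper bound: you split the integral exactly at the threshold $s_0=e^{-15/A}$ where the argument of $\Phi_{2,A}$ crosses $1$, whereas the paper uses the uniform crude bound $\Phi_{2,A}(u)\le u^2+\exp((u^2-1)/A)$ on all of $(0,1)$; both routes arrive at the same final numerical estimate $\tfrac{16}{15}e^{-15/16}+\tfrac18<1$.
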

\begin{proof}
Let $\|f\|_{2,A}\le 1$ and let $0<t<1$. If $f^*(t)\le 1$, then also $f^*(t)\le \sqrt{A\log(1/t)+1}$ and there is nothing to prove. If $f^*(t)>1$, then we estimate
\begin{align*}
1\ge \int_{[0,1]^d}\Phi_{2,A}(|f(s)|)ds\ge \int_0^t\Phi_{2,A}(f^*(s))ds\ge t\,\Phi_{2,A}(f^*(t))=t\exp\Bigl(\frac{f^*(t)^2-1}{A}\Bigr).
\end{align*}
By simple algebraic manipulations, it follows that
\[
f^*(t)\le \sqrt{A\log(1/t)+1},\quad 0<t<1.
\]

Let, on the other hand, $\|f\|_{2,A}^{(1)}=c$. Then, putting $\lambda=4c$ we obtain $\frac{f^*(t)^2}{\lambda^2}\leq \frac{A\log(1/t)+1}{16} $ and estimate
\begin{align*}
\int_{[0,1]^d}\Phi_{2,A}\biggl(\frac{|f(x)|}{\lambda}\biggr)dx&=\int_0^1\Phi_{2,A}\biggl(\frac{f^*(t)}{\lambda}\biggr)dt                
\le \int_0^1 \exp\Bigl(\frac{f^*(t)^2/\lambda^2-1}{A}\Bigr)dt+\int_0^1 \frac{f^*(t)^2}{\lambda^2}dt\\
&\le \int_0^1 \exp\Bigl(\frac{\log(1/t)}{16}-\frac{15}{16A}\Bigr)dt 
+ \int_0^1 \frac{A\log(1/t)+1}{16}dt\\
&\le \exp\Bigl(-\frac{15}{16}\Bigr)\int_0^1\exp\Bigl(\frac{\log(1/t)}{16}\Bigr)dt+ \frac{1}{16}\int_0^1\Bigl(\log(1/t)+1\Bigr)dt\\
&=\exp\Bigl(-\frac{15}{16}\Bigr)\cdot\frac{16}{15}+\frac{1}{8}\le 1,
\end{align*}
which implies that $\|f\|_{2,A}\le \lambda=4c.$
\end{proof}

{\bf Acknowledgement:} We would like thank Mikhail Lifshits, Werner Linde, and Winfried Sickel for very helpful discussions.

\thebibliography{99}
\bibitem{Babenko} K.I. Babenko,  {On the approximation of a certain class of periodic functions of several
variables by trigonometric polynomials}, {\em Dokl. Akad. Nauk USSR}, {\bf 132}, 982--985 (1960). 
{\em English transl. in Soviet Math. Dokl.}, 1 (1960).
\bibitem{BenettRud80} C. Bennett and K. Rudnick, {On Lorentz-Zygmund spaces},
 {\em Dissertationes Math.} (Rozprawy Mat.)  {\bf 175}, 67 pp, (1980).
\bibitem{BO11} \'A. B\'enyi and T. Oh,  {Modulation spaces, Wiener amalgam spaces, and Brownian motions}, {\em Adv. Math.} {\bf 228} (5),  2943--2981 (2011).
\bibitem{BL1} D. Bilyk and M. T. Lacey,  {On the small ball inequality in three dimensions}, {\em Duke Math. J.} {\bf 143}  (1), 81--115 (2008).
\bibitem{BLV1} D. Bilyk, M. T. Lacey, and A. Vagharshakyan,  {On the small ball inequality in all dimensions}, {\em J. Funct. Anal.} {\bf 254} (9), 2470--2502 (2008).
\bibitem{BG04} H.-J. Bungartz and M. Griebel,  {Sparse grids}, {\em Acta Numerica} {\bf 13}, 147--269 (2004).  
\bibitem{Byrenheid} G. Byrenheid, {\em Sparse representation of multivariate functions based on discrete point evaluations},
PhD Thesis, Bonn (2018).
\bibitem{Ciesiel91} Z. Ciesielski, {Modulus of smoothness of the Brownian paths in the $L_p$ norm},
{\em Constructive theory of functions} (Varna, Bulgaria, 1991), pages 71--75 (1992).
\bibitem{Ciesiel93} Z. Ciesielski, {Orlicz spaces, spline systems, and Brownian motion}, {\em Constr. Approx.}  {\bf 9}, 191--208 (1993).
\bibitem{CKR93} Z. Ciesielski, G. Kerkyacharian, and B. Roynette,  {Quelques espaces fonctionnels associ\'es \`a des processus gaussiens}, {\em Studia Math.} {\bf 107} (2), 171--204 (1993).
\bibitem{CR81} M. Cs\"org\H{o} and P. R\'ev\'esz, Strong approximations in probability and statistics. Probability
and Mathematical Statistics,  {\em Academic Press}, Inc. [Harcourt Brace Jovanovich, Publishers], New York-London (1981).
\bibitem{dal04} R.C. Dalang,  {Level Sets and Excursions of the Brownian Sheet},  Topics in Spatial Stochastic Processes, {\em Lecture Notes in Mathematics} {\bf 1802}, Springer, Berlin, Heidelberg (2003). %https://doi.org/10.1007/978-3-540-36259-3_5
\bibitem{DomTikh} O. Dom\'\i nguez and S. Tikhonov, {Function spaces of logarithmic smoothness: embeddings and characterizations},
{\em Mem. Amer. Math. Soc.} (to appear). 
\bibitem{Dunker} T. Dunker,  {Estimates for the small ball probabilities of the fractional Brownian sheet}, {\em J. Theoret. Probab.} {\bf 13}  (2), 357--382 (2000). 
\bibitem{DTU18} D. Dung, V. Temlyakov, and T. Ullrich, {\em Hyperbolic cross approximation}, Springer (2018). 
\bibitem{Fab09} G. Faber, {{\"Uber stetige Funktionen}}, {\em Math. Ann.}  {\bf 66}, 81--94, (1909). 
\bibitem{FaLeo} W. Farkas and. H.-G. Leopold, \emph{Characterisations of function spaces of generalised smoothness}, {\em Ann. Mat. Pura Appl.} {\bf 185} (1), 1--62 (2006). 
%\bibitem{FJ90} M. Frazier and B. Jawerth,  {A discrete transform and decompositions of distribution spaces}, {\em J. Funct. Anal.} {\bf  93}  (1), 34--170 (1990). 
\bibitem{Haar10} A. Haar,  {Zur Theorie der orthogonalen Funktionensysteme}, {\em Math. Ann.} {\bf 69}, 331--371 (1910). 
\bibitem{HansenDiss} M. Hansen, {\em Nonlinear approximation and function spaces of dominating mixed smoothness}, PhD Thesis,  Jena (2010). 
\bibitem{Herren} V. Herren, {Lévy-type processes and Besov spaces}, {\em Potential Analysis}, {\bf 7} (3), 689--704 (1997). 
\bibitem{Hummel} F. Hummel,  {Sample paths of white noise in spaces with dominating mixed smoothness},  {\em Banach J. Math. Anal.} {\bf  15} (3), Paper No. 54, 38 pp. (2021)
\bibitem{HV08} T.P. Hyt\"onen and M.C. Veraar, {On Besov regularity of Brownian motions in infinite dimensions}, {\em Probab. Math. Statist.}  {\bf 28}, 143--162 (2008). 
\bibitem{K46} M. Kac,  {On the average of a certain Wiener functional and a related limit theorem in calculus of probability}, {\em Trans. Amer. Math. Soc.} {\bf 59}, 401--414 (1946). 
\bibitem{Kahane} J.P. Kahane,  {\em Some random series of functions}, 2nd ed. Cambridge
 University Press, London (1985).
\bibitem{KaLi} G. A. Kalyabin and P. I. Lizorkin, {Spaces of functions of generalized smoothness}, {\em Math. Nachr.} {\bf 133}, 7--32 (1987).
\bibitem{Kamont2} A. Kamont,  {Isomorphism of some anisotropic Besov and sequence spaces}, {\em Studia Math.} {\bf 110} (2), 169--189 (1994).
\bibitem{Kamont1} A. Kamont,  {On the fractional anisotropic Wiener field}, {\em Probab. Math. Statist.} {\bf 16} (1), 85--98, (1996).  
\bibitem{Karhunen} K. Karhunen,  {\"Uber lineare Methoden in der Wahrscheinlichkeitsrechnung},
{\em Ann. Acad. Sci. Fennicae Ser. A. I. Math.-Phys.} {\bf 37}, 79pp. (1947). 
\bibitem{Khos} D. Khoshnevisan, {\em Multiparameter processes. An introduction to random fields}, Springer Monographs in Mathematics,  Springer-Verlag, New York (2002). 
\bibitem{KhosXiao} D. Khoshnevisan and Y. Xiao,  {Lévy processes: capacity and Hausdorff dimension}, {\em Ann. Probab.} {\bf 33} (3), 841--878 (2005). 
\bibitem{KL02} T. K\"uhn and W. Linde,  {Optimal series representation of fractional Brownian sheets}, {\em Bernoulli} {\bf 8}, 669--696 (2002). 
\bibitem{KL93} J. Kuelbs and W.V. Li,  {Metric entropy and the small ball problem for Gaussian measures}, {\em J. Funct. Anal.} {\bf 116}, 133--157 (1993). 
\bibitem{KL93'} J. Kuelbs and W. Li, {Small ball estimates for Brownian motion and the Brownian sheet}, {\em J. Theoret. Probab.} {\bf 6} (3), 547--577 (1993). 
\bibitem{LM00} B. Laurent and P. Massart,  {Adaptive estimation of a quadratic functional by model selection}, {\em Ann. Statist.} {\bf 28}  (5), 1302--1338 (2000). 
\bibitem{Levy} P. L\'evy, {\em Th\'eorie de l’addition des variables al\'eatoires}. Monographies des Probabilit\'es; calcul des probabilit\'es et ses applications {\bf  1},  Paris (1937).
\bibitem{Levy2} P. L\'evy, {\em Processus stochastiques et mouvement brownien}, Gauthier-Villars \& Cie, Paris (1965).
\bibitem{Linde99} W. Li and W. Linde,  {Approximation, metric entropy and small ball estimates for Gaussian measures}, {\em Ann. Probab.} {\bf 27} (3), 1556--1578 (1999). 
\bibitem{Loeve} M. Lo\`{e}ve, {\em Probability theory II.}, Graduate Texts in Mathematics {\bf 46}, Springer-Verlag, New York-Heidelberg  (1978). 
\bibitem{MP} P. M\"orters and Y. Peres, {\em Brownian motion}, Cambridge Series in Statistical and Probabilistic Mathematics {\bf 30},  Cambridge University Press, Cambridge (2010). 
\bibitem{Moura} S. D. Moura,  {Function spaces of generalized smoothness}, {\em Dissertationes Math.} {\bf 398}, 88 pp. (2001). 
\bibitem{OSK18} M. Ondrej\'at, P. \v{S}imon, and M. Kupsa,  {Support of solutions of stochastic differential equations in exponential Besov-Orlicz spaces}, {\em Stoch. Anal. Appl.} {\bf 36} (6), 1037--1052 (2018). 
\bibitem{OV20} M. Ondrej\'at and M. Veraar,  {On temporal regularity of stochastic convolutions in 2-smooth Banach spaces}, {\em Ann. Inst. Henri Poincar\'e Probab. Stat.} {\bf 56} (3), 1792--1808 (2020). 
\bibitem{Peetre} J. Peetre, New thoughts on Besov spaces, {\em Duke University Mathematics Series} {\bf 1},  Duke University, Durham, N.C.  (1976). 
\bibitem{PiSi} L. Pick and W. Sickel,  {Several types of intermediate Besov-Orlicz spaces}, {\em Math. Nachr.} {\bf 164}, 141--165 (1993). 
\bibitem{R93} B. Roynette,  {Mouvement brownien et espaces de Besov}, {\em Stochastics and Stochastics Rep.} {\bf 43}, 221--260 (1993). 
 \bibitem{SiRu} T. Runst and W. Sickel, {\em Sobolev spaces of fractional order, Nemytskij operators, and nonlinear partial differential equations}, De Gruyter Series in Nonlinear Analysis and Applications {\bf 3},  Walter de Gruyter \& Co., Berlin (1996). 
\bibitem{SchTr87} H.-J. Schmeisser and H. Triebel. {\em Topics in Fourier analysis and function spaces},  John Wiley \& Sons Ltd., Chichester (1987). 
\bibitem{Schilling1} R. L. Schilling,  {On Feller processes with sample paths in Besov spaces}, {\em Math. Ann.} {\bf 309} (4), 663--675 (1997). 
%\bibitem{Schilling2} R. L. Schilling, \emph{Growth and Hölder conditions for the sample paths of Feller processes}, Probab. Theory Relat. Fields 112(4), 565--611 (1998)
\bibitem{Schilling3} R. L. Schilling,  {Function spaces as path spaces of Feller processes}, {\em Math. Nachr.} {\bf 217}, 147--174 (2000). 
\bibitem{SU09} W. Sickel and T. Ullrich,  {Tensor products of Sobolev-Besov spaces and applications to approximation from the hyperbolic cross}, {\em J. Approx. Theor.} {\bf 161}, 748--786  (2009). 
\bibitem{T93} L. Tak\'acs,  {On the distribution of the integral of the absolute value of the Brownian motion}, {\em Ann. Appl. Probab.} {\bf 3}  (1), 186--197  (1993).  
\bibitem{Tal94} M. Talagrand,  {The small ball problem for the Brownian sheet}, {\em Ann. Probab.} {\bf 22}, 1331--1354 (1994). 
\bibitem{Teml93} V.N. Temlyakov, {\em Approximation of periodic functions}, Nova Science Publishes, Inc., New York. (1993). 
\bibitem{Tong} Y. L. Tong, {\em The multivariate normal distribution}, Springer Series in Statistics, Springer-Verlag, New York (1990). 
\bibitem{TrFS1} H. Triebel, {\em Theory of function spaces}, Monographs in Mathematics {\bf 78}, Birkh\"auser Verlag, Basel (1983). 
\bibitem{Triebel10} H. Triebel, {\em Bases in function spaces, sampling, discrepancy, numerical integration}, EMS Tracts in Mathematics, {\bf 11}, European Mathematical Society, Z\"urich (2010).
\bibitem{Triebel12} H. Triebel, \emph{Faber systems and their use in sampling, discrepancy, numerical integration}, European Mathematical Society, Z\"urich (2012).
\bibitem{Triebel19} H. Triebel, \emph{Function spaces with dominating mixed smoothness}, European Mathematical Society, Z\"urich (2019). 
\bibitem{TudorXiao} C. A. Tudor and Y. Xiao,  {Sample path properties of bifractional Brownian motion}, {\em Bernoulli}  {\bf 13} (4), 1023--1052 (2007). 
\bibitem{Veraar} M. C. Veraar,  {Regularity of Gaussian white noise on the $d$-dimensional torus}, In: M. Nawrocki, W. Wnuk (eds.) {\em  Marcinkiewicz Centenary Volume}  {\bf 95},  {  Banach Center Publication}, pp. 385--398. Polish Acad. Sci. Inst. Math., Warsaw (2011). 
\bibitem{Ver} R. Vershynin, {Introduction to the non-asymptotic analysis of random matrices}, In: {\em Compressed sensing,  Cambridge Univ. Press}, Cambridge,  210--268 (2012).
\bibitem{Vyb06} J. Vyb\'\i ral, {Function spaces with dominating mixed smoothness}, {\em Dissertationes Math.} {\bf 436}, 73 pp. (2006). 
\bibitem{Walsh} J.B. Walsh, {An Introduction to Stochastic Partial Differential Equations}, École d’été de probabilités de Saint-Flour XIV-1984, {\em Lecture Notes in Mathematics} {\bf 1180}, Springer-Verlag, New York,  265--439 (1986). 
\bibitem{Wang07} W. Wang,  {Almost-sure path properties of fractional brownian sheet}, {\em Annales de l'I.H.P. Probabilités et statistiques, Tome } {\bf 43} (5), 619--631 (2007). 
\bibitem{Xiao1} Y. Xiao,  {Sample path properties of anisotropic Gaussian random fields, A minicourse on stochastic partial differential equations}, 145--212,
{\em Lecture Notes in Math.} {\bf 1962}, Springer, Berlin (2009). 

\end{document}